    \newcommand{\R}{{\mathbb R}}
    \newcommand{\Rn}{{\mathbb R}^n}
    \newcommand{\N}{{\mathbb N}}
    \newcommand{\Q}{\mathbb{Q}}
    \newcommand{\Tensor}{\mathbb{T}}
    \newcommand{\BE}{\mathbb{E}}    
	\newcommand{\Prob}{\mathbb{P}}
    \newcommand{\cE}{\mathcal{E}}
    \newcommand{\cL}{\mathcal{L}}
    \newcommand{\K}{{\mathcal K}}   
    \newcommand{\B}{\mathcal{B}}
    \newcommand{\Haus}{\mathcal{H}}
    \newcommand{\A}{\mathcal{A}}
    \newcommand{\Lp}{L^{\perp}}
\newcommand{\cLo}{\mathcal{L}_1^n}
\newcommand{\En}{\mathcal{E}^n}
\newcommand{\Eorigo}{\pi(E)}
\newcommand{\Horigo}{\pi (H)}
\newcommand{\1}{\textbf{1}}
\newcommand\IP[2]{\langle {#1} , {#2} \rangle}
\newcommand{\ch}{\hat{\varphi}_{ij}(K \cap E)}
\newcommand{\norm}{\|}
    \newtheorem{proposition}{Proposition}[section]
    \newtheorem{theorem}[proposition]{Theorem}
    \newtheorem{corollary}[proposition]{Corollary}
    \newtheorem{lemma}[proposition]{Lemma}
    \newtheorem{example}[proposition]{Example}
    \newtheorem{definition}[proposition]{Definition}
    \newenvironment{proofof}[1][\proofname]{%
  \proof[Proof of #1]%
}{\endproof}
\title{Surface tensor estimation from linear sections}
\author[1]{Astrid Kousholt\footnote{E-mail: kousholt@imf.au.dk}}
\author[1]{Markus Kiderlen}
\author[2]{Daniel Hug}
\affil[1]{Department of Mathematics, Aarhus University, Denmark}
\affil[2]{Department of Mathematics, Karlsruhe Institute of Technology (KIT), Germany}
\date{}
\begin{document}

\maketitle

\begin{abstract} 
From Crofton's formula for Minkowski tensors we derive stereological estimators of translation invariant surface tensors of convex bodies in the $n$-dimensional Euclidean space. The estimators are based on one-dimensional linear sections. In a design based setting we suggest three types of estimators. These are based on isotropic uniform random lines, vertical sections, and non-isotropic random lines, respectively. Further, we derive estimators of the specific surface tensors associated with a stationary process of convex particles in the model based setting.
\end{abstract}
\vspace{2mm}
\noindent \textbf{Keywords} Crofton formula, Minkowski tensor, stereology, isotropic random line, anisotropic random line, vertical section estimator, minimal variance estimator, stationary particle process, stereological estimator
\\
\\
\noindent \textbf{MSC2010} 60D05, 52A22, 53C65, 62G05, 60G55

\section{Introduction}
In recent years, there has been an increasing interest in  Minkowski tensors as descriptors of morphology and shape of spatial structures of physical systems. 
For instance, they have been established as robust and versatile measures of anisotropy in \cite{Beisbart2002,SM10,Schroder-Turk2013}. In addition to the applications in materials science, \cite{Beisbart} indicates that the Minkowski tensors lead to a putative taxonomy of neuronal cells. From a pure theoretical point of view, Minkowski tensors are, likewise, interesting. This is illustrated by Alesker's characterization theorem  \cite{Alesker1999}, stating that the basic tensor valuations (products of the Minkowski tensors and powers of the metric tensor) span the space of tensor-valued valuations satisfying some natural conditions. 

This paper presents estimators of certain Minkowski tensors from measurements in one-dimensional flat sections of the underlying geometric structure. We restrict attention to translation invariant Minkowski tensors of convex bodies, more precisely, to those that are derived from the top order surface area measure; see Section \ref{prelim} for a definition. 
As usual, the estimators are derived from an integral formula, namely the Crofton formula for 
Minkowski tensors. We adopt the classical setting where the sectioning space is affine and integrated with respect to the motion invariant measure. Rotational Crofton formulae where the sectioning space is a linear subspace and the rotation invariant measure on the corresponding Grassmannian is used, are established in \cite{A-C2013}. The latter  formulae were the basis for local stereological estimators of certain Minkowski tensors
 in \cite{Jensen2013} (for $j \in \{1, \dots, n-1\}, s,r \in \{0,1\}$ and $j=n, s=0, r \in \N$ in the notation of \eqref{volumeTensor} and \eqref{Mtensor}, below).

Kanatani \cite{Kanatani1984,Kanatani1984a} was apparently the first to use tensorial quantities to detect and analyse structural anisotropy via basic stereological principles. He expresses the expected number $N(m)$ of intersections per unit length of a probe with a test line of given direction $m$ as the cosine transform of the spherical distribution density $f$ of the surface of the given probe in $\R^n$ for $n=2,3$. The relation between $N$ and $f$ is studied by expanding $f$ into spherical harmonics and by using the fact that these are eigenfunctions of the cosine transform. In order to express his results  independently of a particular coordinate system,  Kanatani uses tensors. For a fixed $s$, he considers the vector space $V_s$ of all symmetric tensors spanned by the elementary tensor products $u^{\otimes s}$ of vectors $u$  from the unit sphere $S^{n-1}$. Let $\hat T$ denote the deviator part (or trace-free part) of some symmetric tensor $T$. The tensors  $\widehat{(u^{\otimes k})}$, for $k\le s$ and $u\in S^{n-1}$, then span $V_s$ and  the components of $\widehat{(u^{\otimes k})}$ with respect to an orthonormal basis of $\R^n$ are spherical harmonics of degree $k$, when considered as functions of $u$. Hence, $u\mapsto \widehat{(u^{\otimes k})}$ is an eigenfunction of the cosine transform (Kanatani calls it `Buffon transform'), which in fact is the underlying integral transform when considering Crofton integrals with lines, as we shall see below in \eqref{eq1}.  In \cite{Kanatani1984b,Kanatani1985}, he suggests to use these `fabric tensors' to detect surface motions and the anisotropy of the crack distribution in rock.

General Crofton formulas in $\R^n$ with arbitrary dimensional flats and for general Minkowski tensors (defined in \eqref{Mtensor}) of arbitrary rank are given in \cite{Hug}. Theorem \ref{thm} is a special case of one of these results, for translation invariant surface tensors and one-dimensional sections, that is, sections with lines. 
In comparison to \cite{Hug}, we get simplified  constants in the case considered and obtain this result by an elementary independent proof. In contrast to Kanatani's approach, our proof does not rely on spherical harmonics. Here we focus on relative Crofton formulas in which the Minkowski tensors of the  
sections with lines are calculated relative to the section lines and not in the ambient space (Crofton formulas of the second type may be called extrinsic Crofton formulas). A quite general investigation of integral geometric formulas for translation invariant Minkowski tensors, including extrinsic Crofton formulas, is provided in \cite{BernigHug}.

In Theorem \ref{thm} we prove that the relative Crofton integral for tensors of arbitrary even rank $s$ of sections with lines is equal to a linear combination of surface tensors of rank at most $s$. From this we deduce by the inversion of a linear system that any translation invariant surface tensor of even rank $s$ can be expressed as a Crofton integral. The involved measurement functions then are linear combinations of relative tensors of rank at most $s$. This implies that the measurement functions only depend on the convex body through the Euler characteristic of the intersection of the convex body and the test line. 

Our results do not allow to write surface tensors of odd rank as Crofton integrals based on sections with lines. This drawback is not a result of our method of proof. Indeed, apart from the trivial case of tensors of rank one, there does not exist a translation invariant or a bounded  measurement function that expresses a surface tensor of odd rank as a Crofton integral; see Theorem \ref{s ulige} for a precise statement of this fact.

In Section \ref{sec estimation} the integral formula for surface tensors of even rank is transferred to stereological formulae in a design based setting. Three types of unbiased estimators are discussed. Section \ref{IUR} describes an estimator based on isotropic uniform random lines. Due to the structure of the measurement function, it suffices to observe whether the test line hits or misses the convex body in order to estimate the surface tensors. However, the resulting estimators possess some unfortunate statistical properties. In contrast to the surface tensors of full dimensional convex bodies, the estimators are not positive definite. For convex bodies, which are not too eccentric (see \eqref{posdefcondition}), this problem is solved by using $n$ orthogonal test lines in combination with a measurement of the projection function of order $n-1$ of the convex body.   

In applications it might be inconvenient or even impossible to construct the isotropic uniform random lines, which are necessary for the use of the estimator described above. Instead, it might be a possibility to use vertical sections; see Definition \ref{VUR}. A combination of Crofton's formula and a result of  Blaschke-Petkantschin type allows us to formulate a vertical section estimator. The estimator, which is discussed in Section \ref{sec VUR}, is based on two-dimensional vertical flats. 
 
The third type of estimator presented in the design based setting is based on non-isotropic linear sections; see Section \ref{Sec noniso}. For a fixed convex body in $\R^2$ there exists a density for the distribution of test line directions in an importance-sampling approach that leads to minimal variance of the non-isotropic estimator, when we consider one component of a rank 2 tensor, interpreted as a matrix. In practical applications, this density is not accessible, as it depends on the convex body, which is typically unknown. However, there does exist a density independent of the underlying convex body yielding an estimator with smaller variance than the estimator based on isotropic uniform random lines. If \emph{all} components of the tensor are sought for, the non-isotropic approach requires three test lines, as two of the four components of a rank 2 Minkowski tensor coincide due to symmetry. It should be avoided to use a density suited for estimating one particular component of the tensor to estimate any other component, as this would increase variance of the estimator. In this situation, however, a smaller variance can be obtained by applying an estimator based on \emph{three} isotropic random lines (each of which can be used for the estimation of \emph{all} components of the tensor).  

In Section \ref{SecModel} we turn to a model-based setting. We discuss estimation of the \textit{specific (translation invariant) surface tensors} associated with a stationary process of convex particles; see \eqref{defmeansurface} for a definition. In \cite{RSRS06} the problem of estimating the area moment tensor (rank $2$) associated with a stationary process of convex particles via planar sections is discussed. We consider estimators of the specific surface tensors of arbitrary even rank based on one-dimensional linear sections. Using the Crofton formula for surface tensors, we derive a rotational Crofton formula for the specific surface tensors. Further, the specific surface tensor of rank $s$ of a stationary process of convex particles is expressed as a rotational average of a linear combination of specific tensors of rank at most $s$ of the sectioned process.

\section{Preliminaries} \label{prelim}
We work in the $n$-dimensional Euclidean vector space $\Rn$ with inner product $ \IP{\cdot}{\cdot} $ and induced norm $ \norm \cdot \norm $. Let $B^n:=\{x \in \Rn \mid \norm x \norm \leq 1\}$ be the unit ball and $ S^{n-1}:=\{x \in \Rn \mid \norm x \norm =1\} $  the unit sphere in $\Rn$. By $\kappa_n$ and $\omega_{n}$ we denote the volume and the surface area of $ B^n $, respectively. The Borel $\sigma$-algebra of a topological space $X$ is denoted by $\B(X)$.  Further, let $\lambda$ denote the $n$-dimensional Lebesgue measure on $\R^n$, and for an affine subspace $E$ of $\R^n$, let $\lambda_E$ denote the Lebesgue measure defined on $E$. The $k$-dimensional Hausdorff measure is denoted by $\Haus^k$. For $A \subseteq \R^n$, let  $\dim A$  be the dimension of the affine hull of $A$.  

Let $\Tensor^p$ be the vector space of symmetric tensors of rank $p$ over $\R^n.$ For symmetric tensors $a\in \Tensor^{p_1}$ and $b\in \Tensor^{p_2}$, let $ab\in \Tensor^{p_1+p_2}$ denote the symmetric tensor product of $a$ and $b$. Identifying $x \in \R^n$ with the rank 1 tensor $z \mapsto \IP{z}{x}$, we write  $x^p \in \Tensor^p$ for the $p$-fold symmetric tensor product of $x$. The metric tensor $Q \in \Tensor^2$ is defined by $Q(x,y)=\IP{x}{y}$ for $x,y \in \R^n$, and for a linear subspace $L$ of $\R^n$, we define $Q(L) \in \Tensor^2$ by $Q(L)(x,y)=\IP{p_L(x)}{p_L(y)}$, where $p_L\colon \R^n \rightarrow L$ is the orthogonal projection on $L$.

As general references on convex geometry and Minkowski tensors, we use \cite{Schneider93} and \cite{Hug}. Let $\K^n$ denote the set of convex bodies (that is,  compact, convex sets) in $\R^n$.  In order to define the Minkowski tensors, we introduce the support measures $\Lambda_0(K, \cdot), \dots, \Lambda_{n-1}(K, \cdot)$ of a non-empty, convex body $ K \in\K^n$. Let $ p(K,x) $ be the metric projection of $x \in \Rn$ on a non-empty convex body $ K $, and define $ u(K,x):=\frac{x-p(K,x)}{\norm x-p(K,x)\norm} $ for $x \notin K$. For $ \epsilon >0 $ and a Borel set $\nobreak{A \in \B(\R^n \times S^{n-1}})$, the Lebesgue measure of the local parallel set
\begin{equation*}
M_\epsilon(K,A):=\{x \in (K+\epsilon B^n) \setminus K \mid (p(K,x),u(K,x)) \in A \}
\end{equation*}
of $K$ is a polynomial in $\epsilon$, hence
\begin{equation*}
\lambda(M_\epsilon(K,A))=\sum_{k=0}^{n-1}\epsilon^{n-k}\kappa_{n-k}\Lambda_k(K,A).
\end{equation*}
This local version of the Steiner formula defines the support measures $ \Lambda_0(K, \cdot), \allowbreak \dots, \Lambda_{n-1}(K,\cdot) $ of a non-empty convex body $ K\in\K^n$. If $K=\emptyset$, we define the support measures to be the zero measures. The intrinsic volumes $ V_0(K), \dots, V_{n-1}(K)$ of $ K $ appear as total masses of the support measures, $V_j(K)=\Lambda_j(K,\Rn\times S^{n-1})$ for $j=0, \dots, n-1$. Furthermore, the area measures $ S_0(K, \cdot), \dots, S_{n-1}(K,\cdot) $ of $ K $ are rescaled projections of the corresponding support measures on the second component. More explicitly, they are given by 
\begin{equation*}
\binom{n}{j}S_j(K,\omega)=n \kappa_{n-j}\Lambda_j(K,\Rn \times \omega)
\end{equation*}
for $ \omega \in \B(S^{n-1}) $ and $j=0, \dots, n-1$. 

For a non-empty convex body $K\in\K^n$, $r,s \in \N_0$, and $ j \in \{0,1,\dots, n-1\} $, we define the \textit{Minkowski tensors} as
\begin{equation}\label{Mtensor}
\Phi_{j,r,s}(K) :=\frac{\omega_{n-j}}{r!s! \omega_{n-j+s}} \int_{\R^n \times S^{n-1}} x^r u^s\, \Lambda_j (K,d(x,u))
\end{equation}
and
\begin{equation} \label{volumeTensor}
\Phi_{n,r,0}(K):=\frac{1}{r!}\int_{K}x^r \,\lambda(dx).
\end{equation}
The definition of the Minkowski tensors is extended by letting $\Phi_{j,r,s}(K)=0$, if $j \notin\{0,1,\dots, n\}$, or if $r$ or $s$ is not in $\N_0$, or if $j=n$ and $s \neq 0$. For $j=n-1$, the tensors \eqref{Mtensor} are called surface tensors. In the present work, we only consider translation invariant surface tensors which are obtained for $r=0$. In \cite{Hug} the functions $Q^m\Phi_{j,r,s}$ with $m,r,s \in \N_0$ and either $ j \in \{0,\dots, n-1\} $ or $ (j,s)=(n,0) $ are called the basic tensor valuations. 

For $k \in \{1, \dots,n \}$, let $\cL^n_k$ be the set of $k$-dimensional linear subspaces of $\R^n$, and let $\En_k$ be the set of $ k $-dimensional affine subspaces of $\R^n$. For $L \in \cL^n_k$, we write $L^\perp$ for the orthogonal complement of $L$. For $E \in \En_k$,  let $\Eorigo$ denote the linear subspace in $\cL^n_k$ which is parallel to $E$, and we define $ E^\perp :=\Eorigo^\perp$. The sets $\cL^n_k$ and $\cE^n_k$ are endowed with their usual topologies and Borel $\sigma$-algebras. Let $\nu^n_k$ denote the unique rotation invariant probability measure on $\cL^n_k$, and let $\mu_k^n$ denote the unique motion invariant measure on $\En_k$ normalized so that $\nobreak{\mu^n_k(\{E \in \En_k \vert E \cap B^n \neq \emptyset\})}=\kappa_{n-k}$ (see, e.g.,  \cite{Weil}).

If $K \in \K^n$ is non-empty and contained in an affine subspace $E \in \En_k$,  for some $k \in \{1, \dots, n\}$, then the Minkowski tensors can be evaluated in this subspace. For a linear subspace $ L \in \cL_k^n $, let $ \pi_L \colon S^{n-1} \setminus L^\perp \rightarrow  L \cap S^{n-1}$ be given by 
\begin{equation*}
\pi_L(u):=\frac{p_L(u)}{\norm p_L(u)\norm}.
\end{equation*}
Then we define the $ j $th support 
measure $ \Lambda_j^{(E)}(K, \cdot) $  of $ K $ relative to $ E $ as the image measure of the restriction of $\Lambda_j(K, \cdot)$  to $ \Rn \times (S^{n-1} \setminus E^\perp) $ under the mapping $ \Rn \times (S^{n-1} \setminus E^\perp) \rightarrow \Rn \times (\Eorigo \cap S^{n-1})  $ given by $(x,u) \mapsto (x,\pi_{\Eorigo}(u))$.  

For a non-empty convex body $K \in \K^n$, contained in an affine subspace $ E \in \En_k $,  for some $k \in \{1, \dots, n\}$, we define
\begin{equation*}
\Phi_{j,r,s}^{(E)}(K) :=\frac{\omega_{k-j}}{r!s! \omega_{k-j+s}} \int_{E \times (S^{n-1} \cap \Eorigo) } x^r u^s \, \Lambda^{(E)}_j (K,d(x,u))
\end{equation*}
for $ r,s \in \N_0 $ and $ j \in \{0,\dots, k-1\} $, and
\begin{equation*}
\Phi_{k,r,0}^{(E)}(K):=\frac{1}{r!}\int_{K} x^r \,
 \lambda_E(dx).
\end{equation*}
As before, the definition is extended by letting $\Phi^{(E)}_{j,r,s}(K)=0$ for all other choices of $j,r$ and $s$, and for $K=\emptyset$.

In \cite{Hug},  Crofton integrals of the form
\begin{equation*}
\int_{\En_k} \Phi_{j,r,s}^{(E)}(K \cap E) \, \mu_k^n (dE),
\end{equation*}
where $K \in \K^n$, $r,s \in \N_0$ and $0 \leq j \leq k \leq n-1$,  are expressed as linear 
combinations of the basic tensor valuations. When $j=k$ the integral formula becomes
\begin{equation}\label{j=k}
\int_{\En_k} \Phi_{k,r,s}^{(E)}(K \cap E) \, \mu_k^n (dE)=
\begin{cases}
\Phi_{n,r,0}(K) & \text{if } s=0, \\
0 & \text{otherwise}, 
\end{cases}
\end{equation}
see \cite[Theorem 2.4]{Hug}. In the case where $j < k$, the formulas become lengthy with coefficients in the linear combinations that are difficult to evaluate, see \cite[Theorem 2.5 and 2.6]{Hug}. In the following, we are interested in using the integral formulas for the estimation of the surface tensors, and therefore we need more explicit integral formulas. We only treat the special case where $k=1$, that is, we consider integrals of the form
\begin{equation*}
\int_{\En_1} \Phi_{j,r,s}^{(E)}(K \cap E) \, \mu_1^n (dE).
\end{equation*}
Since $\dim(E)=1$, the tensor $\Phi_{j,r,s}^{(E)}(K)$ is by definition the zero function when $j>1$, so the only non-trivial cases are $j=0$ and $j=1$. When $j=1$ formula \eqref{j=k} gives a simple expression for the integral. In the case where $j=0$ and $r=0$, we provide an independent and elementary proof of the integral formula, which also leads to explicit and fairly simple constants.

\section{Linear Crofton formulae for tensors} \label{CroftonSec}

We start with the main result of this section, which provides a linear Crofton formula relating an average of tensor valuations defined relative to varying  section lines to a linear combination of surface tensors.

\begin{theorem} \label{crofton-like}
Let $K \in \K^n$. If $s \in \N_0$ is even, then
\begin{equation}\label{thm}
\int_{\mathcal{E}^n_1}\Phi^{(E)}_{0,0,s}(K\cap E)\, \mu_1^n(dE) = \frac{2 \omega_{n+s+1}}{ \pi s!\omega_{s+1}^2\omega_n} \sum_{k=0}^{\frac{s}{2}} c_{k}^{(\frac{s}{2})} Q^{\frac{s}{2}-k}  \Phi_{n-1,0,2k}(K),
\end{equation}
with constants
\begin{equation}
c_{k}^{(m)}=(-1)^{k} \binom{m}{k} \frac{(2k)! \, \omega_{2k+1}}{1-2k} 
\end{equation}
for $m \in \N_0$ and $k=0, \dots, m$.

For odd $s \in \N_0$ the Crofton integral on the left-hand side is zero.
\end{theorem}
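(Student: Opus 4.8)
The plan is to reduce the Crofton integral to an integral over directions on the sphere and then identify the resulting integral transform. First I would use the motion invariance of $\mu_1^n$ to disintegrate the integral over $\mathcal{E}^n_1$ into an integral over directions $u \in S^{n-1}$ (with respect to the rotation invariant measure, equivalently the line $L=\mathrm{span}\{u\}$ through the origin) and an inner integral over translates $L+x$, $x \in L^\perp$. For a fixed line $E = L+x$, the set $K \cap E$ is either empty or a segment, so the relative support measure $\Lambda_0^{(E)}(K\cap E,\cdot)$ is concentrated on the two endpoints of the segment with the two unit directions $\pm v$, where $v$ spans $L$; hence $\Phi^{(E)}_{0,0,s}(K\cap E)$ is a constant multiple of $\big(v^s + (-v)^s\big)\mathbf{1}\{K \cap E \neq \emptyset\}$, which for even $s$ equals $2v^s$ times a normalizing constant and for odd $s$ vanishes — this immediately gives the last sentence of the theorem. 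The inner integral over $x \in L^\perp$ of the indicator $\mathbf{1}\{K \cap (L+x)\neq\emptyset\}$ is just the $(n-1)$-dimensional volume of the projection $p_{L^\perp}(K)$.

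Next I would rewrite that projection volume as a surface integral: by Cauchy's projection formula (or directly from the definition of the area measure $S_{n-1}(K,\cdot)$), $\mathcal{H}^{n-1}(p_{L^\perp}K) = \tfrac{1}{2}\int_{S^{n-1}} |\langle u, w\rangle|\, S_{n-1}(K,dw)$ where $u$ spans $L$. Substituting this back, the whole left-hand side becomes a constant times $\int_{S^{n-1}} \Big(\int_{S^{n-1}} u^s\, |\langle u,w\rangle|\, \nu(du)\Big)\, S_{n-1}(K,dw)$, i.e. the cosine (Buffon) transform applied to the $\Tensor^s$-valued "density" $u \mapsto u^s$. So the crux is to evaluate the fixed tensor-valued integral $T(w) := \int_{S^{n-1}} u^s |\langle u,w\rangle|\, \nu(du)$ and show it equals a linear combination $\sum_{k=0}^{s/2} (\text{const}) \, Q^{s/2-k} w^{2k}$; then integrating against $S_{n-1}(K,\cdot)$ produces $\sum_k (\text{const})\, Q^{s/2-k}\int_{S^{n-1}} w^{2k}\, S_{n-1}(K,dw)$, which up to the normalization in \eqref{Mtensor} is exactly $\sum_k (\text{const}) \, Q^{s/2-k}\Phi_{n-1,0,2k}(K)$.

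To compute $T(w)$, I would use $SO(n)$-equivariance: $T$ is a symmetric-tensor-valued function on $S^{n-1}$ that commutes with rotations fixing $w$, so it must be a polynomial in $Q$ and $w^2$ of the right degree, $T(w) = \sum_{k=0}^{s/2} a_k\, Q^{s/2-k} w^{2k}$ with scalar coefficients $a_k$ independent of $K$. To pin down the $a_k$, I would evaluate both sides at a convenient $w$, say $w=e_1$, and compare the "diagonal" components $T(e_1)(e_1,\dots,e_1)$ and more generally $T(e_1)(e_1^{2j}e_2^{s-2j})$-type contractions; each such contraction of the left side is a one-dimensional moment integral of the form $\int_{-1}^1 t^{2k}|t|(1-t^2)^{\cdots}\,dt$ after integrating out the sphere $S^{n-2}$, which evaluates to Beta-function / Gamma-function expressions. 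Matching these against the corresponding contractions of $\sum_k a_k Q^{s/2-k}e_1^{2k}$ (which involve only combinatorial factors and powers) yields a triangular linear system for the $a_k$ that I would solve, expecting the binomial and $\tfrac{1}{1-2k}$ factors together with the $\omega$'s to emerge precisely as in the stated $c_k^{(m)}$. Bookkeeping of all the normalizing constants ($\omega_{n-j}$ factors in $\Phi^{(E)}$ versus $\Phi_{n-1,0,s}$, the $r!s!$ denominators, the constant in Cauchy's formula, and the surface-measure-to-area-measure conversion) is the main obstacle: the geometric and structural content is straightforward, but making the scalar constants collapse to the clean closed form $\tfrac{2\omega_{n+s+1}}{\pi s!\omega_{s+1}^2\omega_n}$ and $c_k^{(m)}$ requires careful and somewhat lengthy manipulation of Gamma functions. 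An alternative to the direct moment computation, which I would keep in reserve, is to recognize $u\mapsto \widehat{u^{\otimes k}}$ as eigenfunctions of the cosine transform (as recalled in the introduction) and read off the $a_k$ from the known eigenvalues; but since the paper advertises a proof avoiding spherical harmonics, I would push through the elementary moment approach instead.
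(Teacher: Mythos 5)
Your proposal is correct in outline and follows the same overall architecture as the paper's proof: disintegrate the motion-invariant measure into a rotation part and a translation part, observe that $\Phi^{(E)}_{0,0,s}(K\cap E)$ collapses to (a constant times) $Q(\pi(E))^{s/2}\,V_0(K\cap E)$ for even $s$ and vanishes for odd $s$, pass to the projection function via the inner translation integral, apply Cauchy's projection formula to bring in the area measure $S_{n-1}(K,\cdot)$, and reduce the problem to evaluating the tensor-valued cosine transform $T(w)=\int_{S^{n-1}} u^s\,|\langle u,w\rangle|\,\mathcal{H}^{n-1}(du)$.

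Where you diverge from the paper is in the evaluation of $T(w)$. The paper computes $T(w)$ head-on: it writes $u=tw+\sqrt{1-t^2}\,w'$ in spherical coordinates centered at $w$, expands $u^s$ by the binomial theorem in the symmetric tensor algebra, reduces the $t$-integral to Beta functions and the $w'$-integral to \eqref{metric}, substitutes $Q(w^\perp)=Q-w^2$, and is left with a genuinely two-index sum that is then collapsed to a single sum using the binomial identity in Lemma \ref{binomial} (proved by induction). You propose instead a structural shortcut: invoke equivariance under the stabilizer of $w$ in $SO(n)$ to conclude that $T(w)$ must a priori have the form $\sum_{k=0}^{s/2} a_k\,Q^{s/2-k}w^{2k}$, and then determine the scalar coefficients by evaluating finitely many contractions, each of which is a one-dimensional Beta integral. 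Both are legitimate and lead to similar Gamma-function bookkeeping; your version avoids the double sum and the need for Lemma \ref{binomial} at the cost of a representation-theoretic observation and the verification that the resulting linear system for the $a_k$ is in fact invertible. Two small points you left out: the case $n=1$ (where $S^0$ is a two-point set and the spherical-coordinate step degenerates; the paper disposes of it separately via a scalar binomial identity), and the precise normalization of the cosine transform, namely $V_{n-1}(K\mid u^\perp)=\frac12\int_{S^{n-1}}|\langle u,v\rangle|\,S_{n-1}(K,dv)$, which you state correctly but which is where several of the $\omega$-factors originate.
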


Before we give a proof of Theorem $\ref{crofton-like}$, let us consider the measurement function $\MT[s]$ on the left-hand side of \eqref{thm}. Let $k \in \{1, \dots, n\}$. Slightly more general than in \eqref{thm}, we choose $s \in \N_0$ and  $E \in \cE_k^n$. Then 
\begin{equation*}
\Phi^{(E)}_{0,0,s}(K\cap E)
=\frac{1}{s!\omega_{k+s}}\int_{S^{n-1}\cap \Eorigo}u^s\, \Haus^{k-1}(du)\, V_0(K\cap E),
\end{equation*}
since the surface area measure of order 0 of a non-empty set is up to a constant the invariant measure on the sphere. From calculations equivalent to \cite[(24)-(26)]{TVCB} (or from a special case of Lemma 4.3 in \cite{Hug}) we get that 
\begin{equation}\label{metric}
\int_{S^{n-1}\cap \Eorigo}u^s\, \mathcal{H}^{k-1}(du)
=\begin{cases}
2\frac{\omega_{s+k}}{\omega_{s+1}}Q(\Eorigo)^{\frac{s}{2}} & \text{if $s$ is even}, \\
0      & \text{if $s$ is odd}.
\end{cases}
\end{equation}
Hence 
\begin{equation}\label{Simpel form for T}
\Phi^{(E)}_{0,0,s}(K\cap E)
=\frac{2}{s! \omega_{s+1}}\cdot Q(\Eorigo)^{\frac{s}{2}}V_0(K\cap E), 
\end{equation}
when $s$ is even, and $\Phi^{(E)}_{0,0,s}(K\cap E)=0$ when $s$ is odd. This implies that the Crofton integral in \eqref{thm} is zero for odd $s$, and the tensors $\T[s]$ are hereby not accessible in this situation. This is even true for more general measurement functions; see Theorem \ref{s ulige}. To show Theorem \ref{crofton-like} we can restrict to even $s$ from now on.

In the proof of Theorem \ref{crofton-like} we use the following identity for binomial sums.

\begin{lemma}\label{binomial}
Let $m,n \in \N_0$. Then
\begin{equation*}
\sum_{j=0}^m(-1)^j \frac{\binom{2n}{2j}\binom{n-j}{m-j}}{\binom{n-\frac{1}{2}}{j}}
=
\frac{\binom{n}{m}}{1-2m}.
\end{equation*}
\end{lemma}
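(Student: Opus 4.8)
The plan is to prove Lemma~\ref{binomial} by a generating function argument, treating $n$ as the primary parameter and $m$ as an index to be extracted. First I would rewrite the summand in a form that exhibits its hypergeometric structure: using $\binom{2n}{2j} = \frac{(2n)!}{(2j)!(2n-2j)!}$ and the duplication formula to convert $\binom{n-\frac12}{j}$ into a ratio of factorials (equivalently, into a Pochhammer symbol $(\frac12)_?$ or a ratio of central binomial coefficients), the quotient $\binom{2n}{2j}/\binom{n-1/2}{j}$ should collapse to something like $\binom{n}{j}$ times an elementary factor, because $\binom{n-1/2}{j}$ and $\binom{2n}{2j}$ are both "square-root-type" objects. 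Concretely I expect to reach an identity of the shape
\begin{equation*}
\sum_{j=0}^m (-1)^j \binom{n}{j}\binom{n-j}{m-j} \cdot (\text{elementary factor in } j) = \frac{\binom{n}{m}}{1-2m},
\end{equation*}
and then simplify further using $\binom{n}{j}\binom{n-j}{m-j} = \binom{n}{m}\binom{m}{j}$, which pulls the target constant $\binom{n}{m}$ out of the sum entirely.

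Once $\binom{n}{m}$ is factored out, the claim reduces to a closed form for $\sum_{j=0}^m (-1)^j \binom{m}{j} g(j)$ where $g$ is whatever elementary factor survived — I anticipate $g(j)$ being essentially $\frac{1}{1-2j}$ or a close variant, so that the finite difference $\sum_j (-1)^j \binom{m}{j} g(j)$ is the $m$-th forward difference of $g$ evaluated at $0$. The cleanest route here is to recognize $\frac{1}{1-2j}$ as $\int_0^1 t^{-2j}\,dt$ formally, or better, to use the beta-integral representation $\frac{1}{1-2j} = -\frac12 B(\cdot)$-type expression, so that $\sum_j (-1)^j\binom{m}{j}\frac{1}{1-2j}$ becomes $\int_0^1 (1-t^{2})^m$-type integral divided by an appropriate power, which evaluates to $\frac{1}{1-2m}$ by a standard beta-function computation (or by induction on $m$). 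Alternatively, one can verify $\sum_{j=0}^m (-1)^j \binom{m}{j}\frac{1}{1-2j} = \frac{1}{1-2m}$ directly by the Gosper/Zeilberger-style check that both sides satisfy the same first-order recurrence in $m$ and agree at $m=0$; I would present whichever is shortest.

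The main obstacle I foresee is the bookkeeping in the first step: showing that the ratio $\binom{2n}{2j}\big/\binom{n-1/2}{j}$ genuinely simplifies to $\binom{n}{j}$ times a $j$-independent-up-to-sign factor (or $\binom nj \cdot 4^{-j}\binom{2j}{j}$, or similar). This hinges on the identity $\binom{n-1/2}{j} = \binom{2n}{2j}\binom{2j}{j}\big/\big(4^j\binom nj\big)$ — a consequence of the Legendre duplication formula — and getting the powers of $4$ and the central binomials to cancel correctly against whatever the surviving elementary factor demands. Everything downstream (factoring $\binom nm$, computing a finite difference or beta integral) is routine. As a safety net, since both sides are rational functions of $n$ for each fixed $m$ and polynomial identities of bounded degree are determined by finitely many values, one could also simply fix small $m$, check the identity as a polynomial identity in $n$ by comparing degrees and leading coefficients, and then invoke an induction on $m$ using the Pascal recurrence $\binom{n-j}{m-j} = \binom{n-1-j}{m-1-j} + \binom{n-1-j}{m-j}$ to pass from $m-1$ to $m$; I would include this inductive argument as the backbone if the generating-function simplification turns out messier than expected.
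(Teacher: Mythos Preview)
Your plan is sound and genuinely different from the paper's argument. The paper does not simplify the summand at all; instead it states a closed form for the \emph{partial} sum
\[
\sum_{j=0}^k (-1)^j \frac{\binom{2n}{2j}\binom{n-j}{m-j}}{\binom{n-\frac12}{j}}
=\frac{(-1)^k(2k+1)\binom{2n}{2(k+1)}\binom{n-k-1}{m-k-1}}{(2m-1)\binom{n-\frac12}{k+1}}-\frac{\binom{n}{m}}{2m-1},
\qquad k<m,
\]
proves it by induction on $k$, and then reads off the lemma at $k=m-1$ (the extra $j=m$ term cancels the first summand on the right). Your route --- simplify the ratio via the duplication formula, then use $\binom{n}{j}\binom{n-j}{m-j}=\binom{n}{m}\binom{m}{j}$ to factor out $\binom{n}{m}$ --- is cleaner conceptually because it separates the $n$–dependence from the $m$–dependence and reduces everything to a one-variable sum; the paper's partial-sum formula, on the other hand, requires guessing the right closed form but avoids any special-function manipulations.

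One concrete correction to your outline: the duplication identity you quote, $\binom{n-\frac12}{j}=\binom{2n}{2j}\binom{2j}{j}\big/\big(4^j\binom{n}{j}\big)$, is correct, and with it the summand becomes
\[
(-1)^j\,\frac{4^j\binom{n}{j}}{\binom{2j}{j}}\binom{n-j}{m-j}
=(-1)^j\binom{n}{m}\binom{m}{j}\,\frac{4^j}{\binom{2j}{j}},
\]
so the surviving factor is $g(j)=4^j/\binom{2j}{j}$, \emph{not} $1/(1-2j)$. This matters: the identity $\sum_{j=0}^m(-1)^j\binom{m}{j}\frac{1}{1-2j}=\frac{1}{1-2m}$ that you planned to verify is false already at $m=1$ (the left side is $2$). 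The correct reduced identity is
\[
\sum_{j=0}^m(-1)^j\binom{m}{j}\,\frac{4^j}{\binom{2j}{j}}=\frac{1}{1-2m},
\]
which is true and yields the lemma after multiplying by $\binom{n}{m}$. Your fallback suggestions (first-order recurrence in $m$ plus the initial value, or Zeilberger) handle this version without difficulty; the beta-integral route also works but is slightly less direct than you anticipated, since $4^j/\binom{2j}{j}=jB(j,\tfrac12)$ carries an extra factor of $j$.
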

Lemma \ref{binomial} can be proven by using the identity
\begin{equation}\label{hjaelpebinomial}
\sum_{j=0}^k (-1)^j \frac{\binom{2n}{2j} \binom{n-j}{m-j}}{\binom{n-\frac{1}{2}}{j}}
=
\frac{(-1)^k(2k+1)\binom{2n}{2(k+1)} \binom{n-k-1}{m-k-1}}{(2m-1)\binom{n-\frac{1}{2}}{k+1}}-\frac{\binom{n}{m}}{(2m-1)},
\end{equation}
where $n, k \in \N_0$, and $m \in \N$ such that $k<m$. Identity \eqref{hjaelpebinomial} follows by induction on $k$.

\begin{proofof}[Theorem \ref{crofton-like}]
Let $K \in \K^n$ and let $s \in \N_0$ be even. If $n=1$, formula \eqref{thm} follows from the identity
\begin{equation}\label{binom2}
\sum_{j=0}^m (-1)^j\frac{\binom{m}{j}}{1-2j}=\frac{\sqrt{\pi} \,\Gamma(m+1)}{\Gamma(m+\frac{1}{2})}
\end{equation}
with $m=\frac{s}{2}$. The left-hand side of \eqref{binom2} is a sum of alternating terms of the same form as the right-hand side of the binomial sum in Lemma \ref{binomial}. Using Lemma \ref{binomial} and then changing the order of summation yields \eqref{binom2}. 

Now assume that $n\geq2$. Using \eqref{Simpel form for T} we can rewrite the integral as
\begin{align*}
&\int_{\mathcal{E}^n_1}\Phi^{(E)}_{0,0,s}(K\cap E)\, \mu_1^n(dE)\\
&\qquad =\frac{2}{s!\omega_{s+1}} 
  \int_{\cL_1^n} Q(L)^{\frac{s}{2}} \int_{\Lp} V_0(K \cap (L+x)) \,\lambda_{\Lp}(dx)\,\nu^n_1(dL)
\\
&\qquad =\frac{2}{s!\omega_{s+1}\omega_n} \int_{S^{n-1}}u^sV_{n-1}(K\mid u^\perp)\, \Haus^{n-1}(du),
\end{align*}
by the convexity of $K$ and an invariance argument for the second equality.  Cauchy's projection formula (see, e.g., \cite[(A.43)]{Gardner}) and Fubini's theorem then imply that
\begin{align}\label{eq1}
&\int_{\mathcal{E}^n_1}\Phi^{(E)}_{0,0,s}(K\cap E)\, \mu_1^n(dE)\nonumber \\
&\qquad =\frac{1}{s!\omega_{s+1}\omega_n}  
 \int_{S^{n-1}}\int_{S^{n-1}} u^s |\langle u,v\rangle|\, \mathcal{H}^{n-1}(du)\, S_{n-1}(K,dv).
\end{align}

We now fix $v \in S^{n-1}$ and simplify the inner integral by introducing spherical coordinates (see, e.g, \cite{SH66}). Then
\begin{align*}
&\int_{S^{n-1}}u^s |\langle u,v\rangle|\, \mathcal{H}^{n-1}(du)\\
&\qquad =\int_{-1}^1\int_{S^{n-1}\cap v^\perp}(1-t^2)^{\frac{n-3}{2}}(tv+\sqrt{1-t^2}w)^s|t|\,\Haus^{n-2}(dw) \, dt
\\
&\qquad=\sum_{j=0}^s \binom{s}{j}v^j \int_{-1}^1 (1-t^2)^{\frac{n-3}{2}}t^j\sqrt{1-t^2}^{s-j} |t| \,dt \int_{S^{n-1}\cap v^\perp} w^{s-j}\,\Haus^{n-2}(dw).
\end{align*}
The integral with respect to $t$ is zero if $j$ is odd. If $j$ is even, then it is equal to the beta integral
\begin{equation*}
B\bigg(\frac{j+2}{2},\frac{n+s-j-1}{2}\bigg)
=\frac{2\omega_{n+s+1}}{\omega_{j+2}\,\omega_{n+s-j-1}}.
\end{equation*}
Hence, since $s$ is even, we conclude from \eqref{metric} that
\begin{align*}
&\int_{S^{n-1}}u^s |\langle u,v\rangle|\, \Haus^{n-1}(du)=4 \omega_{n+s+1} \sum_{j=0}^{\frac{s}{2}} \binom{s}{2j}v^{2j} \frac{1}{\omega_{2j+2} \, \omega_{s-2j+1}}Q(v^{\perp})^{\frac{s-2j}{2}}
\\
&\qquad =4 \omega_{n+s+1} \sum_{j=0}^{\frac{s}{2}} \sum_{i=0}^{\frac{s}{2}-j} (-1)^{i} \binom{s}{2j} \binom{\frac{s}{2}-j}{i} \frac{1}{\omega_{2j+2} \, \omega_{s-2j+1}}  Q^{\frac{s}{2}-j-i}v^{2(i+j)},
\end{align*}
where we have used that $Q(v^{\perp})=Q-v^2$. Substituting this into \eqref{eq1} and by the definition of $\T[2(i+j)]$, we obtain that
\begin{equation}\label{S}
\int_{\mathcal{E}^n_1}\Phi^{(E)}_{0,0,s}(K\cap E)\, \mu_1^n(dE)
=\frac{4 \omega_{n+s+1}}{s!\omega_{s+1}\omega_n} \, S,
\end{equation}
where
\begin{equation*}
S= \sum_{j=0}^{\frac{s}{2}} \sum_{i=0}^{\frac{s}{2}-j} (-1)^{i} \binom{s}{2j} \binom{\frac{s}{2}-j}{i} \frac{(2(i+j))!\omega_{2(i+j)+1}}{\omega_{2j+2} \, \omega_{s-2j+1}}  Q^{\frac{s}{2}-j-i}  \Phi_{n-1,0,2(i+j)}(K).
\end{equation*}
Re-indexing and changing the order of summation, we arrive at
\begin{align*}
S&=
\frac{\Gamma(\frac{s}{2}+\frac{1}{2})}{4 \pi^{\frac{s+3}{2}}} \sum_{k=0}^{\frac{s}{2}} (-1)^k (2k)! \omega_{2k+1} Q^{\frac{s}{2}-k}\Phi_{n-1,0,2k}(K)
\\
&\qquad \times \sum_{j=0}^{k}(-1)^{j}\binom{s}{2j}\binom{\frac{s}{2}-j}{k-j} \binom{\frac{s-1}{2}}{j}^{-1}
\\
&=
 \frac{1}{2 \pi \omega_{s+1}}\sum_{k=0}^{\frac{s}{2}} (-1)^k  \binom{\frac{s}{2}}{k}\frac{(2k)! \, \omega_{2k+1}}{1-2k} Q^{\frac{s}{2}-k}\Phi_{n-1,0,2k}(K),
\end{align*}
where we have used Lemma \ref{binomial} with $n=\frac{s}{2}$ and $m=k$.
\end{proofof}

Setting $s=2$ we immediately get the following corollary.
\begin{corollary}
Let $K \in \K^n$. Then
\begin{equation*}
\int_{\mathcal{E}^n_1}\Phi^{(E)}_{0,0,2}(K\cap E)\, \mu_1^n(dE) 
=a_n\bigg(\Phi_{n-1,0,2}(K) + \frac{1}{4\pi} Q V_{n-1}(K)\bigg),
\end{equation*}
where
\begin{equation*}
a_n=\frac{\Gamma(\frac{n}{2})}{2\Gamma(\frac{n+3}{2})\sqrt{\pi}}.
\end{equation*}
\end{corollary}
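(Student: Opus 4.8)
The plan is to specialize Theorem \ref{crofton-like} to the case $s=2$ and simplify the resulting coefficients. With $s=2$ we have $m=\frac{s}{2}=1$, so the sum on the right-hand side of \eqref{thm} runs over $k=0,1$, giving
\[
\int_{\mathcal{E}^n_1}\Phi^{(E)}_{0,0,2}(K\cap E)\, \mu_1^n(dE) = \frac{2\omega_{n+3}}{\pi\, 2!\,\omega_3^2\,\omega_n}\Bigl(c_0^{(1)} Q\,\Phi_{n-1,0,0}(K) + c_1^{(1)}\,\Phi_{n-1,0,2}(K)\Bigr).
\]
First I would compute the two constants from the formula $c_k^{(m)}=(-1)^k\binom{m}{k}\frac{(2k)!\,\omega_{2k+1}}{1-2k}$: one gets $c_0^{(1)}=\binom{1}{0}\frac{0!\,\omega_1}{1}=\omega_1$ and $c_1^{(1)}=-\binom{1}{1}\frac{2!\,\omega_3}{-1}=2\omega_3$. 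Then I would pull $c_1^{(1)}=2\omega_3$ out as the common factor so that the bracket becomes $2\omega_3\bigl(\Phi_{n-1,0,2}(K)+\frac{\omega_1}{2\omega_3}Q\,\Phi_{n-1,0,0}(K)\bigr)$, and recall that $\Phi_{n-1,0,0}(K)=V_{n-1}(K)$ is (a multiple of) the intrinsic volume, matching the $V_{n-1}(K)$ in the statement.

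Next I would identify the scalar prefactor $a_n$. Using $\omega_j = \frac{2\pi^{j/2}}{\Gamma(j/2)}$, so that $\omega_1 = \frac{2\pi^{1/2}}{\Gamma(1/2)}=2$, $\omega_3 = \frac{2\pi^{3/2}}{\Gamma(3/2)}=4\pi$, and hence $\frac{\omega_1}{2\omega_3}=\frac{2}{8\pi}=\frac{1}{4\pi}$, which is exactly the coefficient in front of $Q V_{n-1}(K)$ claimed in the corollary. It remains to check that
\[
a_n = \frac{2\omega_{n+3}}{\pi\cdot 2\cdot \omega_3^2\cdot\omega_n}\cdot 2\omega_3 = \frac{2\omega_{n+3}}{\pi\,\omega_3\,\omega_n}
\]
equals $\frac{\Gamma(n/2)}{2\Gamma((n+3)/2)\sqrt{\pi}}$. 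Substituting $\omega_{n+3}=\frac{2\pi^{(n+3)/2}}{\Gamma((n+3)/2)}$, $\omega_n=\frac{2\pi^{n/2}}{\Gamma(n/2)}$, and $\omega_3=4\pi$ and simplifying the powers of $\pi$ and the factors of $2$ should yield precisely the stated $a_n$; this is a short and routine computation.

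The argument is entirely a substitution-and-simplification exercise, so there is no genuine obstacle — the only thing requiring minor care is bookkeeping of the many $\omega_j$ factors and the gamma-function identities, in particular not conflating the indices of the $\omega$'s coming from the three distinct sources (the global prefactor, $\omega_{s+1}^2=\omega_3^2$, and the $\omega_{2k+1}$ inside $c_k^{(1)}$). I would present the proof as: (i) write out \eqref{thm} for $s=2$; (ii) evaluate $c_0^{(1)},c_1^{(1)}$; (iii) factor out $2\omega_3$ and use $\omega_1/(2\omega_3)=1/(4\pi)$; (iv) collapse the remaining constant to $a_n$ via the gamma representation of $\omega_j$.
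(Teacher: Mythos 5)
Your proposal is correct and is exactly the paper's argument: the corollary is obtained by setting $s=2$ in Theorem \ref{crofton-like}, evaluating $c_0^{(1)}=\omega_1=2$ and $c_1^{(1)}=2\omega_3=8\pi$, and simplifying the constants via $\omega_j=2\pi^{j/2}/\Gamma(j/2)$. All your intermediate values ($\omega_1/(2\omega_3)=1/(4\pi)$ and $a_n=2\omega_{n+3}/(\pi\,\omega_3\,\omega_n)$) check out.
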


The Crofton formula in Theorem \ref{crofton-like} expresses the integral of the measurement function $\MT[s]$ as a linear combination of certain surface tensors of $K \in \K^n$. This could, in principle, be used to obtain unbiased stereological estimators of the linear combinations. However, it is more natural to ask what measurement one should use in order to obtain $\T[s]$ as a Crofton-type integral. For even $s$ the tensor $\T[s]$ appears in the last term of the sum on the right-hand side of $\eqref{thm}$. But surface tensors of lower rank appear in the remaining terms of the sum. Therefore, we need to express the lower rank tensors $\T$ for $k=0, \dots,  \frac{s}{2}-1$ as integrals. This can be done by using Theorem $\ref{crofton-like}$ with $s=2k$ for $k=0, \dots, \frac{s}{2}-1$. This way, we get $\frac{s}{2} + 1$ linear equations, which give rise to the linear system 
\begin{align*}
&\begin{pmatrix}
C_0\integral[0] \\
C_2\integral[2]\\
\vdots \\
 \\
C_s\integral
\end{pmatrix}
= C 
\begin{pmatrix}
\T[0] \\
\T[2] \\
\\
\vdots \\
 \\
\T[s]
\end{pmatrix}
\end{align*}
where
\begin{equation*}
C=\begin{pmatrix}
c_0^{(0)} & 0 & 0 & \dots & 0 \\
c_0^{(1)}Q & c_1^{(1)} & 0 & & \vdots \\
\vdots &&\ddots &  & 0\\
&&&&\\
c_0^{(\frac{s}{2})}Q^{\frac{s}{2}} & c_1^{(\frac{s}{2})}Q^{\frac{s}{2}-1} & \dots  &c_{\frac{s}{2}-1}^{(\frac{s}{2})}Q & c_{\frac{s}{2}}^{(\frac{s}{2})}
\end{pmatrix}
\end{equation*}
and $C_j=\frac{\pi j! \omega_{j+1}^2 \omega_n}{2\omega_{n+j+1}}$ for $j=0,2,4, \dots, s$.
Our aim is to express $\T[s]$ as an integral, hence we have to invert the system. Notice that the constants $c_i^{(i)}$ are non-zero, which ensures that the system actually is invertible. The system can be inverted by the matrix
\begin{equation}\label{invers matrix}
D=
\begin{pmatrix}
d_{00} & 0 & 0 & \dots && 0 \\
d_{10}Q & d_{11} & 0 & &&\vdots \\
d_{20}Q^2 & d_{21}Q & d_{22} & 0 \\
\vdots & & & \ddots & & 0 \\
d_{\frac{s}{2}0}Q^{\frac{s}{2}} & d_{\frac{s}{2}1}Q^{\frac{s}{2}-1} & \dots & & & d_{\frac{s}{2} \frac{s}{2}}
\end{pmatrix},
\end{equation}
where $d_{ii}=\frac{1}{c_i^{(i)}}$ for $i=0, \dots, \frac{s}{2}$, and $d_{ij}=-\frac{1} {c_i^{(i)}}\sum_{k=j}^{i-1}c_k^{(i)}d_{kj}$ for $i=1, \dots, \frac{s}{2}$ and $j=0, \dots, i-1$. In particular, we have
\begin{equation}\label{reffinal}
\T[s] = \sum_{j=0}^\frac{s}{2} d_{\frac{s}{2}j} Q^{\frac{s}{2}-j}C_{2j} \integral[2j].
\end{equation}
Notice that only the dimension of the matrix \eqref{invers matrix} depends on $s$, hence we get the same integral formulas for the lower rank tensors for different choices of $s$. Formula \eqref{Simpel form for T} and the above considerations give the following `inverse' version of the Crofton's formula.

\begin{theorem}\label{crofton2}
Let $K \in \K^n$ and let $s \in \N_0$ be even. Then
\begin{equation}\label{thm2}
\int_{\En_1}  G_s(\Eorigo)V_0(K \cap E)\, \mu_1^n (dE) = \T[s],
\end{equation}
where 
\begin{equation*}
G_{2m}(L):=\sum_{j=0}^m  \frac{2  d_{mj} C_{2j} }{(2j)! \, \omega_{2j+1}} Q^{m-j}Q(L)^j
\end{equation*} 
for $L \in \cLo$ and $m \in \N_0$.
\end{theorem}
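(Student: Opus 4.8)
The plan is to obtain Theorem \ref{crofton2} by inverting the triangular linear system already assembled in the paragraph preceding the statement: one combines Theorem \ref{crofton-like}, applied in turn to the even ranks $0,2,\dots,s$, with the explicit formula \eqref{Simpel form for T} for the measurement function, and then solves for the top-rank surface tensor.

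First I would multiply \eqref{thm}, with $s$ replaced by $2m$ for $m=0,1,\dots,\frac{s}{2}$, by the constant $C_{2m}=\pi(2m)!\,\omega_{2m+1}^2\,\omega_n/(2\omega_{n+2m+1})$, so that each identity takes the form
\[
C_{2m}\int_{\En_1}\Phi^{(E)}_{0,0,2m}(K\cap E)\,\mu_1^n(dE)=\sum_{k=0}^{m}c_k^{(m)}Q^{m-k}\Phi_{n-1,0,2k}(K).
\]
Stacking these $\frac{s}{2}+1$ equations yields the matrix identity in the excerpt, with the lower-triangular matrix $C$ whose $(m,k)$ entry is $c_k^{(m)}Q^{m-k}$. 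Since the symmetric tensor product is commutative and $Q^{m-k}$ raises tensor rank by $2(m-k)$, this identity lives in the graded commutative algebra generated by $Q$, and inverting $C$ amounts to inverting a lower-triangular matrix over the polynomial ring $\R[Q]$; its diagonal entries $c_m^{(m)}=(-1)^m(2m)!\,\omega_{2m+1}/(1-2m)$ are nonzero because $1-2m\neq 0$ for integral $m$, so $C$ is invertible.

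Next I would check that the matrix $D$ in \eqref{invers matrix} is indeed $C^{-1}$: writing its $(m,j)$ entry as $d_{mj}Q^{m-j}$, one computes $(CD)_{mj}=Q^{m-j}\sum_{k=j}^{m}c_k^{(m)}d_{kj}$, which equals $1$ when $m=j$ by $d_{mm}=1/c_m^{(m)}$ and vanishes when $m>j$ by the recursion $d_{mj}=-\frac{1}{c_m^{(m)}}\sum_{k=j}^{m-1}c_k^{(m)}d_{kj}$ defining the $d_{mj}$. Reading off the last component of the vector identity obtained by left-multiplying the stacked system by $D$ then produces exactly \eqref{reffinal}. Into this I would substitute \eqref{Simpel form for T} in the form $\Phi^{(E)}_{0,0,2j}(K\cap E)=\frac{2}{(2j)!\,\omega_{2j+1}}\,Q(\Eorigo)^jV_0(K\cap E)$, pull the finite sum over $j$ inside the integral (legitimate since the sum is finite and $V_0(K\cap E)\in\{0,1\}$ is $\mu_1^n$-integrable, the set of lines meeting the bounded set $K$ having finite measure), and collect terms; the resulting coefficient of $V_0(K\cap E)$ is precisely $G_{2m}(\Eorigo)$ with $m=s/2$, that is, $G_s(\Eorigo)$, by the definition of $G_{2m}$, which gives \eqref{thm2}. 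For odd $s$ there is nothing to prove, as the theorem only asserts \eqref{thm2} for even $s$.

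I do not expect a genuine obstacle; the argument is essentially bookkeeping built on Theorem \ref{crofton-like} and the already displayed inversion formula. The one point requiring care is the conceptual one that $Q$ is a tensor rather than a scalar, so one must make sure the triangular inversion is performed in the correct commutative graded setting and that each matrix entry acts by raising tensor rank consistently with the ranks of the $\Phi_{n-1,0,2k}(K)$; once that is in place, the verification $CD=I$ from the recursion and the identification of $G_s$ in the last step are immediate.
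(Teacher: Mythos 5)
Your proposal is correct and follows essentially the same route as the paper: the paper likewise stacks the identities of Theorem \ref{crofton-like} for ranks $0,2,\dots,s$ into the triangular system displayed before the theorem, inverts it with the matrix $D$ to obtain \eqref{reffinal}, and then substitutes \eqref{Simpel form for T} to read off $G_s$. Your added checks (that $CD=I$ follows from the recursion for the $d_{mj}$, and that the interchange of sum and integral is harmless) are sound but are exactly the ``above considerations'' the paper invokes.
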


It should be remarked that the measurement function in \eqref{thm2} is just a linear combination of the relative tensors of even rank at most $s$, but we prefer the present form to indicate the dependence on $K$ more explicitly.

\begin{example}\label{s er 4}\rm 
For $s=4$ the matrices are 
\begin{equation*}
C=
\begin{pmatrix}
2 & 0 & 0 \\
2Q & 8 \pi & 0 \\
2Q^2 & 16 \pi Q & -\frac{64 \pi^2}{3}
\end{pmatrix}
\end{equation*}
and
\begin{equation}\label{invers matrix ex}
D=
\begin{pmatrix}
\frac{1}{2} & 0 & 0 \\[1ex]
-\frac{1}{8 \pi}Q & \frac{1}{8 \pi} & 0 \\[1ex]
-\frac{3}{64 \pi^2}Q^2 & \frac{3}{32 \pi^2}Q & -\frac{3\pi^2}{64}
\end{pmatrix}.
\end{equation}
Since $C_0=\frac{2 \pi \omega_n}{\omega_{n+1}}$, $C_2=\frac{16 \pi^3 \omega_{n}}{\omega_{n+3}}$ and $C_4=\frac{256 \pi^5 \omega_n}{3 \omega_{n+5}}$, we have
\begin{equation*}
G_4(L)= -\frac{\omega_n}{32 \pi \omega_{n+1}}\big(3Q^2 - 6(n+1)QQ(L) + \pi^4(n+1)(n+3)Q(L)^2\big) ,
\end{equation*}
and
\begin{equation*}
G_2(L)=\frac{\omega_n}{4\omega_{n+1}} \bigg((n+1)Q(L)-Q \bigg) 
\end{equation*}
for $L \in \cLo.$
\end{example}

In Theorem \ref{crofton2} we only considered the situation, where $s$ is even. It is natural to ask whether $\T[s]$ can also be written as a linear Crofton integral when $s$ is odd. The case $s=1$ is trivial, as the tensor $\Phi_{n-1,0,1}(K)=0$ for all $K\in\K^n$.  If $n=1$, then $\T[s]=0$ for all odd $s$, since the area measure of order 0 is the Hausdorff measure on the sphere. Apart from these trivial examples, $\Phi_{n-1,0,s}$ cannot be written as a linear Crofton-type integral, when $s$ is odd and the measurement function satisfies some rather weak assumptions. This is shown in Theorem \ref{s ulige}.

\begin{theorem} \label{s ulige}
Let $n \geq 2$ and let $s>1$ be odd. Then there exists neither a translation invariant nor a bounded measurable measurement function $\nobreak{\alpha \colon \K^n \rightarrow \Tensor^s}$ such that
\begin{equation}\label{eq}
\int_{\En_1} \alpha(K \cap E) \, \mu_1^n(dE)=\T[s]
\end{equation}
for all $K \in \K^n.$
\end{theorem}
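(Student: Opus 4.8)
The plan is to exploit the point reflection $\sigma\colon x\mapsto -x$ together with the fact that, for odd $s$, the surface tensor is \emph{odd} under it: from the transformation behaviour of $\Lambda_{n-1}$ one has $\Phi_{n-1,0,s}(-K)=(-1)^{s}\Phi_{n-1,0,s}(K)=-\Phi_{n-1,0,s}(K)$, where $-K:=\sigma(K)$. Hence it suffices to show that, for both types of $\alpha$ permitted in the statement, the left-hand side of \eqref{eq} is, exactly or asymptotically, invariant under $K\mapsto -K$; this forces $\Phi_{n-1,0,s}\equiv 0$, which is false for odd $s>1$ when $n\ge 2$ (see the last step). Two preliminary remarks. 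First, since \eqref{eq} holds with finite right-hand side while the lines missing a fixed $K$ form a set of infinite $\mu_1^n$-measure, one must have $\alpha(\emptyset)=0$; therefore the integral in \eqref{eq} effectively runs over the set $\{E\in\En_1\colon E\cap K\neq\emptyset\}$, which has finite $\mu_1^n$-measure $M_K$. Second, $\mu_1^n$ is invariant both under translations and under $E\mapsto\sigma(E)$.

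The geometric heart of the argument is the observation that for every line $E$ the set $\sigma(K\cap E)=(-K)\cap\sigma(E)$ is a \emph{translate} of $K\cap E$ by a vector of norm at most $2D_K$, where $D_K:=\max_{x\in K}\norm x\norm$: if $K\cap E=[a,b]$ then $\sigma(K\cap E)=[-b,-a]=(K\cap E)-(a+b)$, and if $K\cap E=\{p\}$ then $\sigma(K\cap E)=(K\cap E)-2p$, while the empty case is trivial. Substituting $E\mapsto\sigma(E)$ in \eqref{eq} applied to $-K$ gives $\Phi_{n-1,0,s}(-K)=\int_{\En_1}\alpha(\sigma(K\cap E))\,\mu_1^n(dE)$. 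If $\alpha$ is \emph{translation invariant} this integrand equals $\alpha(K\cap E)$ for every $E$, so the right-hand side equals $\Phi_{n-1,0,s}(K)$; combined with the first paragraph this yields $\Phi_{n-1,0,s}\equiv 0$ and settles the translation invariant case.

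For \emph{bounded measurable} $\alpha$ the cancellation is only approximate, and the plan is to replace $\alpha$ by a spatial average which is \emph{almost} translation invariant. For $R>0$ set $\alpha_R(X):=\kappa_n^{-1}R^{-n}\int_{B(0,R)}\alpha(X+t)\,\lambda(dt)$. First I would check that $\alpha_R$ again satisfies \eqref{eq}: using $(X\cap E)+t=(X+t)\cap(E+t)$, the translation invariance of $\mu_1^n$, and \eqref{eq} for the body $K+t$, the inner integral $\int_{\En_1}\alpha((K\cap E)+t)\,\mu_1^n(dE)$ equals $\Phi_{n-1,0,s}(K+t)=\Phi_{n-1,0,s}(K)$ for every $t$ (translation invariance of the surface tensor), so that Fubini — legitimate since $\alpha$ is bounded and the relevant line set has finite measure — gives $\int_{\En_1}\alpha_R(K\cap E)\,\mu_1^n(dE)=\Phi_{n-1,0,s}(K)$. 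Moreover $\alpha_R$ is bounded by $A:=\sup_{X\in\K^n}\norm{\alpha(X)}\norm$, and a symmetric-difference estimate for the balls $B(0,R)$ and $B(a,R)$ yields $\norm{\alpha_R(X+a)-\alpha_R(X)}\norm\le c(n)\,A\,\norm a\norm/R$ for all $X$, whenever $\norm a\norm\le R$, with $c(n)$ depending only on $n$. Running the second paragraph with $\alpha_R$ in place of $\alpha$ (so $\Phi_{n-1,0,s}(-K)=\int_{\En_1}\alpha_R(\sigma(K\cap E))\,\mu_1^n(dE)$, with the integrand differing from $\alpha_R(K\cap E)$ by an argument-shift of norm $\le 2D_K$, vanishing unless $E\cap K\neq\emptyset$), I obtain for $R\ge 2D_K$
\begin{equation*}
2\,\norm{\Phi_{n-1,0,s}(K)}\norm=\norm{\Phi_{n-1,0,s}(-K)-\Phi_{n-1,0,s}(K)}\norm\le M_K\,c(n)\,(2D_K)\,A/R,
\end{equation*}
and letting $R\to\infty$ gives $\Phi_{n-1,0,s}(K)=0$ for every $K\in\K^n$.

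It remains to contradict $\Phi_{n-1,0,s}\equiv 0$ by exhibiting one body with non-vanishing surface tensor of rank $s$. I would use the cone $K_0:=\conv(B^{n-1}\cup\{e_n\})$ over the unit ball $B^{n-1}$ in $e_n^{\perp}$, which is non-degenerate since $n\ge 2$: its surface area measure is $\kappa_{n-1}\,\delta_{-e_n}$ plus a rotation-symmetric mass on $\{2^{-1/2}(w+e_n)\colon w\in S^{n-2}\}$ whose total is pinned down by the centroid relation $\int_{S^{n-1}}v\,S_{n-1}(K_0,dv)=0$; evaluating the $e_n^{\otimes s}$-component of $\Phi_{n-1,0,s}(K_0)$ then gives a nonzero multiple of $2^{(1-s)/2}-1$, which does not vanish precisely because $s>1$. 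The step I expect to be the main obstacle is the bounded case, specifically making the ``almost translation invariant'' idea rigorous: verifying that the spatial average $\alpha_R$ still obeys \eqref{eq} and controlling the oscillation $\norm{\alpha_R(\cdot+a)-\alpha_R(\cdot)}\norm=O(\norm a\norm/R)$ uniformly in the argument, so that the reflection identity survives in the limit $R\to\infty$. The translation invariant case and the explicit example are, by comparison, routine.
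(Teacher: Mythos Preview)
Your proposal is correct and follows essentially the same route as the paper: reduce to showing $\Phi_{n-1,0,s}(-K)=\Phi_{n-1,0,s}(K)$ via the reflection/translation identity $\sigma(K\cap E)=(K\cap E)+z_E$, handle the translation invariant case immediately, and in the bounded case pass to the spatial average $\alpha_R$ and use a symmetric-difference bound on $B(0,R)\triangle B(a,R)$ to let $R\to\infty$. The only notable addition on your side is the explicit cone example $K_0=\conv(B^{n-1}\cup\{e_n\})$ to witness $\Phi_{n-1,0,s}\not\equiv 0$; the paper simply asserts this nonvanishing without exhibiting a body.
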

\begin{proof} Let $\alpha \colon \K^n \rightarrow \Tensor^s$ be a measurable and bounded function that satisfies equation \eqref{eq}. Since $\mu_1^n(\{E \in \En_1 \mid E \cap K = \emptyset\})=\infty$ for $K \in \K^n$, we have $\alpha(\emptyset)=0$. Now define the averaged function
\begin{equation*}
\alpha_r(M)=\frac{1}{V_n(rB^n)}\int_{rB^n}\alpha(M+x) \, \lambda (dx), \qquad M \in \K^n,
\end{equation*}
for $r > 0$. Since $\alpha$ is measurable and bounded, the average function $\alpha_r$ is well-defined. Clearly $\alpha_r(\emptyset)=0$. Using Fubini's theorem, the invariance of $\mu_1^n$ and the fact that $\Phi_{n-1,0,s}$ is translation invariant, we get that
\begin{equation*}
\int_{\En_1} \alpha_r(K \cap E) \, \mu_1^n (dE)
=\frac{1}{V_n(rB^n)} \int_{rB^n} \Phi_{n-1,0,s}(K + x) \, \lambda(dx)
=\T[s].
\end{equation*}
Let $K \in \K^n$ be such that $K \subseteq B^n$. Since $K \cap E$ is either the empty set or a a line segment in $B^n$ when $E \in \En_1$, there exists a vector $z_E \in \R^n$ with $\norm z_E \norm \leq 2$ such that $-(K \cap E)=(K \cap E) + z_E$. Let $\A =\{E \in \En_1 \mid B^n \cap E \neq \emptyset\}$,  
let $B_1\Delta B_2$ denote the symmetric difference of two sets $B_1,B_2$, and assume that $|\alpha|\le M$ for some constant $M$.  Then
\begin{align*}
& \big|\T[s] - \Phi_{n-1,0,s}(-K)\big|
=\bigg| \int_{\A} \alpha_r(K \cap E) - \alpha_r(-(K \cap E)) \, \mu_1^n (dE) \bigg|
\\
&\qquad \leq \frac{1}{V_n(rB^n)}   \int_\A \bigg| \int_{rB^n} \alpha((K \cap E) + x)\, \lambda(dx) \\
&\qquad\qquad\qquad\qquad\qquad\qquad\qquad -  \int_{rB^n+z_E} \alpha((K \cap E) + x)\, \lambda (dx)  \bigg|  \,\mu_1^n (dE)\\
&\qquad \leq \frac{1}{V_n(rB^n)} \int_{\A}\int_{(rB^n+z_E)\Delta (rB^n)} \left|\alpha((K\cap E)+x)\right|\, \lambda(dx)\, \mu_1^n (dE)\\
&\qquad\leq \frac{2M}{V_n(rB^n)} \int_{\A} V_n((rB^n+z_E)\setminus (rB^n))\, \mu_1^n (dE)\\
&\qquad\leq 2M\frac{ (r+2)^n-r^n}{r^n}\kappa_{n-1}\longrightarrow 0\qquad\text{as $r \rightarrow \infty$. }
\end{align*}
Here we used that $(rB^n+z_E)\setminus (rB^n)\subseteq (r+2)B^n\setminus (rB^n)$ and $\mu^n_1(\A)=\kappa_{n-1}$.

Hence, we get $\T[s]=\Phi_{n-1,0,s}(-K)$. Since $s$ is odd, we also have  $\T[s]=-\Phi_{n-1,0,s}(-K)$. Therefore $\T[s]=0$, which is not the case for all $K \subseteq  B^n$, since $s>1$. Then, by contradiction, \eqref{eq} cannot be satisfied by a bounded measurement function, when $s>1$ is odd.

Now assume that $\alpha$ is translation invariant and satisfies equation \eqref{eq}. As $-(K \cap E)$ is a translation of $K \cap E$, we have
\begin{equation*}
\int_{\En_1} \alpha(-K \cap E) \, \mu_1^n (dE)
= \int_{\En_1} \alpha(-(K \cap E)) \, \mu_1^n (dE)
=\int_{\En_1} \alpha(K \cap E) \, \mu_1^n (dE),
\end{equation*}
implying $\Phi_{n-1,0,s}(-K)=\T[s]=-\Phi_{n-1,0,s}(-K)$, and hereby we obtain that  $\T[s]=0$ for all $K \in \K^n$. This is a contradiction as before.
\end{proof}

\section{Design based estimation}\label{sec estimation}
In this section we use the integral formula \eqref{thm2} in Theorem \ref{crofton2} to derive unbiased estimators of the surface tensors $\T[s]$ of $K \in \K^n$, when $s$ is even. We assume throughout this chapter that $n \geq 2$. Three different types of estimators based on 1-dimensional linear sections are presented. First, we establish estimators based on isotropic uniform random lines, then estimators based on random lines in vertical sections and finally estimators based on non-isotropic uniform random lines.

\subsection{Estimation based on isotropic uniform random lines} \label{IUR}
In this section we construct estimators of $\T[s]$ based on isotropic uniform random lines. Let $K \in \K^n.$ We assume that (the unknown set) $K$ is contained in a compact reference set $A \subseteq \R^n$, the latter being known. Now let $E$ be an \textit{isotropic uniform random (IUR) line in $\R^n$ hitting $A$}, i.e., the distribution of $E$ is given by
\begin{equation} \label{IURdistribution}
\Prob(E \in \A)= c_1(A) \int_{\A} \1(E' \cap A \neq \emptyset)\,\mu_1^n(dE')
\end{equation}
for $\A \in \B(\En_1),$ where $c_1(A)$ is the normalizing constant 
\begin{equation*}
c_1(A)=\bigg(\int_{\En_1} \1(E' \cap A \neq \emptyset)\,\mu_1^n(dE')\bigg)^{-1}.
\end{equation*}
By \eqref{thm} with $ s=0 $ the normalizing constant becomes $c_1(A)=\frac{\omega_n }{2 \kappa_{n-1}}V_{n-1}(A)^{-1}$, when $A$ is a convex body.
Then Theorem \ref{crofton2} implies that
\begin{equation}\label{Estimator}
c_1(A)^{-1} G_{s}(\Eorigo)V_0(K \cap E)
\end{equation}
is an unbiased estimator of $\T[s]$, when $s$ is even.

\begin{example} \label{example}\rm 
Using the expressions of $G_2$ and $G_4$ in Example $\ref{s er 4}$ we get that  
\begin{align*}
-\frac{V_{n-1}(A)}{32\pi^2}\big(3Q^2 - 6(n+1)QQ(L) + \pi^4(n+1)(n+3)Q(L)^2\big) V_0(K \cap E)
\end{align*}
is an unbiased estimator of $\T[4],$ and
\begin{align}\label{s er 2}
\frac{V_{n-1}(A)}{4\pi}\bigg((n+1)Q(\Eorigo)-Q \bigg) V_0(K \cap E)
\end{align}
is an unbiased estimator of $\T[2]$, when $A$ is a convex body. For $n=3$, these estimators read
\begin{equation*}
-\frac{V_2(A)}{32 \pi^2}\bigg(3 Q^2-24QQ(\Eorigo)+24\pi^4Q(\Eorigo)^2\bigg)V_0(K \cap E)
\end{equation*}
and
\begin{equation}\label{IURn3}
\frac{V_2(A)}{\pi} \bigg(Q(\Eorigo)-\frac{1}{4}Q\bigg)V_0(K \cap E).
\end{equation}
\end{example} 
 
An investigation of the estimators in Example \ref{example} shows that they possess some unfavourable statistical properties. If $K \cap E = \emptyset$ the estimators are simply zero. Furthermore, if $K \cap E \neq \emptyset,$ the matrix representation of the estimator \eqref{s er 2} of $\T[2]$ is, in contrast to $ \T[2] $, not positive semi-definite. In fact, the eigenvalues of the matrix representation of $(n+1)Q(\Eorigo)-Q$ are $n$ (with multiplicity $ 1 $) and $-1$ (with multiplicity $n-1$). It is not surprising that estimators based on the measurement of one single line, are not sufficient, when we are estimating tensors with many unknown parameters. To improve the estimators, they can be extended in a natural way to use information from $N$ \textit{IUR} lines for some $N \in \N$. In addition,  the integral formula \eqref{thm2} can be rewritten in the form
\begin{align} \label{Alternativ}
\T[s]&=\int_{\cLo} \int_{L^\perp} G_s(L)V_0(K \cap (L+x))\, \lambda_{\Lp}(dx) \, \nu_1^n (dL) \nonumber \\
&=\int_{\cLo} G_s(L) V_{n-1}(K \vert L^\perp) \, \nu_1^n (dL),
\end{align}
which implies that
\begin{equation}\label{projestimator}
\frac{1}{N}\sum_{i=1}^NG_s(L_i)V_{n-1}(K \vert L_i^\perp)
\end{equation}
is an unbiased estimator of $\T[s]$, when $L_1, \dots L_N \in \cLo$ are $ N $ isotropic lines (through the origin) for an $ {N \in \N} $. When $K$ is full-dimensional this estimator never vanishes. In the case where $s=2$ the estimator becomes
\begin{equation}\label{EstimatorN}
\frac{1}{N}\frac{\omega_n}{4\omega_{n+1}}\sum_{i=1}^N\big((n+1)Q(L_i)-Q \big) V_{n-1}(K \vert L_i^\perp).
\end{equation}
In stereology it is common practice to use orthogonal test lines.  If we set $ N=n $ and let $ L_1, \dots, L_n $ be isotropic, pairwise orthogonal lines, then the estimator \eqref{EstimatorN} becomes positive definite exactly when 
\begin{equation}\label{posdefcondition}
(n+1)V_{n-1}(K \vert L_i^\perp) > \sum_{j=1}^n V_{n-1}(K \mid L_j^\perp)
\end{equation}
for all $i=1, \dots, n.$ This is a condition on $ K $ requiring that $ K $ is not too eccentric. A sufficient condition for \eqref{posdefcondition} to hold makes use of the radius $ R(K) $ of the smallest ball containing $ K $ and the radius $ r(K) $ of the largest ball contained in $ K $. If
\begin{equation}
\frac{r(K)}{R(K)}> \bigg(1-\frac{1}{n}\bigg)^{\frac{1}{n-1}},
\end{equation}
then \eqref{posdefcondition} is satisfied, and hence the estimator \eqref{EstimatorN} with $ n $ orthogonal, isotropic lines is positive definite. In $ \R^2 $ this means that $ 2r(K) > R(K) $ is sufficient for a positive definite estimator \eqref{EstimatorN}, and in particular for all ellipses for which the length of the longer main axis  does not exceed twice the length of the smaller main axis, \eqref{EstimatorN} yields positive definite estimators. For ellipses, this criterion is also necessary as the following example shows. 
\begin{example}\rm
Consider the situation where $ n=2 $ and $ K $ is an ellipse, $ K=\{x \in \R^2 \mid x^\top B x \leq 1\} $, given by the matrix 
\begin{equation*}
 B=\begin{pmatrix}
\alpha^{-2} & 0 \\ 0 & (k\alpha)^{-2}
\end{pmatrix},
\end{equation*}
where $ \alpha > 0 $ and $ k \in (0,1] $. The parameter $ k $ determines the eccentricity of $ K $. If $ k \in (\frac{1}{2},1] $, and $ L_1 $ and $ L_2 $ are orthogonal, isotropic random lines in $ \R^2 $, the estimator \eqref{EstimatorN} becomes positive definite by the above considerations. Now let $ k \in [0,1/2] $. Since $ n=2 $, each pair of orthogonal lines is determined by a constant $ \phi \in [0, \frac{\pi}{2}) $ by letting $ L_1 = u_{\phi}^\perp $ and $ L_2=u_{\phi+ \frac{\pi}{2}}^\perp $, where $ u_{\phi}=(\cos(\phi),\sin(\phi))^\top $. Then
\begin{equation*}
V_{n-1}(K \mid L_1^\perp)=2h(K,u_{\phi})=2 \alpha \sqrt{\cos^2(\phi)+k^2 \sin^2(\phi)}
\end{equation*}
and
\begin{equation*}
V_{n-1}(K \mid L_2^\perp ) = 2 \alpha \sqrt{\sin^2(\phi)+k^2 \cos^2(\phi)}.
\end{equation*}
Condition \eqref{posdefcondition} is satisfied if and only if
\begin{equation*}
\phi \in [\sin^{-1}\bigg(\sqrt{\frac{1-4k^2}{5(1-k^2)}}\bigg), \cos^{-1}\bigg(\sqrt{\frac{1-4k^2}{5(1-k^2)}}\bigg) ],
\end{equation*}
and the probability that the estimator is positive definite, when $ L_1 $ and $ L_2 $ are orthogonal, isotropic lines (corresponding to $ \phi $ being uniformly distributed on $ [0,\frac{\pi}{2}] $) is
\begin{equation*}
\frac{2}{\pi}\bigg(\cos^{-1}\bigg(\sqrt{\frac{1-4k^2}{5(1-k^2)}}\bigg)-\sin^{-1}\bigg(\sqrt{\frac{1-4k^2}{5(1-k^2)}}\bigg)\bigg),
\end{equation*}
which converges to $ \frac{2}{\pi} \big(\cos^{-1}(\sqrt{\frac{1}{5}})- \sin^{-1}(\sqrt{\frac{1}{5}}) \big) \approx 0.41 $ as $ k $ converges to 0. 
\end{example}

In $ \R^2 $ the estimator \eqref{EstimatorN} can alternatively be combined with a systematic sampling approach with $ N $ isotropic random lines. Let $ N \in \N $, and let $ \phi_0 $ be uniformly distributed on $ [0,\frac{\pi}{N}] $. Moreover, let $ \phi_i=\phi_0 + i \frac{\pi}{N} $ for $ i=1, \dots, N-1 $. Then $ u_{\phi_0}, \dots, u_{\phi_{N-1}} $ are $ N $ systematic isotropic uniform random directions in the upper half of $ S^1 $, where $ u_{\phi}=(\cos(\phi),\sin(\phi))^\top $. As the estimator \eqref{EstimatorN} is a tensor of rank 2, it can be identified with the symmetric $2 \times 2$ matrix, where the $(i,j)$'th entry is the estimator evaluated at $(e_i,e_j)$, where $(e_1,e_2)$ is the standard basis of $\R^2$. The estimator becomes
\begin{equation}\label{syst}
S_N(K, \phi_0)=\frac{1}{N}\sum_{i=0}^{N-1} \begin{pmatrix}
3\cos^2(\phi_i)-1 & 3\cos(\phi_i) \sin(\phi_i) \\
3\cos(\phi_i) \sin(\phi_i) & 3\sin^2(\phi_i)-1
\end{pmatrix} V_1(K \mid u_{{\phi}_i}^\perp).
\end{equation}

\begin{example} \label{exsyst}\rm 
To investigate how the estimator $ S_N(K, \phi_0) $ performs we estimate the probability that the estimator is positive definite for three different origin-symmetric convex bodies in $ \R^2 $;  a parallelogram, a rectangle, and an ellipse. Thus let
\begin{align*} 
K_1&=\mathrm{conv}\{ (1,\epsilon),(-1,\epsilon),(-1,-\epsilon),(1,-\epsilon)\}, \\
K_2&=\mathrm{conv}\{ (1,0),(0,\epsilon),(-1,0),(0,-\epsilon)\} \\
\intertext{and}
K_3&= \{x \in \R^2 \mid x^\top \begin{pmatrix}
1  & 0 \\ 0 & \frac{1}{\sqrt{\epsilon}}
\end{pmatrix} x \leq 1\}  
\end{align*}
with $\epsilon=0.1$.
The support functions, and hence the intrinsic volumes $V_1(K_i \vert u_\phi^\perp)$, of $ K_1, K_2$ and $ K_3 $ have simple analytic expressions, and the estimator $ S_N(K_i, \phi_0) $ can be calculated for $ \phi_0 \in [0,\frac{\pi}{N}] $ and $ i=1,2,3 $. The eigenvalues of the estimators can be calculated numerically, and the probability that the estimators $ S_N(K_i,\phi_0) $ are positive definite, when $ \phi_0 $ is uniformly distributed on $ [0, \frac{\pi}{N}] $, can hereby be estimated. For each choice of $ N $, the estimate of the probability is based on $ 500 $ equally spread values of $ \phi_0 $ in $ [0,\frac{\pi}{N}] $. The estimate of the probability that $ S_N(K_i,\phi_0) $ is positive definite is plotted against the number of equidistant lines $ N $ for $i=1,2,3$ in Figure \ref{Plot of prob for posdef}. The plots in Figure \ref{Plot of prob for posdef} show that even though we consider rather eccentric shapes, the number $N$ of lines needed to get a positive definite estimator with probability $1$ is in all cases less than $7$.
\end{example}

\begin{figure}
\centering
\begin{minipage}[c]{0.47 \textwidth}
\centering
\includegraphics[scale=0.26]{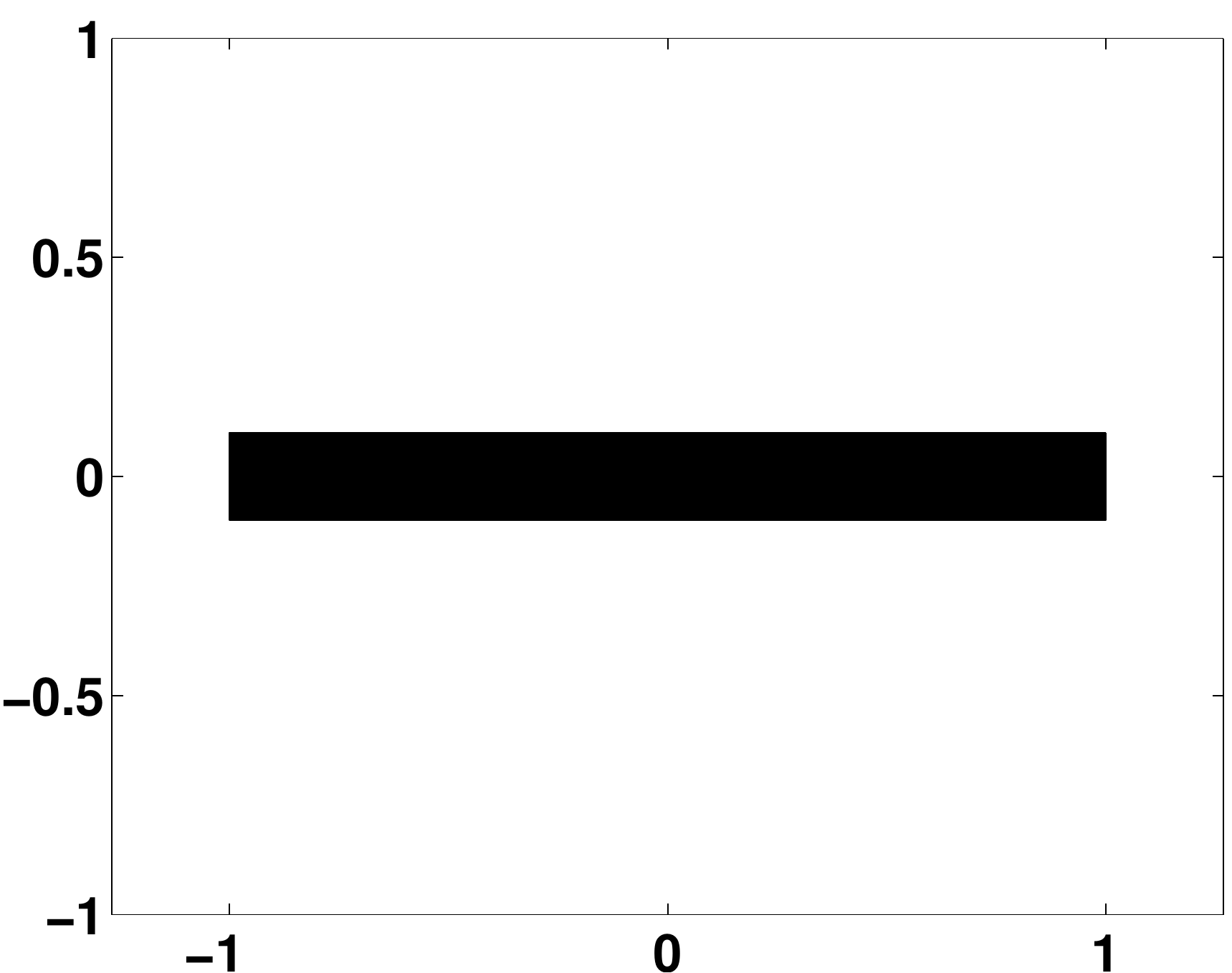}
\end{minipage}
\begin{minipage}[c]{0.47 \textwidth}
\centering
\includegraphics[scale=0.35]{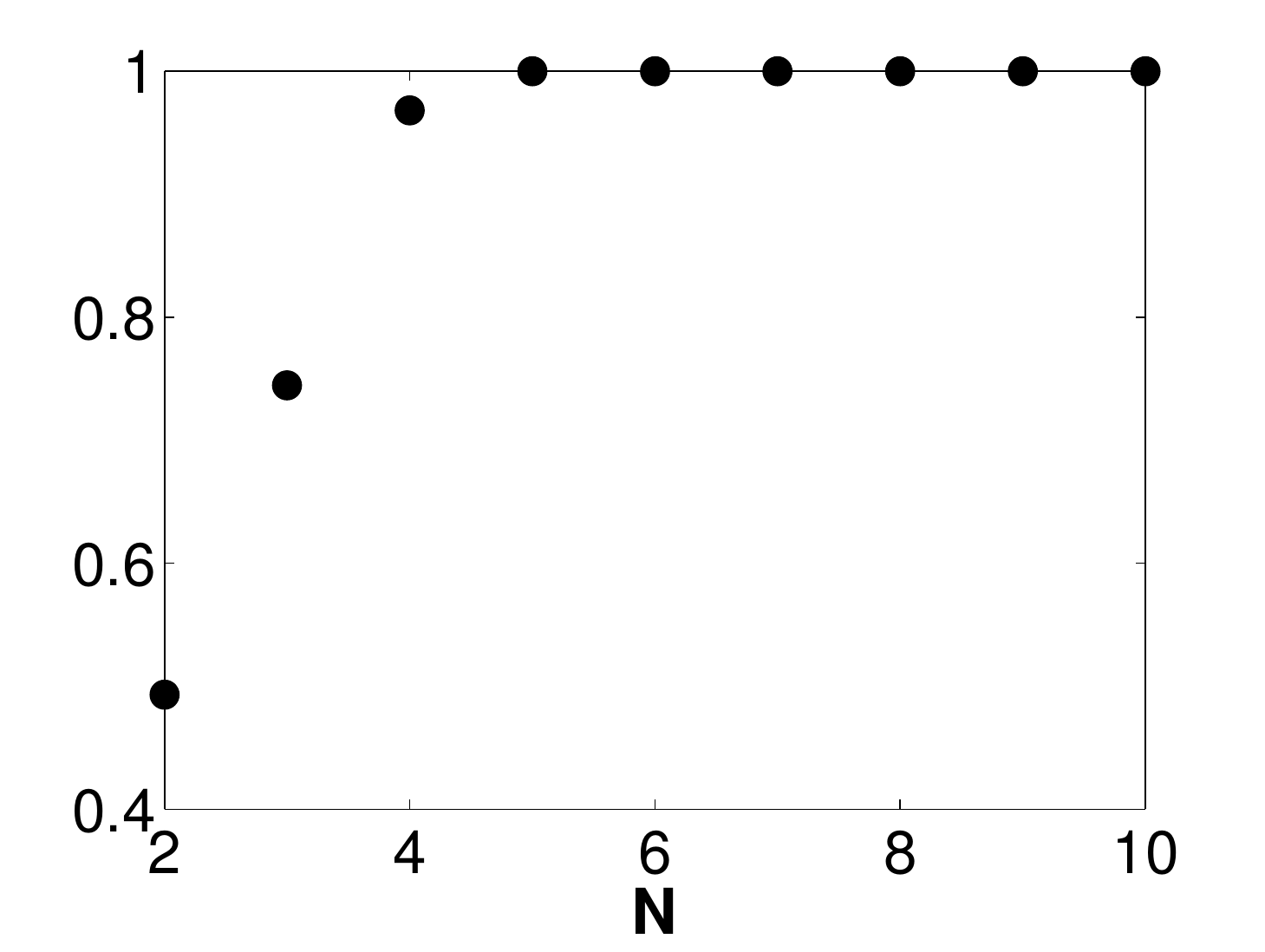}
\end{minipage}
\\
\begin{minipage}[c]{0.47 \textwidth}
\centering
\includegraphics[scale=0.26]{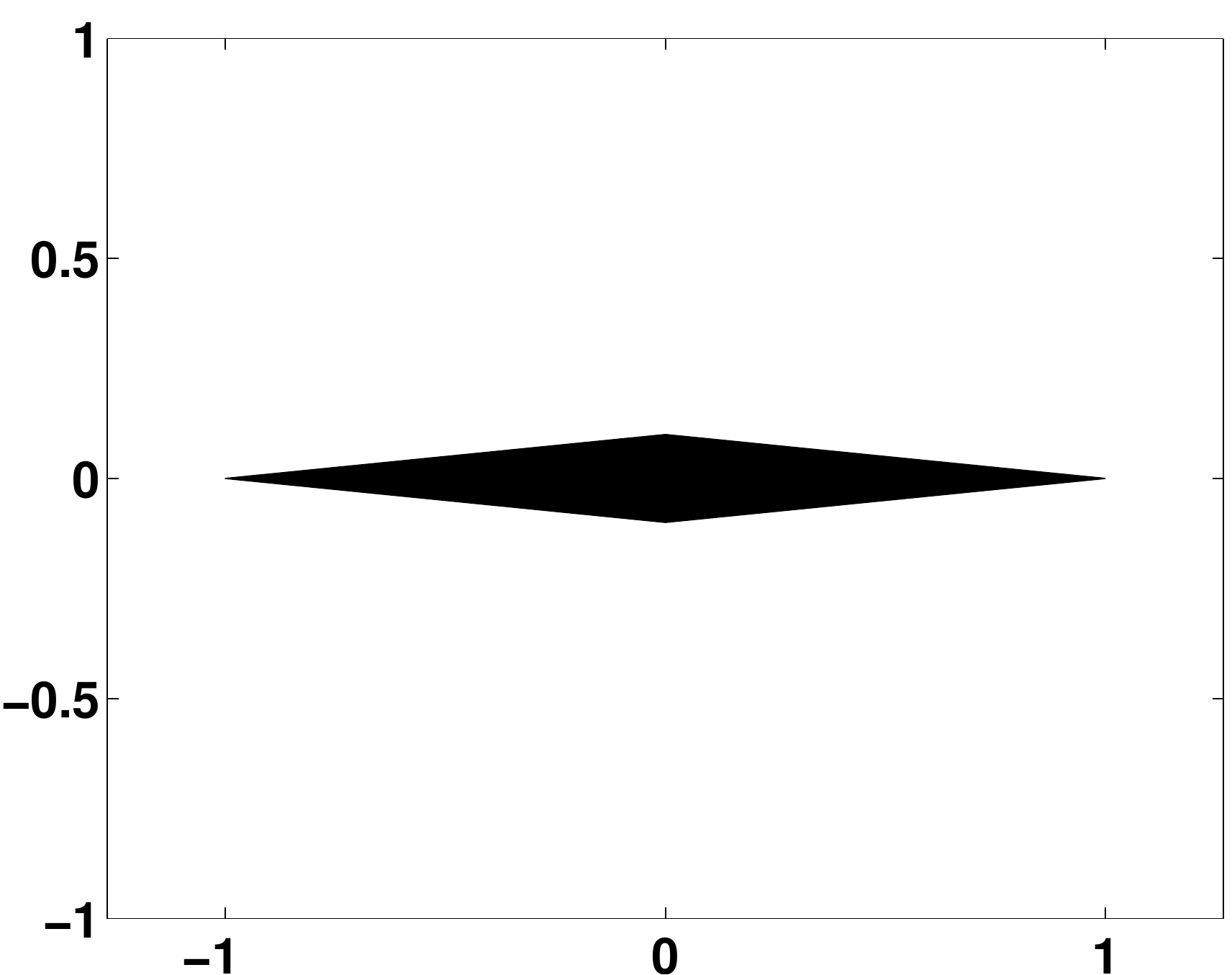}
\end{minipage}
\begin{minipage}[c]{0.47 \textwidth}
\centering
\includegraphics[scale=0.35]{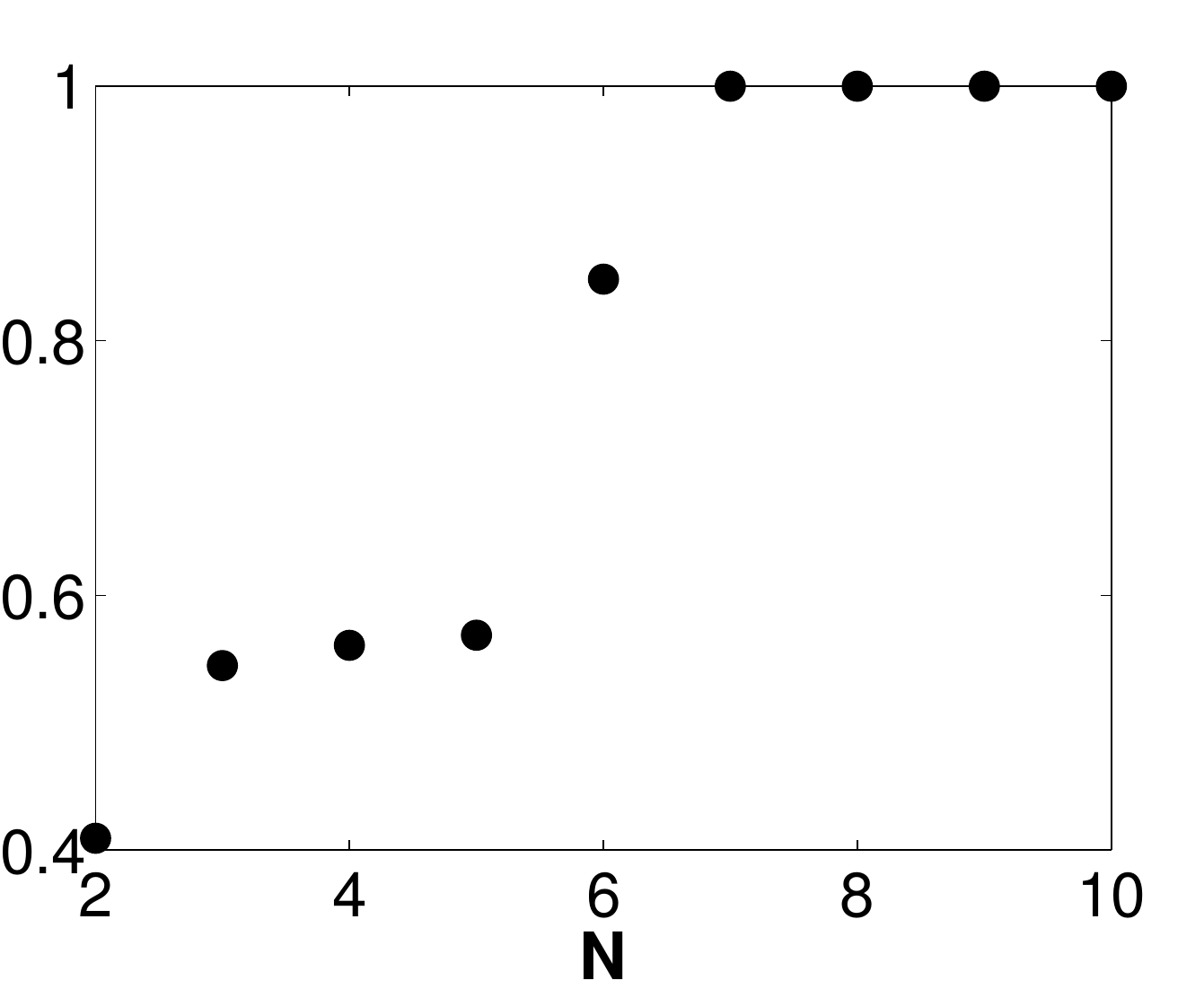}
\end{minipage}
\\
\begin{minipage}[c]{0.47 \textwidth}
\centering
\includegraphics[scale=0.26]{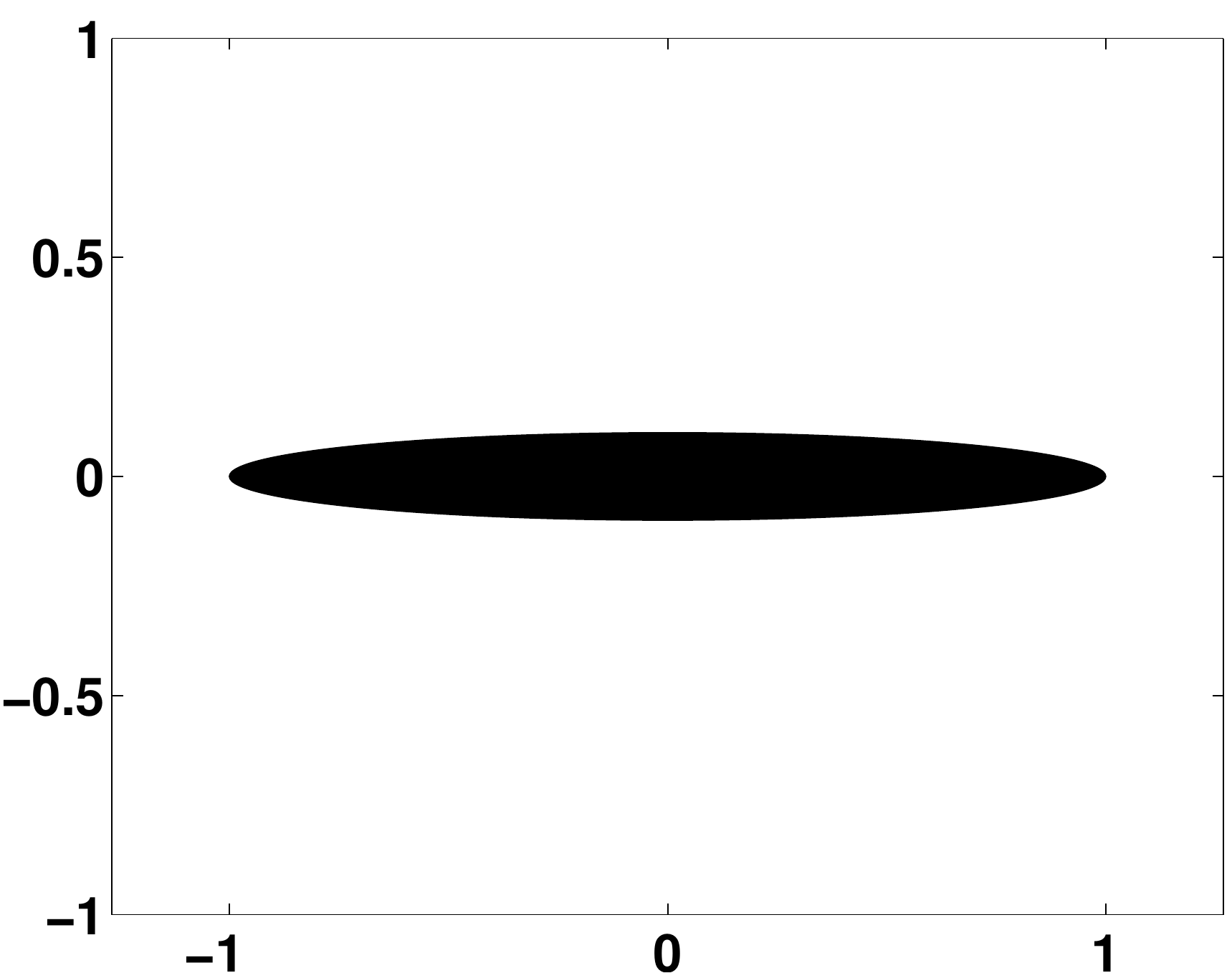}
\end{minipage}
\begin{minipage}[c]{0.47 \textwidth}
\centering
\includegraphics[scale=0.35]{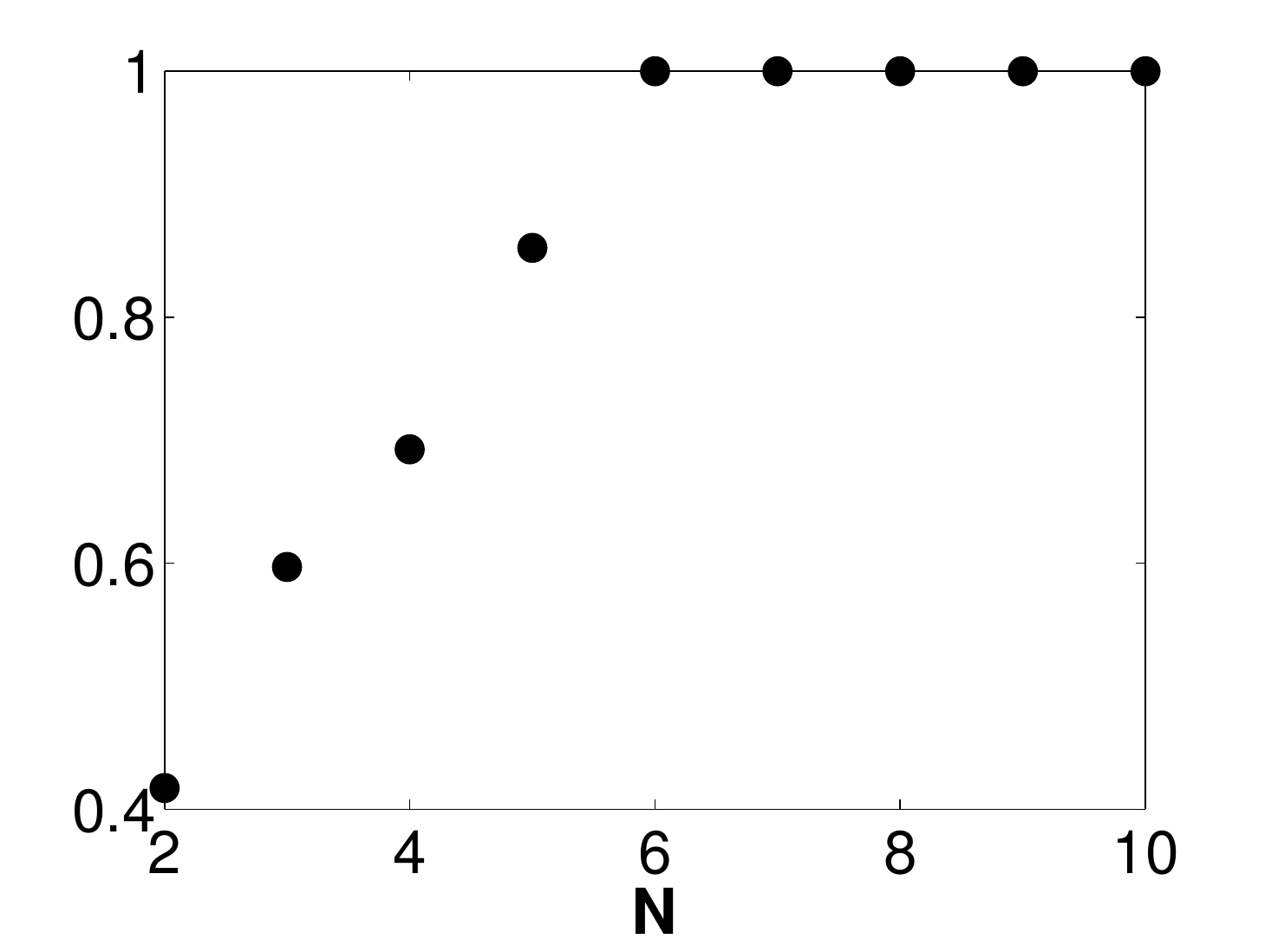}
\end{minipage}
\caption{The probability that $ S_N(K_i,\phi_0) $ is positive definite for $ i=1,2,3 $, when $ \phi_0 $ is uniformly distributed on $ [0,\frac{\pi}{N}] $ plotted against the number of equidistant lines $ N $.}
\label{Plot of prob for posdef}
\end{figure}  

To apply the estimator \eqref{Estimator} it is only required to observe whether the test line hits or misses the convex body $K$. The estimator \eqref{projestimator} requires more sophisticated information in terms of the projection function. In the following example the coefficient of variation of versions of the estimators \eqref{IURn3} and \eqref{EstimatorN} are estimated and compared in a three-dimensional set-up.  

\begin{example} \label{3dimex}\rm 
Let $K_{l}'$ be the prolate spheroid in $\R^3$ with main axis parallel to the standard basis vectors $e_1,e_2$ and $e_3$, and corresponding lengths of semi-axes $\lambda_1=\lambda_2=1$ and $\lambda_3=l$.
For $l=1, \dots, 5$, let $K_l$ denote the ellipsoid obtained by rotating $K_l'$ first around $e_1$ with an angle $\frac{3\pi}{16}$, and then around $e_2$ with an angle $\frac{5\pi}{16}$. Note, that the eccentricity of $K_l$ increases with $l$. In this example, based on simulations, we estimate and compare the coefficient of variation \textit{(CV)} of the developed estimators of $\Phi_{2,0,2}(K_l)$ for $l=1, \dots, 5$.

Formula \eqref{IURn3} provides an unbiased estimator of the tensor $\Phi_{2,0,2}(K_l)$ for $l=1, \dots, 5$. The estimator is based on one \textit{IUR} line hitting a reference set $A$, and can in a natural way be extended to an estimator based on three orthogonal \textit{IUR} lines hitting $A$. We estimate the variance of both estimators. Let, for $l=1, \dots,5$, the reference set $A_l$ be a ball of radius $R_l>0$. The choice of the reference set influences the variance of the estimator. In order to minimize this effect in the comparison of the CV's, the radii of the reference sets are chosen such that the probability that a test line hits $K_l$ is constant for $l=1, \dots, 5$. By formula \eqref{IURdistribution} the probability that an \textit{IUR} line hitting $A_l$ hits $K_l$ is $\frac{V_{2}(K_l)}{V_2(A_l)}$. The radius is chosen, such that this probability is $\frac{1}{7}$.
We further estimate the variance of the projection estimator \eqref{EstimatorN} based on one isotropic line and on three orthogonal isotropic lines.

As $\Phi_{2,0,2}(K_l)$ is a tensor of rank 2, it can be identified with the symmetric $3 \times 3$ matrix $\{\Phi_{2,0,2}(K_l)(e_i,e_j)\}_{i,j=1}^3$. Thus, in order to estimate $\Phi_{2,0,2}(K_l)$, the matrix $\{\hat{\Phi}_{2,0,2}(K_l)(e_i,e_j)\}_{i,j=1}^3$ is calculated. Here, $\hat{\Phi}_{2,0,2}(K_l)$ refers to any of the four estimators described above. Due to symmetry, there are six different components of the matrices. 

The estimates of the variances are based on 1500-10000 estimates of the tensor, depending on the choice of the estimator and the eccentricity of $K_l$. Using the estimates of the variances, we estimate the absolute value of the CV's by
\begin{equation*}
\widehat{CV}_{ij}=\frac{\sqrt{\widehat{\text{Var}}(\hat{\Phi}_{2,0,2}(K_l)(e_i,e_j))}}{|\Phi_{2,0,2}(K_l)(e_i,e_j)|},
\end{equation*}
for $i,j=1,2,3$ and $l=1, \dots, 5$. As $K_l$ is an ellipsoid, the tensor $\Phi_{2,0,2}(K_l)$ can be calculated numerically. The CV's of the four estimators are plotted in Figure \ref{4variances} for each of the six different components of the associated matrix. As $K_1$ is a ball, the off-diagonal elements of the matrix associated with $\Phi_{2,0,2}(K_1)$ are zero. Thus, the CV is in this case calculated only for the estimators of the diagonal-elements.

The projection estimators give, as expected, smaller CV's, than the estimators based on the Euler characteristic of the intersection between the test lines and the ellipsoid. For the estimators based on one test line the CV of the projection estimator is typically around $38\%$ of the corresponding estimator \eqref{IURn3}. For the estimators based on three orthogonal test lines, the CV of the projection estimator is typically $9\%$ of the estimator \eqref{IURn3}, when $l=2,\dots,5$. Due to the fact that $K_1$ is a ball, the variance of the projection estimator based on three orthogonal lines is 0, when $l=1$.

It is interesting to compare the increase of efficiency when using the estimator based on three orthogonal test lines instead of three i.i.d. test lines. The CV of an estimator based on three i.i.d. test lines is $\frac{1}{\sqrt{3}}$ of the CV of the estimator \eqref{IURn3}, (the ``$+$'' signs in Figure \ref{4variances}). The CV, when using three orthogonal test lines, is typically around $92\%$ of that CV. For $l=2,\dots, 5$, the CV's of the projection estimator based on three orthogonal lines, are typically $20\%$ of the CV, when using three i.i.d lines, indicating that spatial random systematic sampling increases precision without extra workload.

The CV's of the estimators of the diagonal-elements $\Phi_{2,0,2}(K_l)(e_i,e_i)$ are almost constant in $l$. Hence the eccentricity of $K_l$ does not affect the CV's for these choices of $l$. There is a decreasing tendency of the CV's of the estimators of the off-diagonal elements. This might be explained by the fact that the true value of $\Phi_{2,0,2}(K_l)(e_i,e_j)$ is close to zero, when $i \neq j$ and $l$ is small.
\end{example}

\begin{figure}
\begin{minipage}[c]{0.47 \textwidth}
\centering
\includegraphics[scale=0.30]{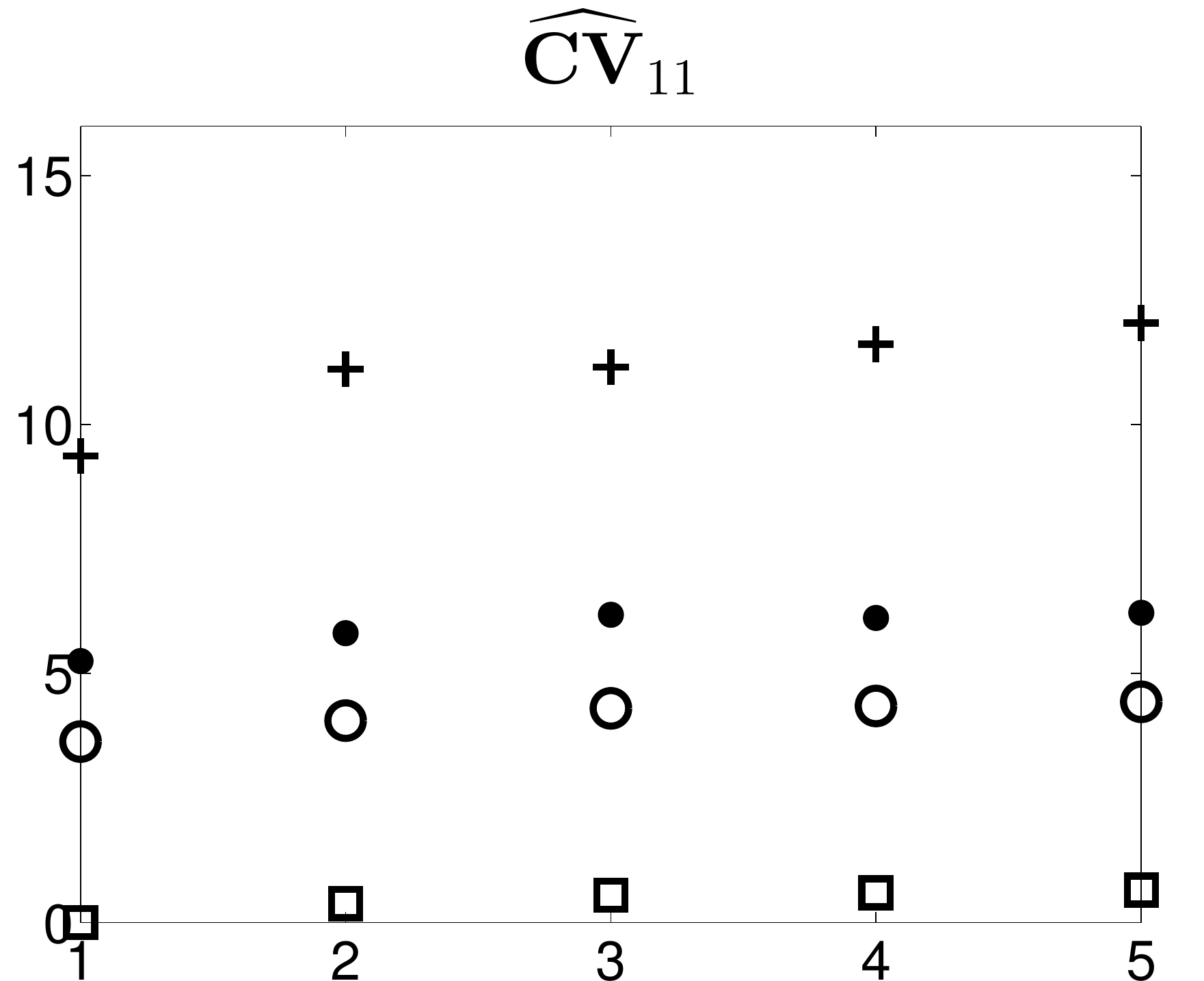}
\end{minipage}
\hfill
\begin{minipage}[c]{0.47 \textwidth}
\centering
\includegraphics[scale=0.30]{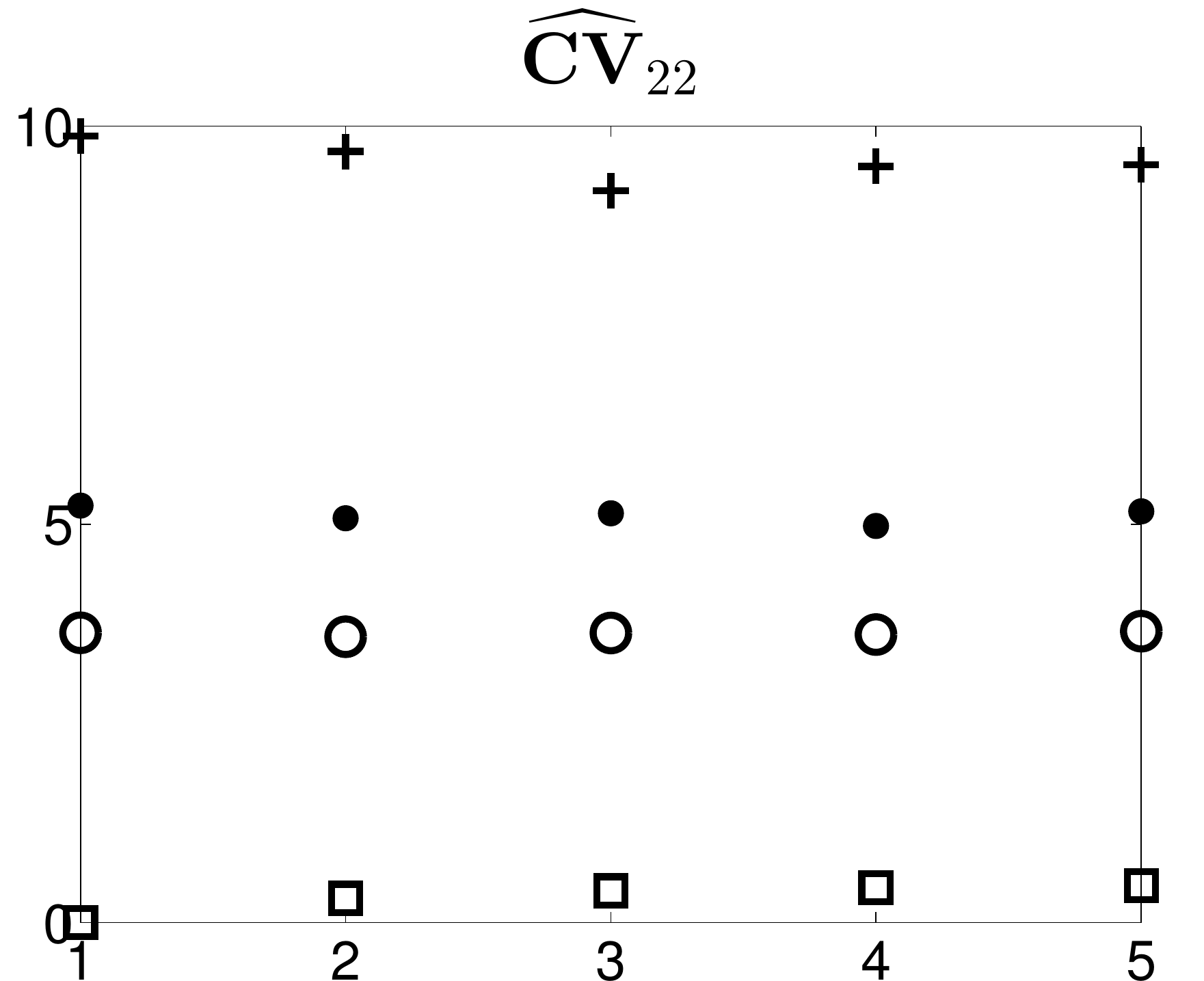}
\end{minipage}
\vspace{4mm}
\\
\begin{minipage}[c]{0.47 \textwidth}
\centering
\includegraphics[scale=0.30]{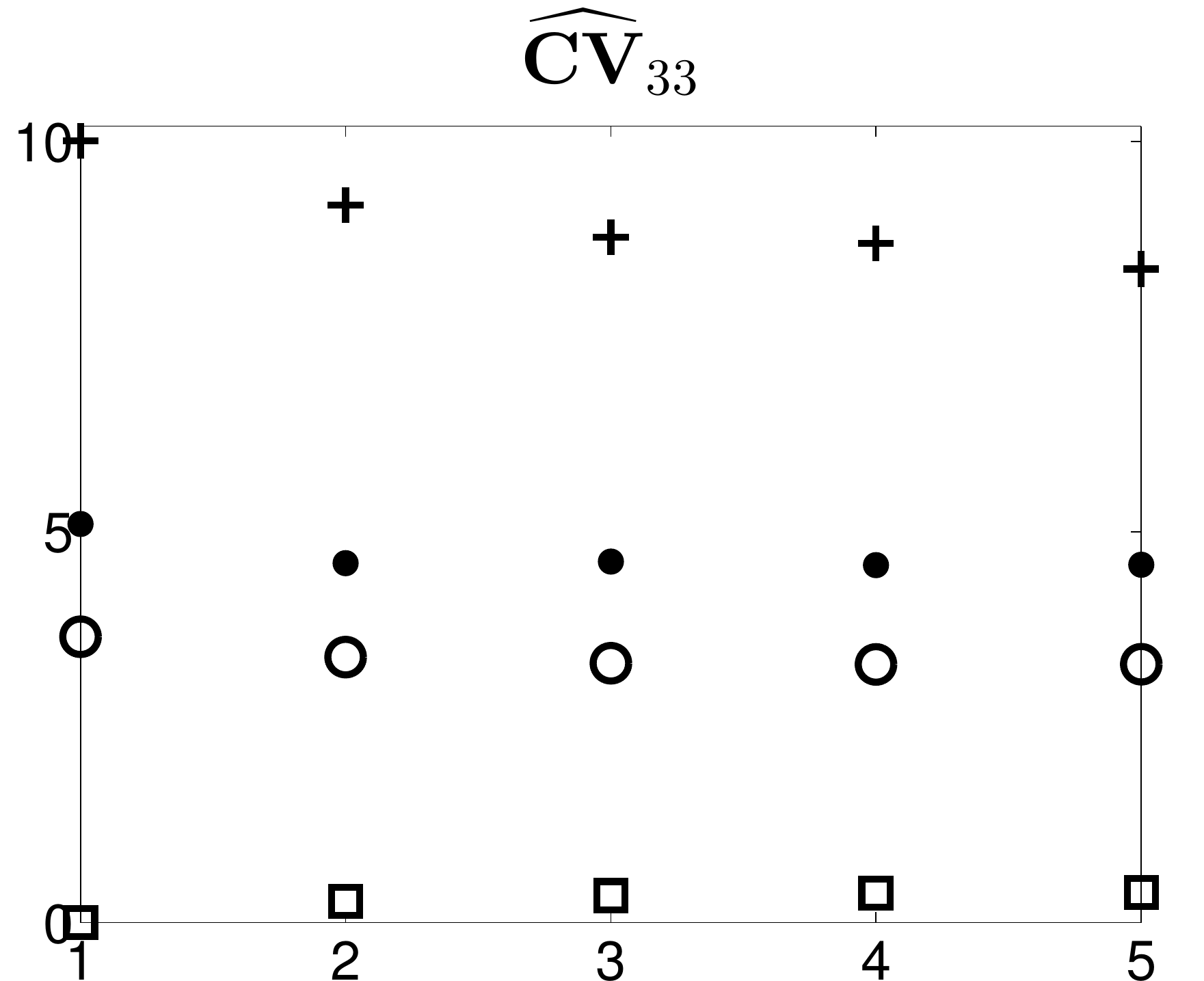}
\end{minipage}
\hfill
\begin{minipage}[c]{0.47 \textwidth}
\centering
\includegraphics[scale=0.30]{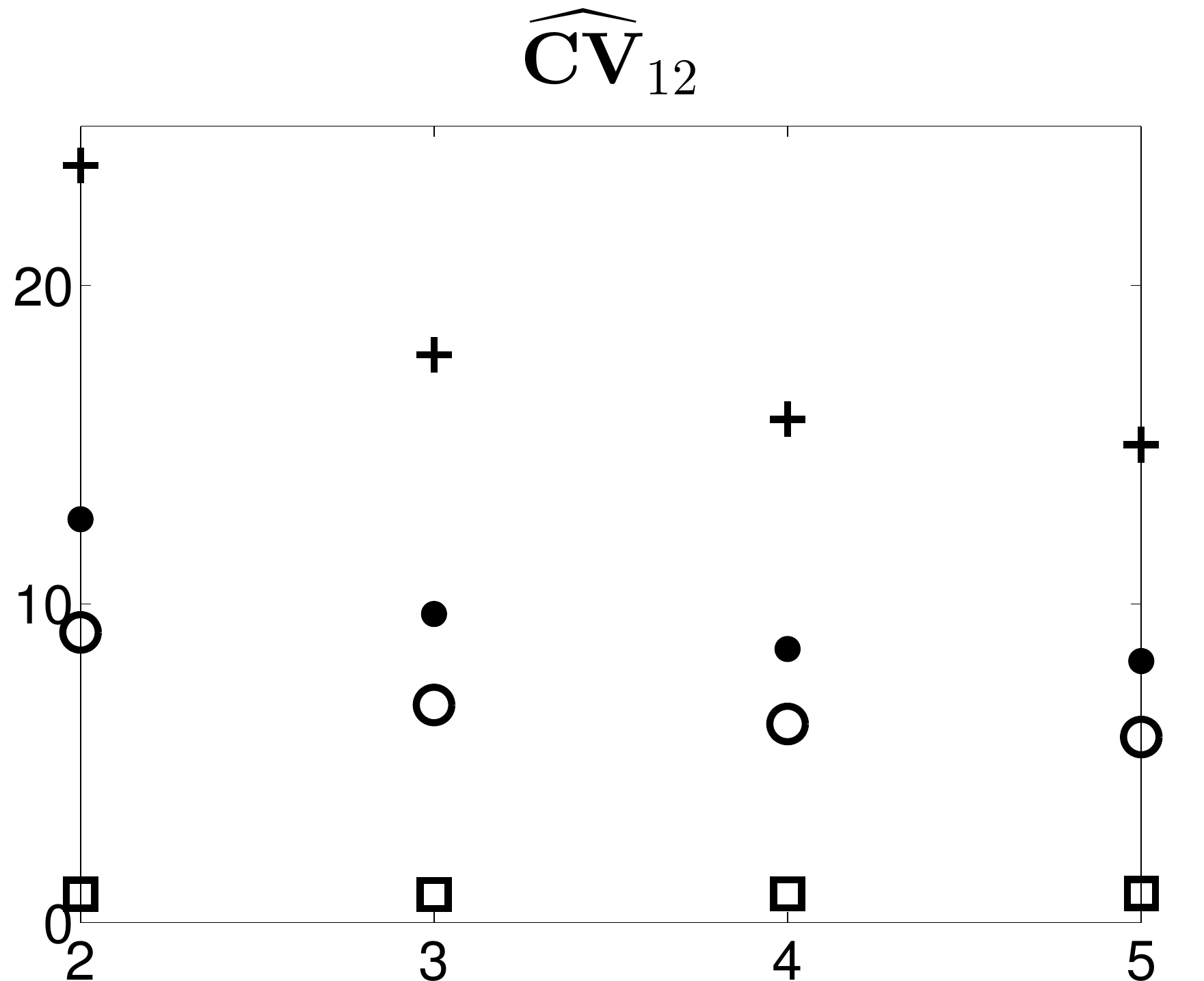}
\end{minipage}
\vspace{4mm}
\\
\begin{minipage}[c]{0.47 \textwidth}
\centering
\includegraphics[scale=0.30]{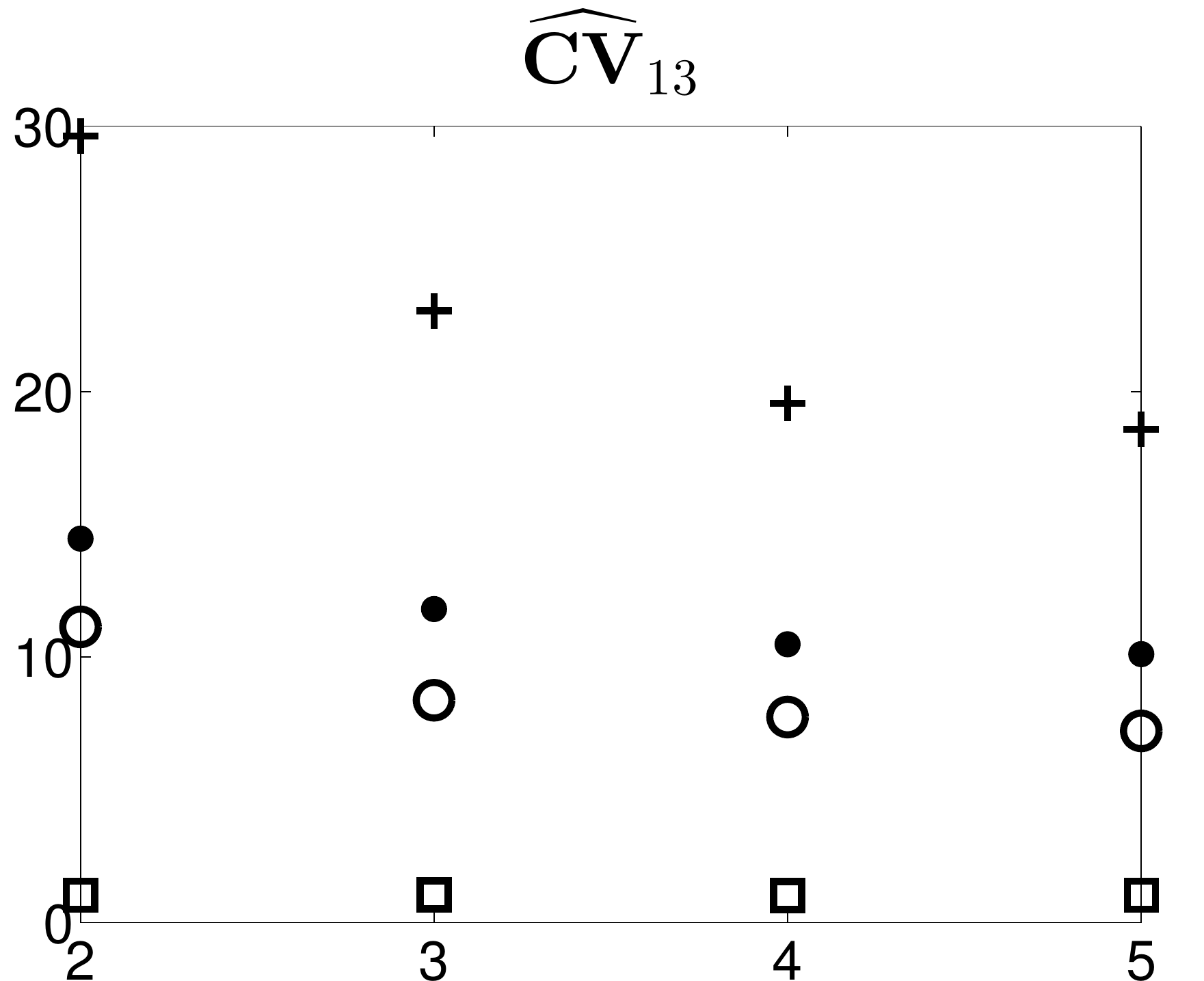}
\end{minipage}
\hfill
\begin{minipage}[c]{0.47 \textwidth}
\centering
\includegraphics[scale=0.30]{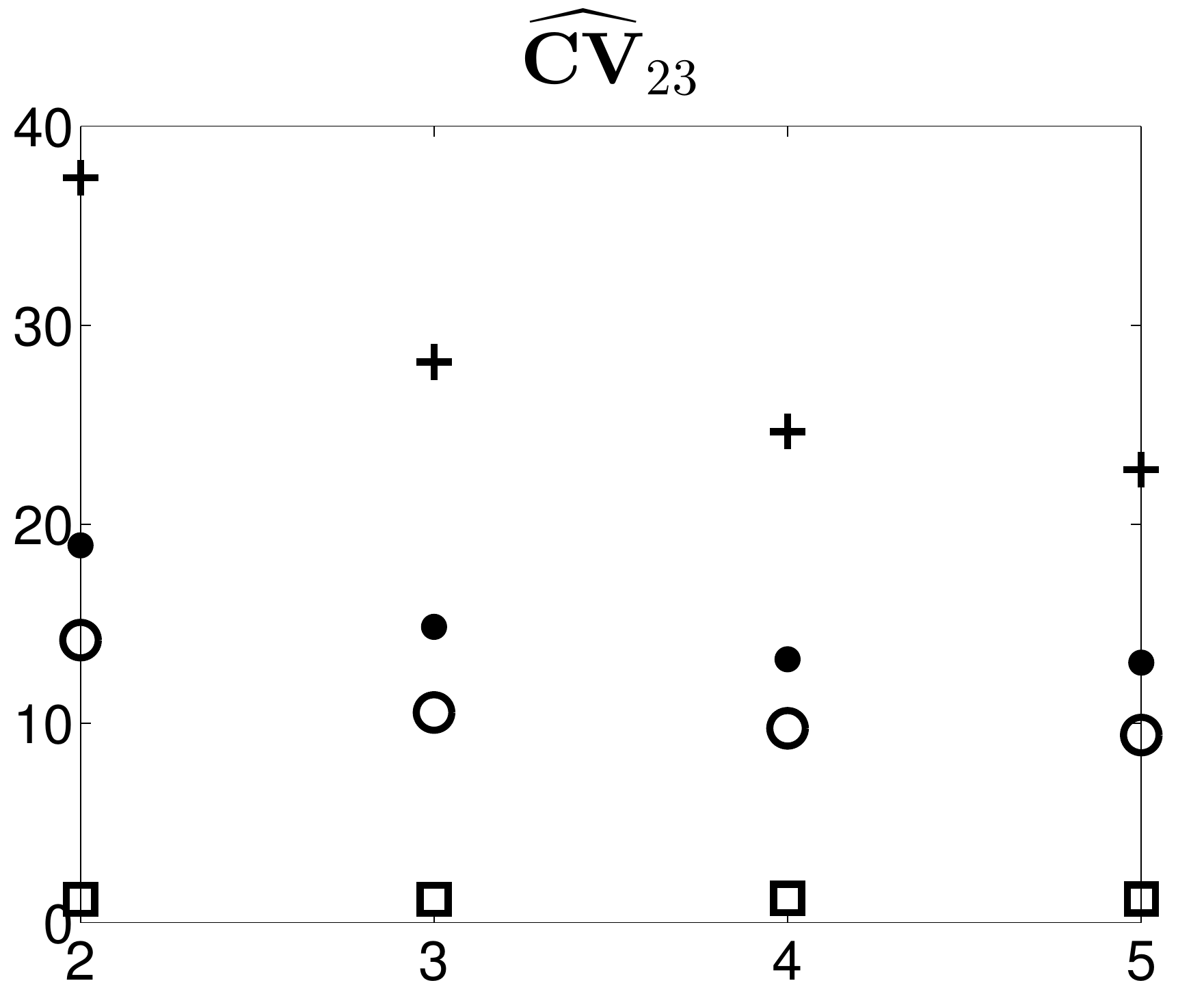}
\end{minipage}
\caption{The estimated coefficients of variation $\widehat{CV}_{ij}$ of the estimators of $\Phi_{2,0,2}(K_l)(e_i,e_j)$ plotted against $l$ for $\nobreak{i,j \in \{1,2,3\}}$. The CV of the estimator \eqref{IURn3} based on \textit{one} line is designated by ``$+$'', while the CV of the corresponding estimator based on \textit{three} lines is designated by ``$\bullet$''. The CV of the projection estimator is designated by ``$\circ$'' and ``$\square$'' for one and three lines, respectively.}
\label{4variances}
\end{figure}

The above example shows that only the projection estimator based on three orthogonal test lines has a satisfactory precision. For $l=2$ the CV's are approximately $\frac{1}{3}$ for the diagonal-elements and $1$ for the off-diagonal elements. Further variance reduction of the projection estimator can be obtained by using a larger number of systematic random test directions. For $n=2$ this can be effectuated by choosing equidistant points on the upper half circle; see \eqref{syst}. For $n=3$ the directions must be chosen evenly spread; see \cite{Leopardi2006} for details. 

If the projections are not available or too costly to obtain, systematic sampling in the position of the test lines with given orientations can be applied. In $\R^2$ this corresponds to a Steinhaus-type estimation procedure (see e.g. \cite{Jensen2005}). In $\R^3$ the fakir method described in \cite{L.1998} can be applied.

\subsection{Estimation based on vertical sections}\label{sec VUR}
In the previous section we constructed an estimator of $\T[s]$ based on isotropic uniform random lines. As described in \cite{Markus}, it is sometimes inconvenient or impossible to use the \textit{IUR} design in applications. For instance, in biology when analysing skin tissue, it might be necessary to use sample sections, which are normal to the surface of the skin, so that the different layers become clearly distinguishable in the sample. 
Instead of using \textit{IUR} lines it is then a possibility to use vertical sections introduced by Baddeley in \cite{Baddeley1983}. The idea is to fix a direction (the normal of the skin surface), and only consider flats parallel to this direction. After randomly selecting a flat among these flats, we want to pick a line in the flat in such a way that this line is an isotropic uniform random line in $\R^n$. Like in the classical formulae for vertical sections, we select this line in a non-uniform way according to a Blaschke-Petkantschin formula (see \eqref{B-P}). This idea is used to deduce estimators of $\T[s]$ from the Crofton formula \eqref{thm2}. 

When introducing the concept of vertical sections we use the following notation. For $ 0 \leq k \leq n $ and $ L \in \cL^n_k $, let
\begin{equation*}
\cLL=
\begin{cases}
\{M \in \cL_r^n \mid M \subseteq L \} & \text{if } 0 \leq r \leq k \\
\{M \in \cL_r^n \mid L \subseteq M \} & \text{if } k < r \leq n,
\end{cases}
\end{equation*}
and, similarly, let $\cE^E_r= \{F \in \En_r \mid F \subseteq E \} $ for $ E \in \cE^n_k  $ and $ 0 \leq r \leq k$. Let $  \nu^L_r$ denote the unique rotation invariant probability measure on $ \cL^L_r $, and let $ \mu^E_r $ denote the motion invariant measure on $ \cE^E_r $ normalized as in \cite{Weil}. 

Let $L_{0} \in \cLo$ be fixed. This is the \textit{vertical axis} (the normal of the skin surface in the example above). Let the reference set $A \subseteq \R^n$ be a compact set. 

\begin{definition}\label{VUR}
Let $1 < k < n$. A random $k$-flat $H$ in $\R^n$ is called \textbf{a vertical uniform random (VUR) $k$-flat hitting $A$} if the distribution of $H$ is given by 
\begin{equation*}
P(H \in \A) = c_2(A) \int_{\cLLn[k]} \int_{A \mid L^\perp} \1(L+x \in \A)\, \lambda_{\Lp}(dx) \, \nu_k^{L_0}(dL)
\end{equation*}
for $\A \in \B(\En_k)$, where $c_2(A)>0$ is a normalizing constant.
\end{definition}
The distribution of $H$ is concentrated on the set 
\begin{equation*}
\{E \in \En_{k} \mid E \cap A \neq \emptyset, L_0 \subseteq \Eorigo \}.
\end{equation*}
When the reference set $A$ is a convex body, the normalizing constant becomes
\begin{equation*}
c_2(A)=\binom{n-1}{k-1}\frac{\kappa_{n-1}}{\kappa_{k-1}\kappa_{n-k}}\frac{1}{V_{n-k}(A\vert L_0^\perp)}.
\end{equation*}
(Note that we do not indicate the dependence of $c_2(A)$ on $k$ by our notation.)  
This can be shown, e.g., by using the definition of $\nu_k^{L_0}$ together with \cite[(13.13)]{Weil}, Crofton's formula in the space $L_0^{\perp}$, and 
the equality
\begin{equation}\label{v0 lighed}
\1_{A \vert L^\perp}(x)=V_0((A\vert L_0^\perp) \cap (x+L))
\end{equation}
for $A \in \K^n$, $L \in \cLLn[k]$ and $x \in L^\perp$. For later use note that when $k=2$ the normalizing constant becomes 
\begin{equation} \label{normalizing constant}
c_2(A)=\frac{\omega_{n-1}}{2 \kappa_{n-2}V_{n-2}(A \vert L_0^\perp)}.
\end{equation}

To construct an estimator, which is based on a vertical uniform random flat, we cannot use Theorem \ref{crofton2} immediately as in the \textit{IUR}-case. It is necessary to use a Blaschke-Petkantschin formula first; see \cite[(2.8)]{Markus}. It states that for a fixed $L_0 \in \cLo$ and an integrable function $f \colon \cE^n_1 \rightarrow \R$, we have
\begin{align} \label{B-P}
\int_{\En_1} f(E) \, \mu_1^n(dE)& =  \frac{\pi \omega_{n-1}}{\omega_n} \int _{\cL^{L_0}_2} \int_{M^\perp} \int _{\cE_1^{M+x}} 
f(E) \sin (\angle(E,L_0))^{n-2} \nonumber
\\
&\qquad\qquad\times \mu_1^{M+x}(dE) \, \lambda_{M^{\perp}}(dx) \, \nu_2^{L_0} (dM),
\end{align}
where $\angle(E_1,E_2)$  is the (smaller) angle between $\pi(E_1)$ and $ \pi(E_2)$ for two lines $E_1,E_2 \in \cE^n_1$.
For $K \in \K^n$ and even $s \in \N_0$, equation \eqref{B-P} can be applied coordinate-wise to the mapping $E \mapsto \MT[s]$ and combined with the Crofton formula in Theorem \ref{crofton-like}. The result is an integral formula for two-dimensional vertical sections.

\begin{theorem}\label{VUR crofton}
Let $L_0 \in \cLo$ be fixed. If $K \in \K^n$ and $s \in \N_0$ is even, then 
\begin{align}\label{VUR crofton formel}
&\int _{\cL^{L_0}_2} \int_{M^\perp} \int _{\cE_1^{M+x}} \Phi^{(E)}_{0,0,s}(K \cap E) \sin(\angle(E,L_0))^{n-2} \, \mu_{1}^{M+x} (dE) \, \lambda_{M^{\perp}} (dx) \, \nu_{2}^{L_0} (dM)
\nonumber \\
&\qquad\qquad = \frac{2 \omega_{n+s+1}}{ s! \pi^2 \omega_{n-1} \omega_{s+1}^2 } \sum_{k=0}^{\frac{s}{2}} c_k^{(\frac{s}{2})} Q^{\frac{s}{2}-k}  \Phi_{n-1,0,2k}(K),
\end{align}
where the constants $c_k^{(m)}$ are given in Theorem $\ref{crofton-like}$. 
For odd $s \in \N_0$ the integral on the left-hand side is zero.
\end{theorem}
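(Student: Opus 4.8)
The plan is to derive \eqref{VUR crofton formel} by combining the Blaschke--Petkantschin formula \eqref{B-P} with the linear Crofton formula of Theorem \ref{crofton-like}. First I would fix $K \in \K^n$ and even $s \in \N_0$, and define the function $f \colon \cE^n_1 \to \Tensor^s$ by $f(E) := \Phi^{(E)}_{0,0,s}(K \cap E)$. This function is measurable, bounded on the set of lines meeting $K$, and vanishes off that set, so it is $\mu_1^n$-integrable; applying \eqref{B-P} coordinate-wise (that is, to each component of $f$ with respect to a fixed orthonormal basis of $\Tensor^s$, which is legitimate since \eqref{B-P} is a scalar identity) yields
\begin{align*}
&\int_{\En_1} \Phi^{(E)}_{0,0,s}(K \cap E)\, \mu_1^n(dE) \\
&\qquad = \frac{\pi \omega_{n-1}}{\omega_n} \int_{\cL^{L_0}_2} \int_{M^\perp} \int_{\cE_1^{M+x}} \Phi^{(E)}_{0,0,s}(K \cap E)\, \sin(\angle(E,L_0))^{n-2}\, \mu_1^{M+x}(dE)\, \lambda_{M^\perp}(dx)\, \nu_2^{L_0}(dM).
\end{align*}
Rearranging, the triple integral on the left-hand side of \eqref{VUR crofton formel} equals $\frac{\omega_n}{\pi \omega_{n-1}}$ times $\int_{\En_1} \Phi^{(E)}_{0,0,s}(K \cap E)\, \mu_1^n(dE)$.

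Next I would invoke Theorem \ref{crofton-like} to replace this last Crofton integral by its closed form $\frac{2\omega_{n+s+1}}{\pi s! \omega_{s+1}^2 \omega_n} \sum_{k=0}^{s/2} c_k^{(s/2)} Q^{s/2-k} \Phi_{n-1,0,2k}(K)$. Multiplying by the prefactor $\frac{\omega_n}{\pi \omega_{n-1}}$ gives
\[
\frac{\omega_n}{\pi \omega_{n-1}} \cdot \frac{2\omega_{n+s+1}}{\pi s! \omega_{s+1}^2 \omega_n} \sum_{k=0}^{s/2} c_k^{(s/2)} Q^{s/2-k} \Phi_{n-1,0,2k}(K) = \frac{2\omega_{n+s+1}}{s! \pi^2 \omega_{n-1} \omega_{s+1}^2} \sum_{k=0}^{s/2} c_k^{(s/2)} Q^{s/2-k} \Phi_{n-1,0,2k}(K),
\]
which is exactly the right-hand side of \eqref{VUR crofton formel}. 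The odd case is immediate: for odd $s$ the integrand $\Phi^{(E)}_{0,0,s}(K\cap E)$ vanishes identically by \eqref{Simpel form for T}, so both sides are zero.

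The only genuine point requiring care is the justification of the coordinate-wise application of \eqref{B-P}, i.e.\ that the $\Tensor^s$-valued function $E \mapsto \Phi^{(E)}_{0,0,s}(K\cap E)$ is integrable with respect to $\mu_1^n$ so that Fubini-type manipulations and the scalar Blaschke--Petkantschin identity apply to each component. By \eqref{Simpel form for T} we have $\Phi^{(E)}_{0,0,s}(K\cap E) = \frac{2}{s!\omega_{s+1}} Q(\Eorigo)^{s/2} V_0(K\cap E)$, whose components are bounded (since $\|Q(\Eorigo)\|$ is bounded uniformly in $\Eorigo$ and $V_0(K\cap E) \in \{0,1\}$) and supported on the set $\{E : E \cap K \neq \emptyset\}$, which has finite $\mu_1^n$-measure, namely $\kappa_{n-1} V_{n-1}(K) \cdot 2/\omega_n$ up to the normalization; hence each component is in $L^1(\mu_1^n)$, and the same estimate shows the inner integrals in \eqref{B-P} are finite, so no convergence issues arise. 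I expect this verification to be routine; the substance of the theorem is the two integral-geometric inputs \eqref{B-P} and Theorem \ref{crofton-like}, and the main ``work'' is simply bookkeeping of the constants $\frac{\pi\omega_{n-1}}{\omega_n}$ and $\frac{2\omega_{n+s+1}}{\pi s!\omega_{s+1}^2\omega_n}$, which multiply to the stated $\frac{2\omega_{n+s+1}}{s!\pi^2\omega_{n-1}\omega_{s+1}^2}$.
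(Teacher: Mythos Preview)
Your proposal is correct and follows precisely the route indicated in the paper: apply the Blaschke--Petkantschin identity \eqref{B-P} coordinate-wise to $E\mapsto \Phi^{(E)}_{0,0,s}(K\cap E)$ and then invoke Theorem~\ref{crofton-like}, with the odd case handled by \eqref{Simpel form for T}. Your additional care in verifying $\mu_1^n$-integrability of the components is a welcome detail that the paper leaves implicit.
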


If Theorem \ref{crofton-like} is replaced by Theorem \ref{crofton2} in the above line of arguments, we obtain an explicit measurement function for vertical sections leading to one single tensor.

\begin{theorem}\label{VURcrofton2}
Let $L_0 \in \cLo$ be fixed. If $K \in \K^n$ and $s\in \N_0$ is even, then
\begin{align*}
\frac{\omega_n}{\pi \omega_{n-1}}\T[s]&=\int _{\cL^{L_0}_2} \int_{M^\perp} \int _{\cE_1^{M+x}} G_s(\Eorigo) V_0(K \cap E)
\\
& \qquad   \times  \sin(\angle(E,L_0))^{n-2} \, \mu_1^{M+x}(dE) \, \lambda_{M^{\perp}}(dx) \, \nu_2^{L_0}(dM),
\end{align*}
where $G_s$ is given in Theorem $\ref{crofton2}$.
\end{theorem}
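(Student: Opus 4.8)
The plan is to apply the Blaschke--Petkantschin formula \eqref{B-P} coordinate-wise, exactly as outlined in the discussion preceding Theorem \ref{VUR crofton}, but now with the measurement function $G_s(\Eorigo)V_0(K\cap E)$ from Theorem \ref{crofton2} in place of the relative tensor $\Phi^{(E)}_{0,0,s}(K\cap E)$. Concretely, I would start from the integral formula \eqref{thm2}, which states that
\begin{equation*}
\int_{\En_1} G_s(\Eorigo)V_0(K\cap E)\,\mu_1^n(dE)=\T[s],
\end{equation*}
and then substitute the right-hand side of \eqref{B-P} for the left-hand side, applied to the function $f(E):=G_s(\Eorigo)V_0(K\cap E)$. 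Since $G_s$ takes values in $\Tensor^s$, the formula \eqref{B-P} is to be understood as applied separately to each of the (finitely many) coordinate functions of $f$ with respect to a fixed orthonormal basis of $\Tensor^s$; linearity of the integral then reassembles these into the tensor-valued identity. The constant $\pi\omega_{n-1}/\omega_n$ in \eqref{B-P} moves to the other side to produce the prefactor $\omega_n/(\pi\omega_{n-1})$ in front of $\T[s]$.

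The only point requiring a little care is the integrability hypothesis needed to invoke \eqref{B-P}: one must check that $E\mapsto G_s(\Eorigo)V_0(K\cap E)$ is $\mu_1^n$-integrable (coordinate-wise). This is immediate: $G_s(\Eorigo)$ is a bounded function of the direction $\pi(E)\in\cLo$ (it is a fixed polynomial in $Q$ and $Q(\Eorigo)$, and $\|Q(\Eorigo)\|$ is bounded on the compact Grassmannian $\cLo$), while $V_0(K\cap E)\in\{0,1\}$ vanishes unless $E$ meets $K$, and $\mu_1^n(\{E\in\En_1\mid E\cap K\neq\emptyset\})<\infty$ because $K$ is compact. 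Hence each coordinate function is bounded and supported on a set of finite measure, so it is integrable, and \eqref{B-P} applies.

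After the substitution, the left-hand side of \eqref{B-P} becomes the nested integral over $\cL_2^{L_0}$, then $M^\perp$, then $\cE_1^{M+x}$, of $G_s(\Eorigo)V_0(K\cap E)\sin(\angle(E,L_0))^{n-2}$ against $\mu_1^{M+x}(dE)\,\lambda_{M^\perp}(dx)\,\nu_2^{L_0}(dM)$, which is precisely the triple integral appearing in the statement of Theorem \ref{VURcrofton2}. Equating this with $(\pi\omega_{n-1}/\omega_n)^{-1}\T[s]=(\omega_n/\pi\omega_{n-1})\T[s]$ via \eqref{thm2} yields the claimed identity. For odd $s$ there is nothing new to prove here since we only assert the formula for even $s$, but one could note in passing that for odd $s$ the relative tensor, and hence any such construction, degenerates as already recorded. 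I do not anticipate any genuine obstacle: the proof is a direct concatenation of \eqref{thm2} and \eqref{B-P}, and the only bookkeeping is the routine coordinate-wise reading of the vector-valued Blaschke--Petkantschin formula together with the trivial integrability check above.
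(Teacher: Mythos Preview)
Your proposal is correct and follows exactly the approach the paper indicates: the paper states that Theorem~\ref{VURcrofton2} is obtained by replacing Theorem~\ref{crofton-like} with Theorem~\ref{crofton2} in the argument for Theorem~\ref{VUR crofton}, i.e., by applying the Blaschke--Petkantschin formula~\eqref{B-P} coordinate-wise to the measurement function $E\mapsto G_s(\Eorigo)V_0(K\cap E)$ and combining with~\eqref{thm2}. Your explicit verification of the integrability hypothesis is a welcome addition that the paper leaves implicit.
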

Let $s \in \N_0$ be even and assume that $K \in \K^n$ is contained in a reference set $A \in \K^n$. Using Theorem \ref{VURcrofton2} we are able to construct unbiased estimators of the tensors $\T[s]$ of $K$ based on a vertical uniform random 2-flat.  If $H$ is an \textit{VUR} 2-flat hitting $A$ with vertical direction $L_0 \in \cLo$, then it follows from Theorem \ref{VURcrofton2} and \eqref{normalizing constant} that
\begin{equation}
V_{n-2}(A \vert L_0^\perp)
 \int_{\cE_1^{H}} G_s(\Eorigo)V_0(K \cap E)\sin(\angle(E,L_0))^{n-2}\, \mu_1^H(dE)
\end{equation} 
is an unbiased estimator of $\T[s]$. Hence the surface tensors can be estimated by a two-step procedure. First, let $H$ be a \textit{VUR} 2-flat hitting the convex body $A$ with vertical direction $L_0$. Given $H$, the integral
\begin{equation} \label{integral}
\int_{\cE_1^{H}} G_s(\Eorigo)V_0(K \cap E)\sin(\angle(E,L_0))^{n-2} \,\mu_1^H(dE)
\end{equation}
is estimated in the following way. Let $E \in \cE^H_1$ be an \textit{IUR} line in $H$ hitting $A$, i.e. the distribution of $E$ is given by 
\begin{equation*}
P(E \in \A)= c_3(A) \int_{\A} \1(A \cap E \neq \emptyset) \, \mu_1^H (dE), \qquad \A \in \B(\cE_1^H),
\end{equation*}
where
\begin{equation*}
c_3(A)=\frac{\pi}{2}V_{1}(A \cap H)^{-1}
\end{equation*}
is the normalizing constant. The integral \eqref{integral} is then estimated unbiasedly by 
\begin{equation}\label{Integral estimator}
c_3(A)^{-1}G_s(\Eorigo)V_0(K \cap E) \sin(\angle(E,L_0))^{n-2}.
\end{equation}
\begin{example}\rm 
Consider the case $s=2$. Let $H$ be a \textit{VUR} 2-flat hitting $A \in \K^n$ with vertical direction $L_0$. Given $H$, let $E$ be an \textit{IUR} line in $H$ hitting $A$. Then
\begin{equation*}
\frac{ \kappa_{n-2}V_{n-2}(A \vert L_0^\perp)V_1(A \cap H)}{ \omega_{n+1} }\bigg((n+1)Q(\Eorigo)-Q \bigg)V_0(K \cap E) \sin(\angle(E,L_0))^{n-2}
\end{equation*}
is an unbiased estimator of $\T[2]$.
\end{example}
   
Using \cite[(13.13)]{Weil} and an invariance argument, the integral \eqref{integral} can alternatively be expressed by means of the support function of $K$ in the following way
\begin{align*}
&\quad ~ \int_{\cE_1^{H}} G_s(\Eorigo)V_0(K \cap E)\sin(\angle(E,L_0))^{n-2} \,\mu_1^H (dE)
\\
&=\frac{1}{\omega_2}\int_{S^{n-1} \cap \Horigo} G_s(u^\perp \cap \Horigo)\sin(\angle(u^\perp \cap \Horigo,L_0))^{n-2} 
\\ & \qquad \qquad \quad \times \int_{[u]} V_0(K \cap H \cap (u^\perp + x)) \, \lambda_{[u]}(dx) \, \Haus^{1}(du) \nonumber
\\
&=\frac{1}{\omega_2}\int_{S^{n-1} \cap \Horigo}  G_s(u^\perp \cap \Horigo) \cos(\angle(u,L_0))^{n-2} w(K \cap H, u) \,\Haus^{1}(du),
\end{align*}
where $ [u] $ denotes the linear hull of a unit vector $ u $, and
\begin{equation*}
w(M,u)=h(M,u)+h(M,-u)
\end{equation*}
is the width of $M \in \K^n$ in direction $u$.
Hence, given $H$, 
\begin{equation} \label{Alternative integral estimator}
G_s(U^\perp \cap \Horigo)\cos(\angle(U,L_0))^{n-2} w(K \cap H, U)
\end{equation}
is an unbiased estimator of the integral \eqref{integral} if $U$ is uniform on $S^{n-1} \cap \Horigo$. 
As in the \textit{IUR} set-up in Section \ref{IUR} we have two estimators: an estimator \eqref{Integral estimator}, where it is only necessary to observe whether the random line $E$ hits or misses $K$, and the alternative estimator $\eqref{Alternative integral estimator}$, which requires more information. The latter estimator has a better precision at least when the reference set $A$ is large. Variance reduction can be obtained by combining the estimators with a systematic sampling approach.

\subsection{Estimation based on non-isotropic random lines}\label{Sec noniso}
In this section we consider estimators based on non-isotropic random lines. It is well-known from the theory of importance sampling, that variance reduction of estimators can be obtained by modifying the sampling distribution in a suitable way (see, e.g., \cite{Asmussen}). The estimators in this section are developed with inspiration from this theory. Let again $K \in \K^n$, and let $f \colon \cLo \rightarrow [0,\infty)$ be a density with respect to the invariant measure $\nu_1^n$ on $\cLo$ such that $f$ is positive $\nu^n_1$-almost surely. Then by Theorem \ref{crofton2} we have trivially
\begin{equation}\label{density integral}
\int_{\En_1} \frac{G_s(\Eorigo)V_0(K \cap E)}{f(\Eorigo)} \, f(\Eorigo)\,\mu_1^n(dE) = \T[s].
\end{equation}
Let $A \subseteq \R^n$ be a compact reference set containing $K$, and let $E$ be an $f$-weighted random line in $\R^n$ hitting A, that is, the distribution of $E$ is given by
\begin{align*}
P(E \in \A) = c_4(A) \int_{\A} \1(E \cap A \neq \emptyset) f(\Eorigo) \,  \mu_1^n (dE)
\end{align*}
for $\A \in \B(\En_1)$, where 
\begin{equation*}
c_4(A)=\bigg(\int_{\En_1} \1(E \cap A \neq \emptyset) f(\Eorigo)\, \mu_1^n (dE)\bigg)^{-1}
\end{equation*} 
is a normalizing constant. 
Then 
\begin{equation*}
\frac{c_4(A)^{-1} G_{s}(\Eorigo)V_0(K \cap E)}{f(\Eorigo)}
\end{equation*}
is an unbiased estimator of $\T[s]$. Notice that if we let the density $ f $ be constant, then this procedure coincides with the \textit{IUR} design in Section \ref{IUR}. 

Our aim is to decide, which density $f$ should be used in order to decrease the variance of the estimator of $\T[s]$. Furthermore, we want to compare this variance with the variance of the estimator based on an \textit{IUR} line. From now on, we restrict the investigation to the situation where $ n=2 $ and $ s=2 $. Furthermore, we assume that the reference set $ A $ is a ball in $ \R^2 $ of radius $ R $ for some $ R > 0 $. Then $ c_4(A)=(2R)^{-1} $ independently of $f$.

Since $ \Phi_{1,0,2}(K) $ can be identified with a symmetric $ 2 \times 2 $ matrix, we have to estimate three unknown components. We consider the variances of the three estimators separately.  
The components of the associated matrix of $G_2(L)$ for $L \in \cL^n_1$ is defined by
\begin{equation}
g_{ij}(L)=G_2(L)(e_i,e_j),
\end{equation}
for $i,j=1,2$, where $(e_1,e_2)$ is the standard basis of $\R^2$. More explicitly, by Example \ref{s er 4}, the associated matrix of $G_2(L)$ of the line $L=[u]$, for $u \in S^1$, is 
\begin{equation*}
\{g_{ij}([u])\}_{ij}=
\frac{3}{8}
\begin{pmatrix}
u_1^2 - \frac{1}{3} & u_1 u_2 \\
u_1u_2 & u_2^2-\frac{1}{3}
\end{pmatrix}.
\end{equation*} 
Now let 
\begin{equation*}
\ch := 2R \, g_{ij}(\Eorigo)V_0(K \cap E).
\end{equation*}
Then
\begin{equation}\label{estnon}
\frac{\ch}{f(\Eorigo)}
\end{equation}
is an unbiased estimator of $ \Phi_{1,0,2}(K)(e_i,e_j) $, when $ E $ is an $ f $-weighted random line in $ \R^2 $ hitting $ A $.

For a given $K \in \K^2$ the weight function $f$ minimizing the variance of the estimators of the form \eqref{estnon} can be determined.

\begin{lemma} \label{lemmaopt}
For a fixed $K \in \K^2$ with $\dim K \geq 1$ and $i,j \in \{1,2\}$, the estimator \eqref{estnon} has minimal variance if and only if $f=f_K^*$ holds $\nu^2_1-a.s.$, where
\begin{equation}
f_K^*(L)\propto \sqrt{2RV_1(K\vert L^{\perp})}\, \vert g_{ij}(L) \vert
\end{equation}
is a density with respect to $\nu^2_1$ that depends on $i,j$ and $K$.
\end{lemma}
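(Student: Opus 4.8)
The plan is to recognize this as a standard importance-sampling variance-minimization problem, where the optimal sampling density is proportional to the absolute value of the integrand of the target integral representation. Concretely, since $E$ is an $f$-weighted random line hitting the fixed ball $A=RB^2$ with $c_4(A)=(2R)^{-1}$, the distribution of $\pi(E)=L$ has $\nu_1^2$-density proportional to $f$, and conditionally on $L$ the position $x\in L^\perp$ is uniform on $A\mid L^\perp$. Writing the estimator \eqref{estnon} as a function of $(L,x)$ and using \eqref{Alternativ}, its expectation is $\Phi_{1,0,2}(K)(e_i,e_j)$ and the second moment is $\int_{\mathcal{L}_1^2}\frac{1}{f(L)}\big(\int_{L^\perp}(\hat\varphi_{ij}(K\cap(L+x)))^2\,\lambda_{L^\perp}(dx)\big)\,\nu_1^2(dL)$ up to the normalizing factor coming from the uniform position density on $A\mid L^\perp$. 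Since $V_0(K\cap(L+x))\in\{0,1\}$, one has $(\hat\varphi_{ij})^2 = (2R)^2 g_{ij}(L)^2 V_0(K\cap(L+x))$, and the inner integral collapses to $(2R)^2 g_{ij}(L)^2 V_1(K\mid L^\perp)$; so the second moment is a constant times $\int_{\mathcal{L}_1^2}\frac{2R\, g_{ij}(L)^2 V_1(K\mid L^\perp)}{f(L)}\,\nu_1^2(dL)$.

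The key step is then to minimize $\int \frac{2R\,g_{ij}(L)^2 V_1(K\mid L^\perp)}{f(L)}\,\nu_1^2(dL)$ over all probability densities $f$ on $\mathcal{L}_1^2$ with respect to $\nu_1^2$. This is exactly the classical optimization solved by the Cauchy–Schwarz inequality: for any such $f$,
\begin{equation*}
\int \frac{h(L)^2}{f(L)}\,\nu_1^2(dL)\cdot\int f(L)\,\nu_1^2(dL)\ \geq\ \Big(\int h(L)\,\nu_1^2(dL)\Big)^2,
\end{equation*}
where I set $h(L)=\sqrt{2R\,V_1(K\mid L^\perp)}\,|g_{ij}(L)|$, with equality if and only if $f$ is proportional to $h$ almost surely. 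Since $\int f\,d\nu_1^2=1$, the lower bound $\big(\int h\,d\nu_1^2\big)^2$ is attained precisely when $f=f_K^*\propto h$, $\nu_1^2$-a.s. Minimizing the second moment is equivalent to minimizing the variance because the expectation (hence the mean squared) is the same for all admissible $f$ by unbiasedness, so this yields the claimed characterization.

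Two technical points need care. First, one must check that $f_K^*$ is a genuine density, i.e. that $0<\int h\,d\nu_1^2<\infty$ and that $h>0$ holds $\nu_1^2$-a.s.; the finiteness is clear since $K$ is bounded and $g_{ij}$ is bounded, and the strict positivity $\nu_1^2$-a.s. uses $\dim K\geq 1$ (so $V_1(K\mid L^\perp)>0$ for $\nu_1^2$-a.e.\ $L$) together with the fact that $g_{ij}(L)$, as a trigonometric polynomial in the direction of $L$, vanishes only on a $\nu_1^2$-null set. This is also why the hypothesis $\dim K\geq 1$ appears in the statement. Second, the equality condition in Cauchy–Schwarz holds only up to $\nu_1^2$-null sets, which is exactly the ``$\nu_1^2$-a.s.'' qualifier in the conclusion; and one should note $f_K^*$ depends on $i,j$ through $g_{ij}$ and on $K$ through $V_1(K\mid\cdot)$, as asserted. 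The main obstacle is essentially bookkeeping: correctly reducing the second moment of \eqref{estnon} to the one-dimensional integral $\int h^2/f\,d\nu_1^2$ by disintegrating the $f$-weighted line measure into the direction part (density $f$ w.r.t.\ $\nu_1^2$) and the uniform position part on $A\mid L^\perp$, and carefully tracking the normalizing constants $c_4(A)=(2R)^{-1}$ and the width factors $V_{n-1}(A\mid L^\perp)=2R$ so that they cancel cleanly; once this reduction is in place, the optimization is the textbook argument above.
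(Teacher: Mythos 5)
Your proposal is correct and follows essentially the same route as the paper: both reduce the second moment of \eqref{estnon} to $2R\int_{\mathcal{L}_1^2} g_{ij}(L)^2\,V_1(K\mid L^\perp)\,f(L)^{-1}\,\nu_1^2(dL)$ and then apply a standard variational inequality for importance sampling, with the equality case giving the ``if and only if''. The only cosmetic difference is that you invoke Cauchy--Schwarz directly, whereas the paper cites the corresponding result in Asmussen (itself proved by Jensen's inequality applied to $t\mapsto t^2$) and uses strict convexity to pin down the equality case; these are interchangeable formulations of the same argument.
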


\begin{proof}
As $K$ is compact, $f^*_K$ is a well-defined probability density, and since $\dim K  \geq 1$, the density $f^*_K$ is non-vanishing $\nu^2_1$-almost surely.  
The second moment of the estimator \eqref{estnon} is
\begin{equation} \label{2momentberegning}
\BE_f \bigg(\frac{\ch}{f(\Eorigo)}\bigg)^2 
= 2R \int_{\cL^2_1}V_1(K \vert L^\perp) \frac{g_{ij}(L)^2}{f(L)}\, \nu_1^2(dL),
\end{equation}
where $\BE_f$ denotes expectation with respect to the distribution of an $f$-weighted random line in $\R^2$ hitting $A$. The right-hand side of \eqref{2momentberegning} is the second moment of the random variable 
\begin{equation*}
\frac{\sqrt{2RV_1(K \vert \Lp)}\,g_{ij}(L)}{f(L)},
\end{equation*} 
where the distribution of the random line $L$ has density $f$ with respect to $\nu^2_1$. By \cite[Chapter 5, Theorem 1.2]{Asmussen} the second moment of this variable is minimized, when $f$ is proportional to $\sqrt{2RV_1(K \vert \Lp)}\, |g_{ij}(L)|$. Since the proof of \cite[Chapter 5, Theorem 1.2]{Asmussen} follows simply by an application of Jensen's inequality to the function $t \mapsto t^2$, equality can be characterized due to the strict convexity of this function, (see, e.g., \cite[(B.4)]{Gardner}). Equality holds if and only if $\sqrt{2RV_1(K \vert \Lp)}\,|g_{ij}(L)|$ is a constant multiple of $f(L)$ (or equivalently $f = f_K^*$) almost surely.  
\end{proof}

The proof of Lemma \ref{lemmaopt} generalizes directly to arbitrary dimension $n$. As a consequence of Lemma \ref{lemmaopt}, we obtain that for any convex body $K \in \K^2$, optimal non-isotropic sampling provides a strictly smaller variance of the estimator \eqref{estnon} than isotropic sampling. Indeed, noting that \eqref{estnon} with a constant function $f$ reduces to the usual estimator \eqref{s er 2} (with $n=2$, $A=RB^2$) based on \textit{IUR} lines, this follows from the fact that $f^*_K$ cannot be constant. If $f^*_K$ was constant almost surely, then $V_1(K \vert u^\perp) \propto |g_{ij}([u])|^{-2} $ for almost all $u \in S^1$. The left-hand side is essentially bounded, whereas the right-hand side is not. This is a contradiction.

A further consequence of Lemma \ref{lemmaopt} is that there does not exist an estimator of the form \eqref{estnon} independent of $K$ that has uniformly minimal variance for all $K \in \K^2$ with $\dim K \geq 1$. Unfortunately, $ f_K^* $ is not accessible, as it depends on $ K $, which is typically unknown. Even though estimators of the form \eqref{estnon} cannot have uniformly minimal variance for all $K \in \K^2$ with $\dim K \geq 1$, we now show that there is a non-isotropic sampling design which always yields smaller variance than the isotropic sampling design. 
Let
\begin{equation*}
f^*(L) \propto |g_{ij}(L)| 
\end{equation*}
be a density with respect to $\nu^2_1$. As $ |g_{ij}(L)| $ is bounded and non-vanishing for $ \nu^2_1$-almost all $ L $, $ f^* $ is well-defined and non-zero $ \nu^2_1$-almost everywhere. For convex bodies of constant width, the density $f^*$ coincides with the optimal density $f^*_K$.

\begin{theorem}\label{onecomponent}
Let $ K \in \K^2 $, and let $ A = RB^2 $ for some $ R>0 $ be such that $ K \subseteq A $. Then
\begin{equation}\label{varineq}
\text{Var}_{f^*}\bigg(\frac{\ch}{f^*(\Eorigo)}\bigg)
<
\text{Var}_{IUR}\big(\ch \big).
\end{equation}
\end{theorem}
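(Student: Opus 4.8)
The plan is to compare the two variances by comparing second moments, since both estimators are unbiased with the same expectation $\Phi_{1,0,2}(K)(e_i,e_j)$. For a density $f$ on $\cL^2_1$ with respect to $\nu^2_1$, the second moment of the estimator $\ch/f(\Eorigo)$ based on an $f$-weighted random line hitting $A=RB^2$ was already computed in \eqref{2momentberegning}, namely
\begin{equation*}
\BE_f\bigg(\frac{\ch}{f(\Eorigo)}\bigg)^2
= 2R \int_{\cL^2_1} V_1(K\vert L^\perp)\,\frac{g_{ij}(L)^2}{f(L)}\,\nu_1^2(dL).
\end{equation*}
The \textit{IUR} estimator $\ch$ corresponds to taking $f\equiv 1$ in this formula (recall $c_4(A)=(2R)^{-1}$), so its second moment is $2R\int_{\cL^2_1} V_1(K\vert L^\perp)\,g_{ij}(L)^2\,\nu_1^2(dL)$. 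Substituting $f=f^*$, where $f^*(L)=\gamma^{-1}|g_{ij}(L)|$ with normalizing constant $\gamma=\int_{\cL^2_1}|g_{ij}(L)|\,\nu_1^2(dL)$, the factor $g_{ij}(L)^2/f^*(L)$ collapses to $\gamma\,|g_{ij}(L)|$, and the second moment of the $f^*$-estimator becomes $2R\gamma\int_{\cL^2_1} V_1(K\vert L^\perp)\,|g_{ij}(L)|\,\nu_1^2(dL)$.

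Thus \eqref{varineq} reduces to the inequality
\begin{equation*}
\gamma \int_{\cL^2_1} V_1(K\vert L^\perp)\,|g_{ij}(L)|\,\nu_1^2(dL)
\le \int_{\cL^2_1} V_1(K\vert L^\perp)\,g_{ij}(L)^2\,\nu_1^2(dL),
\end{equation*}
with strict inequality. I would obtain this by applying the Cauchy--Schwarz inequality to the two functions $\sqrt{V_1(K\vert L^\perp)\,|g_{ij}(L)|}$ and $\sqrt{V_1(K\vert L^\perp)\,|g_{ij}(L)|^3}$ with respect to $\nu^2_1$; their product is $V_1(K\vert L^\perp)\,g_{ij}(L)^2$, the first has squared norm $\int V_1(K\vert L^\perp)|g_{ij}|\,d\nu^2_1$, and the second has squared norm $\int V_1(K\vert L^\perp)|g_{ij}|^3\,d\nu^2_1$. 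Alternatively, and perhaps more transparently, one can view the left-hand side as $\gamma\,\BE_{f^*}[V_1(K\vert L^\perp)]$ and the right-hand side as $\gamma\,\BE_{f^*}[V_1(K\vert L^\perp)\,|g_{ij}(L)|\cdot\gamma^{-1}\cdot\gamma]$... more cleanly: divide through and recognize the inequality as $\BE[XY]^2\le\BE[X]\BE[XY^2]$ type; the most economical route is Jensen applied to $t\mapsto t^2$ under the probability measure $f^*\,d\nu^2_1$, exactly as in the proof of Lemma \ref{lemmaopt}. Indeed, Lemma \ref{lemmaopt} already tells us the $f^*_K$-estimator is the unique variance minimizer, so any $f\ne f^*_K$ (a.s.), in particular the constant density, gives strictly larger variance; it then suffices to observe that $f^*$ is \emph{not} a.s.\ constant unless $g_{ij}$ is, which fails since $g_{ij}([u])=\tfrac38(u_i u_j-\tfrac13\delta_{ij})$ genuinely depends on $u$.

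The one point requiring a little care — and the main obstacle — is ensuring the comparison is with the correct \textit{IUR} estimator and that the strict inequality genuinely holds for \emph{every} $K\in\K^2$ (the theorem makes no nondegeneracy assumption on $K$). If $\dim K=0$, i.e. $K$ is a point, then $V_1(K\vert L^\perp)=0$ for all $L$, both sides vanish, and \eqref{varineq} would fail as a strict inequality; so I expect the intended reading is that $K$ is such that $V_1(K\vert L^\perp)$ is not $\nu^2_1$-a.e.\ zero, equivalently $\dim K\ge 1$, consistent with the hypothesis of Lemma \ref{lemmaopt}. Granting $\dim K\ge 1$, the weight $V_1(K\vert L^\perp)$ is positive on a set of positive $\nu^2_1$-measure, $|g_{ij}|$ is non-constant there, so the Jensen (equivalently Cauchy--Schwarz) step is strict. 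Assembling: second moment of $f^*$-estimator $<$ second moment of \textit{IUR} estimator, and since means agree, $\text{Var}_{f^*}<\text{Var}_{IUR}$, which is \eqref{varineq}.
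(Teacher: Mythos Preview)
Your reduction to second moments is correct, and so is the target inequality
\[
\Big(\int_{\cL^2_1}|g_{ij}|\,d\nu^2_1\Big)\int_{\cL^2_1} V_1(K\vert L^\perp)\,|g_{ij}(L)|\,\nu^2_1(dL)
\;<\; \int_{\cL^2_1} V_1(K\vert L^\perp)\,g_{ij}(L)^2\,\nu^2_1(dL).
\]
The gap is that none of the routes you sketch actually proves it. Your Cauchy--Schwarz with $\sqrt{wh}$ and $\sqrt{wh^3}$ (writing $w=V_1(K\vert\,\cdot\,^\perp)$, $h=|g_{ij}|$) gives $(\int wh^2)^2\le(\int wh)(\int wh^3)$, which bounds $\int wh^2$ from \emph{above}---the wrong direction---and introduces an extraneous $\int wh^3$. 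The appeal to Lemma~\ref{lemmaopt} is also off target: that lemma identifies $f^*_K$ as the unique minimizer, so \emph{both} $f^*$ and the constant density yield variance strictly larger than $\text{Var}_{f^*_K}$; it tells you nothing about how $f^*$ and the constant compare to one another. Noting that $f^*$ is non-constant does not bridge this.

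More fundamentally, with $c=\int h\,d\nu^2_1$ the inequality $c\int wh\le\int wh^2$ is \emph{false} for general nonnegative $w$: it rewrites as $\int wh(h-c)\,d\nu^2_1\ge 0$, and placing the mass of $w$ where $h<c$ makes this negative (for instance $w$ supported on a small arc around a zero of $g_{ij}$). So no abstract convexity argument can succeed; the special structure of $w(L)=V_1(K\vert L^\perp)$ must enter. The paper exploits that $u\mapsto V_1(K\vert u^\perp)$ is the support function of an origin-symmetric zonoid, reduces by linearity and zonotope approximation to the case where this support function comes from a single line segment of direction $u_\gamma$, $\gamma\in[0,\pi)$, and then verifies the inequality by explicit trigonometric computation: one evaluates closed-form expressions $P_{f^*}(\gamma),P_{IUR}(\gamma)$ for the diagonal components and $Q_{f^*}(\gamma),Q_{IUR}(\gamma)$ for the off-diagonal one, and checks by elementary calculus that $P_{f^*}<P_{IUR}$ and $Q_{f^*}<Q_{IUR}$ on $[0,\pi]$ (with a uniform ratio $\lambda<1$, needed so that strict inequality survives the approximation step).
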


\begin{proof}
Using the fact that both estimators are unbiased, it is sufficient to show that there is a $0 < \lambda < 1$ with
\begin{equation}\label{Eineq}
\BE_{f^*}\bigg(\frac{\ch}{f^*(\Eorigo)}\bigg)^2 \leq
\lambda \, \BE_{IUR}\big(\ch\big)^2,
\end{equation}
for all $K \in \K^2$. Using \eqref{2momentberegning}, the left-hand side of this inequality is 
\begin{equation*}
2R \int_{\cL^2_1}|g_{ij}(L)| \, \nu_1^2 (dL) \int_{\cL^2_1} |g_{ij}(L)|V_{1}(K \vert L^\perp) \, \nu_1^2 (dL)
\end{equation*}
and the right-hand side is
\begin{equation*}
2R \int_{\cL^2_1} g_{ij}(L)^2 \, V_{1}(K  \vert L^\perp) \, \nu_1^2 (dL).
\end{equation*} 

Since $u \mapsto V_1(K \vert u^{\perp})$ is the support function of an origin-symmetric zonoid, the inequality \eqref{Eineq} holds if
\begin{align}\label{support ineq}
&  \int_0^{2 \pi}|g_{ij}([u_{\phi}])| \, \frac{d\phi}{2\pi} \, \int_0^{2\pi} |g_{ij}([u_{\phi}])|\, h(Z,u_{\phi}) \, \frac{d\phi}{2\pi}  \nonumber
\\ 
& \quad \leq \lambda \, \int_0^{2 \pi} g_{ij}([u_{\phi}])^2 h(Z,u_{\phi}) \, \frac{d\phi}{2\pi}
\end{align}
for any origin-symmetric zonoid $ Z $. Here $ u_{\phi} = (\cos(\phi),\sin(\phi))^\top $ for $ \phi \in [0,2\pi] $.
As support functions of zonoids can be uniformly approximated by support functions of zonotopes (see, e.g., \cite[Theorem 1.8.14]{Schneider93}) and the integrals in \eqref{support ineq}  depend linearly on these support functions, it is sufficient to show \eqref{support ineq} for all origin-symmetric line segments $Z$ of length two. Hence, we may assume that $Z$ is an origin-symmetric line segment with endpoints $\pm(\cos(\gamma), \sin(\gamma))^\top$, where $\gamma \in [0,\pi)$. We now substitute the support function
\begin{equation*}
h(Z,u_{\phi})=|\cos(\phi-\gamma)|
\end{equation*}
for $\phi \in [0,2\pi),$ into \eqref{support ineq}.

First, we consider the estimation of the first diagonal element of $\Phi_{1,0,2}(K)$, that is, $i,j=1$ and $g_{ij}([u_\phi])= \frac{3}{8}(\cos^2(\phi)-\frac{1}{3}) $ for $ \phi \in [0,2\pi] $. The integrals in \eqref{support ineq} then become
\begin{equation*}
P_{f^*}(\gamma):= \frac{3}{8}\int_0^{2\pi}|\cos^2(\phi)-\frac{1}{3}| \, \frac{d\phi}{2 \pi} \, \frac{3}{8} \int_{0}^{2\pi}|\cos^2(\phi)-\frac{1}{3}| |\cos(\phi-\gamma)| \, \frac{d\phi}{2\pi}
\end{equation*}
and
\begin{equation*}
P_{IUR}(\gamma):=\frac{9}{64}\int_0^{2\pi}\bigg(\cos^2(\phi)-\frac{1}{3}\bigg)^2 |\cos(\phi- \gamma)| \, \frac{d\phi}{2 \pi}.
\end{equation*}
Let $ \kappa = \arccos(\frac{1}{\sqrt{3}}) $. Then
\begin{equation*}
M:= \frac{3}{8}\int_0^{2\pi}|\cos^2(\phi)-\frac{1}{3}| \, \frac{d\phi}{2 \pi} 
=\frac{\sqrt{2}+\kappa}{4\pi}-\frac{1}{16},
\end{equation*}
and elementary, but tedious calculations show that
\begin{align*}
P_{f^*}(\gamma)&=\frac{M}{ \pi} \bigg(\frac{2\sqrt{2}}{3\sqrt{3}}\cos(\gamma) - \frac{1}{4}\cos^2(\gamma) \bigg)\1_{[0,\frac{\pi}{2}-\kappa]}(\gamma)
\\
&\qquad +
\frac{M}{\pi}\bigg(\frac{1}{4}\cos^2(\gamma) + \frac{1}{3\sqrt{3}}\sin(\gamma) \bigg)\1_{(\frac{\pi}{2}-\kappa,\frac{\pi}{2}]}(\gamma)
\end{align*}
for $ \gamma \in [0,\frac{\pi}{2}] $. Further,  $P_{f^*}(\gamma)=P_{f^*}(\pi-\gamma) $ for $\gamma \in [\frac{\pi}{2},\pi] $. For the \textit{IUR} estimator we get that
\begin{equation*}
P_{IUR}(\gamma)=\frac{1}{20\pi}\bigg(-\frac{3}{8}\cos^4(\gamma) + \cos^2(\gamma) + \frac{1}{2} \bigg)
\end{equation*}
for $ \gamma \in [0,\frac{\pi}{2}] $, and $ P_{IUR}(\gamma)=P_{IUR}(\pi -\gamma) $ for $ \gamma \in [\frac{\pi}{2},\pi] $. The functions $ P_{f^*} $ and $ P_{IUR} $ are plotted in Figure \ref{Secondmoments}. Basic calculus for the comparison of these two functions shows that $P_{f^*} < P_{IUR}$. This implies that $P_{f^*} \leq \lambda P_{IUR}$, where $\nobreak{\lambda=\max_{\gamma \in [0,\pi]} \frac{P_{f^*(\gamma)}}{P_{IUR}(\gamma)}}$ is smaller than one as $P_{f^*}$ and $P_{IUR}$ are continuous on the compact interval $[0,\pi]$. Hereby \eqref{support ineq} is satisfied for $i=j=1$. Interchanging the roles of the coordinate axes in \eqref{support ineq} yields the same result for $i=j=2$.

We now consider estimation of the off-diagonal element, that is, $i=1$, $j=2$. Then the left-hand and the right-hand side of \eqref{support ineq} become
\begin{equation}\label{Qfs}
Q_{f^*}(\gamma)=\frac{3}{8} \int_0^{2\pi}|\cos(\phi)\sin(\phi)| \, \frac{d\phi}{2 \pi}
\,\frac{3}{8} \int_{0}^{2\pi} |\cos(\phi)\sin(\phi)| |\cos(\phi-\gamma)| \, \frac{d\phi}{2 \pi} 
\end{equation}
and
\begin{equation}\label{M}
Q_{IUR}(\gamma)=\frac{9}{64}\int_0^{2\pi}\cos^2(\phi)\sin^2(\phi) |\cos(\phi- \gamma)| \,\frac{d\phi}{2 \pi}
\end{equation}
for $ \gamma \in [0,\pi] $. 
We have 
\begin{equation*}
\frac{3}{8} \int_0^{2\pi}|\cos(\phi)\sin(\phi)| \, \frac{d\phi}{2 \pi} = \frac{3}{8\pi},
\end{equation*}
and then
\begin{equation*}
Q_{f^*}(\gamma) = 
\frac{3}{32\pi^2}\bigg(\sin(\gamma)+\cos(\gamma)-\sin(\gamma)\cos(\gamma) \bigg)
\end{equation*}
for $ \gamma \in [0,\frac{\pi}{2}] $, and $ Q_{f^*}(\gamma)=Q_{f^*}(\gamma-\frac{\pi}{2}) $ for $ \gamma \in [\frac{\pi}{2},\pi] $. 
For $ \gamma \in [0,\pi]$ we further find that
\begin{equation*}
Q_{IUR}(\gamma) = \frac{3}{320 \pi}\bigg( 4-\frac{1}{2}\sin^2(2\gamma)  \bigg).
\end{equation*}
The functions $ Q_{IUR} $ and $ Q_{f^*} $ are plotted in Figure \ref{Secondmoments2}. Basic calculus shows that
\begin{equation}\label{minmaxf}
\min_{0 \leq \gamma \leq \pi} Q_{f^*} = \frac{3}{32 \pi^2}\left(\sqrt{2}-\frac{1}{2}\right),
\qquad 
\max_{0 \leq \gamma \leq \pi}Q_{f^*}=\frac{3}{32 \pi^2},
\end{equation}
and
\begin{equation}\label{maxminIUR}
\min_{0 \leq \gamma \leq \pi} Q_{IUR} = \frac{21}{640 \pi},
\qquad
\max_{0 \leq \gamma \leq \pi} Q_{IUR} = \frac{3}{80 \pi}.
\end{equation}
Hence 
\begin{equation*}
Q_{f^*}(\gamma) \leq \frac{3}{32 \pi^2} \le \lambda \frac{21}{640 \pi} \leq \lambda \, Q_{IUR}(\gamma)
\end{equation*}
for $\gamma \in [0,\pi]$ with $\lambda=\frac{3}{\pi}< 1$. Hereby \eqref{support ineq} holds for all zonotopes $Z$ and $i=1,j=2$, and the claim is shown.
\end{proof}

\begin{figure}
\begin{center}
\includegraphics[scale=0.5]{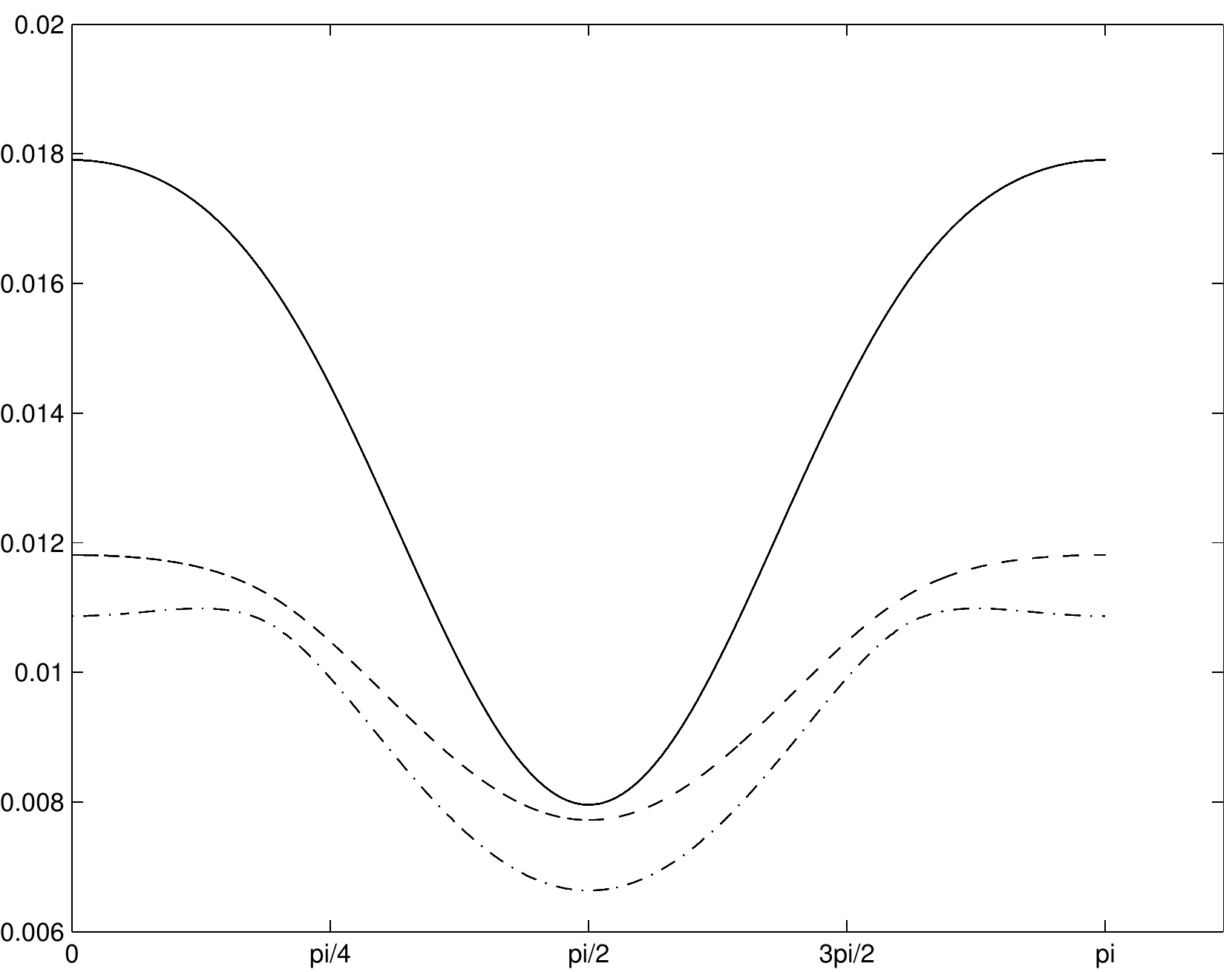}
\end{center}
\caption{The straight line is $ P_{IUR} $, the dashed line is $ P_{f^*} $, and the dash-dotted line is $ P_{opt} $.}
\label{Secondmoments}
\end{figure}

\begin{figure}
\begin{center}
\includegraphics[scale=0.5]{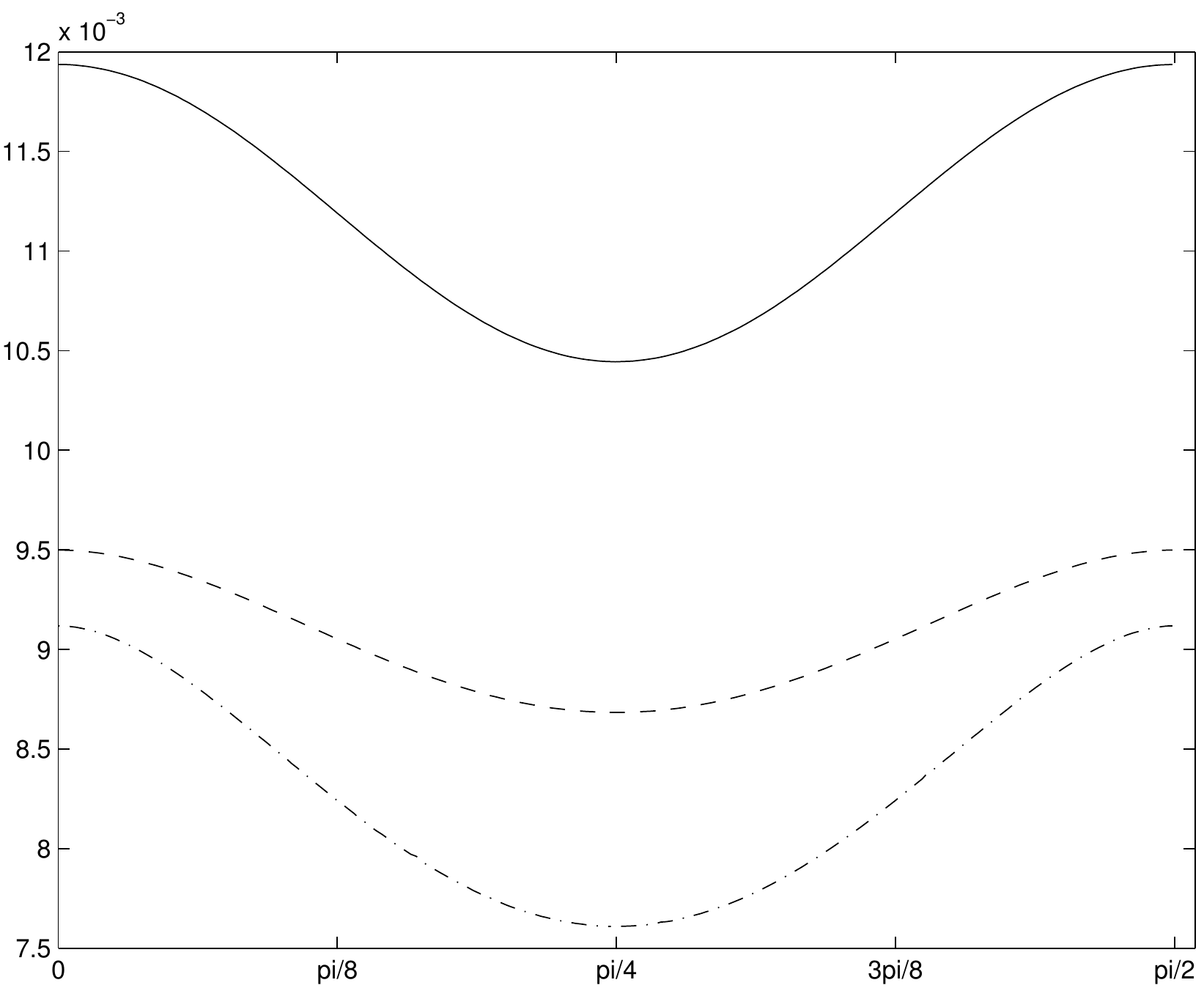}
\end{center}
\caption{The straight line is $ Q_{IUR} $, the dashed line is $ Q_{f^*} $, and the dash-dotted line is $ Q_{opt} $.}
\label{Secondmoments2}
\end{figure}

If $ E $ is an $ f^* $-weighted random line suited for estimating one particular component of $ \Phi_{1,0,2}(K) $, then $ E $ should not be used to estimate any of the other components, as this would increase the variance of these estimators considerably. Hence, if we estimate all of the components of the tensor using the estimator based on $ f^* $-weighted lines, we need three lines; one for each component. If we want to compare this approach with an estimation procedure based on \textit{IUR} lines, requiring the same workload, we will use \emph{three} \textit{IUR} lines. Note however, that all three \textit{IUR} lines can be used to estimate \emph{all} three components of the tensor. 
This implies that we should actually compare the variance of the estimator based on \emph{one} $ f^* $-weighted random line with the variance of an estimator based on \emph{three} \textit{IUR} lines. It turns out that the estimator based on \emph{three} independent \textit{IUR} lines has always smaller variance, than the estimator based on \emph{one} $ f$-weighted line, no matter how the density $f$ is chosen. 

\begin{theorem}
Let $ K \in \K^2 $, and let $ A =RB^2 $ with some $ R > 0 $ be such that $ K \subseteq A $. Let $ f $ be a density with respect to $ \nu^2_1 $, which is non-zero $ \nu^2_1 $-almost everywhere. Let $ E_1,E_2$ and $ E_3 $ be independent \textit{IUR} lines in $ \R^2 $ hitting $ A $. Then
\begin{equation*}
Var\bigg(\frac{1}{3}\sum_{k=1}^3 \hat{\varphi}_{ij}(K \cap E_k) \bigg)
<
Var_{f}\bigg(\frac{\ch}{f(\pi(E))}\bigg)
\end{equation*}
for $i,j \in \{1,2\}$.
\end{theorem}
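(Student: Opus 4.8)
The plan is to turn the variance inequality into a comparison of second moments, to eliminate the free density $f$ by optimising over it, and then to reduce the resulting geometric inequality to an elementary one-variable estimate via line segments.

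\textbf{Reduction to second moments.} Both estimators are unbiased with the same mean $m:=\Phi_{1,0,2}(K)(e_i,e_j)$, so their variances differ only through their second moments. Since $E_1,E_2,E_3$ are i.i.d., the three-line estimator has second moment $\tfrac13\BE_{IUR}\bigl[\hat\varphi_{ij}(K\cap E)^2\bigr]+\tfrac23 m^2$, while by \eqref{2momentberegning} (applied with $f\equiv 1$ for the \textit{IUR} case, noting as in the text that a constant density reduces \eqref{estnon} to \eqref{s er 2}, and with a general density in the weighted case) one has $\BE_{IUR}[\hat\varphi_{ij}(K\cap E)^2]=2R\int_{\mathcal L^2_1} g_{ij}(L)^2\,V_1(K\vert L^\perp)\,\nu^2_1(dL)$ and $\BE_f(\ch/f(\pi(E)))^2=2R\int_{\mathcal L^2_1} g_{ij}(L)^2 V_1(K\vert L^\perp)/f(L)\,\nu^2_1(dL)$. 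Subtracting $m^2$ from each variance, the desired inequality becomes equivalent to
\begin{align*}
R\int_{\mathcal L^2_1} g_{ij}^2\,V_1(K\vert L^\perp)\,\nu^2_1(dL)\;+\;m^2
\;<\;3R\int_{\mathcal L^2_1}\frac{g_{ij}^2}{f}\,V_1(K\vert L^\perp)\,\nu^2_1(dL).
\end{align*}

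\textbf{Eliminating $f$.} By Cauchy--Schwarz (exactly the estimate used to prove Lemma~\ref{lemmaopt}), $\int g_{ij}^2 V_1(K\vert L^\perp)/f\,\nu^2_1(dL)\ge\bigl(\int|g_{ij}|\sqrt{V_1(K\vert L^\perp)}\,\nu^2_1(dL)\bigr)^2$ for every probability density $f$ with respect to $\nu^2_1$; equivalently, it suffices to prove the theorem for the optimal weight $f=f_K^*$ of Lemma~\ref{lemmaopt}. Writing $v(L):=V_1(K\vert L^\perp)$, it therefore remains to show $R\int g_{ij}^2 v\,d\nu^2_1+m^2<3R\bigl(\int|g_{ij}|\sqrt v\,d\nu^2_1\bigr)^2$. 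For the term $m^2$ I would use $|m|=\bigl|\int g_{ij}v\,d\nu^2_1\bigr|\le\int|g_{ij}|v\,d\nu^2_1$, and exploit $K\subseteq A=RB^2$, which gives the uniform bound $v(L)=V_1(K\vert L^\perp)\le V_1(A\vert L^\perp)=2R$.

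\textbf{Reduction to line segments and the main obstacle.} After this the inequality only involves the function $L\mapsto V_1(K\vert L^\perp)$, which (identifying $\mathcal L^2_1$ with $[0,\pi)$) is the support function of an origin-symmetric zonoid. Representing it as a cosine transform $\int|\langle\cdot,u\rangle|\,\rho(du)$ of a nonnegative even measure $\rho$ and approximating zonoids by zonotopes (as in the proof of Theorem~\ref{onecomponent}), the two terms $\int g_{ij}^2 v\,d\nu^2_1$ and $\int|g_{ij}|v\,d\nu^2_1$ depend linearly on $\rho$, so one reduces to the extremal case in which $\rho$ is concentrated on one antipodal pair, i.e.\ $v(L)=V_1(K\vert L^\perp)=\ell\,|\sin(\phi-\psi)|$ is the width function of a segment of length $\ell\le 2R$ in some direction $\psi$; plugging in the explicit $g_{ij}([u_\phi])$ from Example~\ref{s er 4}, the scale $\ell$ and the radius $R$ factor out harmlessly and the claim becomes an elementary comparison of trigonometric integrals in the single parameter $\psi$, to be carried out separately for $i=j$ and $i\ne j$, just as $P_{f^*},P_{IUR}$ and $Q_{f^*},Q_{IUR}$ were compared in Theorem~\ref{onecomponent}. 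The genuine difficulty is that the right-hand side contains $\sqrt{V_1(K\vert L^\perp)}$, which is \emph{concave}, not linear, in the generating measure $\rho$, so the passage to segments is not automatic for that term; this must be handled either by replacing $\sqrt{\,\cdot\,}$ by its chord on the range $[2r(K),2R]$ of the width before linearising in $\rho$, or by checking directly that $\rho\mapsto\bigl(\int|g_{ij}|\sqrt v\,d\nu^2_1\bigr)^2$ together with the $\rho$-quadratic term $m^2$ attains its extreme value over the admissible $\rho$ at a single atom. Once this is settled, the rest is bookkeeping of constants and the routine (if tedious) one-variable estimate.
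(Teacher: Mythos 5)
Your strategy is the paper's: pass to second moments, eliminate $f$ by the Cauchy--Schwarz step of Lemma \ref{lemmaopt} so that the right-hand side becomes $2R\bigl(\int|g_{ij}(L)|\sqrt{V_1(K\vert L^\perp)}\,\nu^2_1(dL)\bigr)^2$, reduce to line segments, and finish with a one-variable trigonometric comparison (the paper's $P_{opt},Q_{opt}$ against $\tfrac13P_{IUR},\tfrac13Q_{IUR}$). You are in fact more careful than the paper on the bookkeeping: the second moment of the three-line average equals $\tfrac13\BE_{IUR}\bigl[\hat\varphi_{ij}(K\cap E)^2\bigr]+\tfrac23m^2$, and the paper's displayed chain compares only the first summand with the optimal second moment, silently dropping $\tfrac23 m^2$.

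The proposal nevertheless has a genuine gap, located exactly where you flag it: the reduction to segments is the load-bearing step and you leave it with two untested options. For the term $\bigl(\int|g_{ij}|\sqrt v\,d\nu^2_1\bigr)^2$ your second option can be made to work: this functional is superadditive in $v$ (apply Minkowski's integral inequality to $\phi\mapsto\bigl(|g_{ij}|\sqrt{v_k}\bigr)_k$ to get $\sum_k\bigl(\int|g_{ij}|\sqrt{v_k}\bigr)^2\le\bigl(\int|g_{ij}|\sqrt{\textstyle\sum_kv_k}\bigr)^2$), and since both it and $\int g_{ij}^2v$ are homogeneous of degree one in the length of a segment, the pointwise segment inequality does sum up to zonotopes and passes to zonoids; the ``chord of $\sqrt{\cdot}$'' alternative would introduce $r(K)$-dependent constants and degenerates when $r(K)=0$. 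What does \emph{not} go through is the $m^2$ term you rightly insist on keeping: $v\mapsto\bigl(\int g_{ij}v\,d\nu^2_1\bigr)^2$ is convex but not subadditive, so it cannot be distributed over the segments of a zonotope, and the bound $|m|\le\int|g_{ij}|v$ does not repair this; it has to be absorbed separately into the uniform strict gap between $Q_{opt}$ (resp.\ $P_{opt}$) and $\tfrac13Q_{IUR}$ (resp.\ $\tfrac13P_{IUR}$), which nothing in your text does. Together with the deferred trigonometric estimates, this leaves the proof unfinished: the plan is the right one, but its two hardest steps are only named, not executed.
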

\begin{proof}
By Theorem \ref{onecomponent}, the variance of the estimator \eqref{estnon} is bounded from below by the variance of the same estimator with $f=f_K^*$. Hence, it is sufficient to compare the second moments of
\begin{equation*}
\frac{1}{3}\sum_{k=1}^3 \hat{\varphi}_{ij}(K \cap E_k)
\end{equation*}
and \eqref{estnon} with $f=f_K^*$. The latter is
\begin{equation*}
2R \bigg(\int_{\cL^2_1} |g_{ij}(L)| \sqrt{V_1(K \vert L^\perp)} \, \nu^2_1 (dL) \bigg)^2,
\end{equation*}
so let
\begin{equation*}
P_{opt}(\gamma):= \bigg(\frac{3}{8} \int_0^{2\pi} |\cos^2(\phi)-\frac{1}{3}| \sqrt{|\cos(\phi - \gamma)|} \, \frac{d\phi}{2 \pi} \bigg)^2
\end{equation*}
and
\begin{equation*}
Q_{opt}(\gamma):= \bigg( \frac{3}{8} \int_0^{2\pi}|\cos(\phi)\sin(\phi)| \sqrt{|\cos(\phi - \gamma)|} \, \frac{d\phi}{2 \pi} \bigg)^2
\end{equation*}
for $ \gamma \in [0,\pi] $. Using the notation of the previous proofs, by \eqref{Qfs}, \eqref{M}, \eqref{minmaxf} and \eqref{maxminIUR} we have 
\begin{equation*}
Q_{opt}(\gamma) \geq \bigg(\frac{8 \pi Q_{f^*}(\gamma)}{3}\bigg)^2  \geq \frac{9-4\sqrt{2}}{64 \pi^2} > \frac{1}{80\pi} \geq \frac{1}{3}Q_{IUR}(\gamma)
\end{equation*}
for $\gamma \in [0,\pi]$.
Likewise, $ P_{opt}(\gamma) \geq \bigg(\frac{P_{f^*}(\gamma)}{M}\bigg)^2$. Elementary analysis shows that
\begin{equation*}
\min_{0\leq \gamma \leq \frac{\pi}{2}-\kappa}\bigg(\frac{P_{f^*}(\gamma)}{M}\bigg)^2=\frac{25}{324 \pi^2}
> \frac{3}{160\pi} = \max_{0\leq \gamma \leq \frac{\pi}{2} - \kappa} \frac{1}{3} P_{IUR}(\gamma),
\end{equation*}
and that
\begin{equation*}
\bigg(\frac{P_{f^*}(\gamma)}{M}\bigg)^2-\frac{1}{3}P_{IUR}(\gamma) 
\geq 
\bigg(\frac{P_{f^*}(\frac{\pi}{2})}{M}\bigg)^2-\frac{1}{3}P_{IUR}(\frac{\pi}{2}) > 0
\end{equation*}
on $[\frac{\pi}{2}-\kappa,\frac{\pi}{2}]$. Hence $P_{opt}>\frac{1}{3}P_{IUR}$ on $[0,\pi]$, and the assertion is proved.
\end{proof}

This leads to the following conclusion: If one single component of the tensor $\T[2]$ is to be estimated for unknown $K$, the estimator \eqref{estnon} with $f=f^*$ is recommended, as its variance is strictly smaller than the one from isotropic sampling (where $f$ is a constant). If all components are sought for, the estimator based on three \textit{IUR} lines should be preferred.

\section{Model based estimation} \label{SecModel}
In this section we derive estimators of the specific surface tensors associated with a stationary process of convex particles based on linear sections. In \cite{RSRS06}, Schneider and Schuster treat the similar problem of estimating the area moment tensor ($ s=2 $) associated with a stationary process of convex particles using planar sections.

Let $X$ be a stationary process of convex particles in $\R^n$ with locally finite (and non-zero) intensity measure, intensity  $\gamma >0$  and grain distribution $\Q$ on $\K_0:=\{K \in \K^n \mid c(K)=0\}$; see, e.g., \cite{Weil} for further information on this basic model of stochastic geometry. Here $c \colon \K^n \setminus \{\emptyset\} \rightarrow \R^n$ is the center of the circumball of $K$. Since $ X $ is a stationary process of convex particles, the intrinsic volumes $V_0, \dots, V_n$ are $ \Q $-integrable by \cite[Theorem 4.1.2]{Weil}. For $j \in \{0, \dots, n-1\}$ and $s \in \N_0$ the tensor valuation $\Phi_{j,0,s}$ is measurable and translation invariant on $\K^n$, and since, by \eqref{Mtensor},
\begin{equation*}
|\jT[s](e_{i_1}, \dots, e_{i_s})| \leq \frac{\omega_{n-j}}{s!\omega_{n-j+s}}V_{j}(K),
\end{equation*}
it is coordinate-wise $\Q$-integrable. The \textit{$j$th specific (translation invariant) tensor of rank s} can then be defined as
\begin{equation} \label{defmeansurface}
\joT[s]:=\gamma \int_{\K_0}\jT[s] \,\Q (dK)
\end{equation}
for $j \in \{0, \dots, n-1\}$ and $s \in \N_0$. For $j=n-1$, the specific tensors are called the specific surface tensors. Notice that $\oT[2]=\frac{1}{8 \pi} \overline{T}(X)$, where $\overline{T}(X)$ is the mean area moment tensor described in \cite{RSRS06}. By \cite[Theorem 4.1.3]{Weil} the specific tensors of $X$ can be represented  as
\begin{equation}\label{intuitive1}
\joT=\frac{1}{\lambda(B)} \; \BE \, \sum_{\mathclap{\substack{K \in X \\ c(K)\in B}}} \, \jT[s],
\end{equation}
where $B \in \B(\R^n)$ with $0 < \lambda(B) < \infty$. 

In the following we restrict to $j=n-1$ and discuss the estimation of $\oT[s]$ from linear sections of $X$. We assume from now on that $n\geq 2$. For $L \in \cLo$ we let $X \cap L :=\{K \cap L \mid K \in X, K \cap L \neq \emptyset\}$ be the stationary process of convex particles in $L$ induced by $X$. Let $\gamma_L$ and $\Q_L$ denote the intensity and the grain distribution of $ X \cap L $, respectively. The tensor valuation $\Phi_{0,0,s}^{(L)}$ is measurable and $\Q_L$-integrable on $ K_0^{(L)} :=\{K \in \K_0 \mid K \subseteq L\}$. We can thus define
\begin{equation*}
\oMT:=\gamma_L \int_{\K_0^{(L)}} \Phi_{0,0,s}^{(L)}(K) \, \Q_L (dK).
\end{equation*}
This deviates in the special case $\overline{T}^{(L)}(X \cap L) = 8 \pi \, \overline{\Phi}_{0,0,2}^{(L)}(X \cap L)$ from the definition in \cite{RSRS06} due to a misprint there.
An application of \eqref{Simpel form for T} yields,
\begin{equation}\label{simpel form, model}
\oMT= \frac{2}{s! \omega_{s+1}}Q(L)^{\frac{s}{2}} \gamma_L
\end{equation}
for even $s$, and $\oMT=0$ for odd $s$.

\begin{theorem} \label{modelthm}
Let $X$ be a stationary process of convex particles in $\R^n$ with positive intensity. If $s \in \N_0$ is even, then
\begin{equation}\label{model integral}
\int_{\cLo} \oMT \, \nu_1^n (dL)= \frac{2 \omega_{n+s+1}}{\pi s! \omega_{s+1}^2 \omega_n} \sum_{k=0}^{\frac{s}{2}}c_k^{(\frac{s}{2})}Q^{\frac{s}{2}-k}\,\oT[2k],
\end{equation}
where the constants $c_k^{(\frac{s}{2})}$ for $k=0, \dots, \frac{s}{2}$ are given in Theorem \ref{crofton-like}.
\end{theorem}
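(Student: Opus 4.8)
The plan is to deduce Theorem~\ref{modelthm} from the single-body Crofton formula of Theorem~\ref{crofton-like}, by first expressing the specific tensor $\overline{\Phi}^{(L)}_{0,0,s}(X\cap L)$ of the sectioned process in terms of the original process $X$ and then averaging over $L\in\cLo$. For a fixed line $L\in\cLo$, the section $X\cap L$ is again a stationary process of convex particles in $L$, so the intersection formula for sections of stationary particle processes (see, e.g., \cite{Weil}), applied coordinate-wise to the translation invariant, coordinate-wise $\Q_L$-integrable functional $\Phi^{(L)}_{0,0,s}$, gives
\[
\gamma_L\int_{\K_0^{(L)}}\Phi^{(L)}_{0,0,s}(M)\,\Q_L(dM)
=\gamma\int_{\K_0}\int_{\Lp}\Phi^{(L)}_{0,0,s}\big((K+x)\cap L\big)\,\lambda_{\Lp}(dx)\,\Q(dK).
\]
Integrating both sides with respect to $\nu_1^n$ and interchanging the $\nu_1^n$- and $\Q$-integrations — legitimate since, by \eqref{Simpel form for T}, the integrand is a bounded multiple (uniformly in $L$) of $V_0((K+x)\cap L)$, so the inner $\lambda_{\Lp}$-integral is dominated by a constant times $V_{n-1}(K\mid\Lp)\le V_{n-1}(K)$, which is $\Q$-integrable — brings the $\Q$-integral to the outside, leaving, for each fixed $K\in\K_0$, the inner integral $\int_{\cLo}\int_{\Lp}\Phi^{(L)}_{0,0,s}\big((K+x)\cap L\big)\,\lambda_{\Lp}(dx)\,\nu_1^n(dL)$ to be evaluated.

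Next I would identify this inner integral with an affine linear Crofton integral for $K$. By \eqref{Simpel form for T}, $\Phi^{(E)}_{0,0,s}(K\cap E)$ depends on the line $E$ only through its direction $\pi(E)$, whence $\Phi^{(L)}_{0,0,s}((K+x)\cap L)=\Phi^{(L-x)}_{0,0,s}(K\cap(L-x))$ for $x\in\Lp$. Substituting $x\mapsto-x$ and using the disintegration $\int_{\En_1}h\,d\mu_1^n=\int_{\cLo}\int_{\Lp}h(L+x)\,\lambda_{\Lp}(dx)\,\nu_1^n(dL)$, which under the chosen normalization of $\mu_1^n$ holds with constant $1$ (as already used in \eqref{Alternativ}), the inner integral equals $\int_{\En_1}\Phi^{(E)}_{0,0,s}(K\cap E)\,\mu_1^n(dE)$. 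By Theorem~\ref{crofton-like} this is $\frac{2\omega_{n+s+1}}{\pi s!\omega_{s+1}^2\omega_n}\sum_{k=0}^{s/2}c_k^{(s/2)}Q^{s/2-k}\Phi_{n-1,0,2k}(K)$. Inserting this back, pulling the deterministic constants, the powers of $Q$ and the finite sum out of the $\Q$-integral, and recognising $\gamma\int_{\K_0}\Phi_{n-1,0,2k}(K)\,\Q(dK)=\overline{\Phi}_{n-1,0,2k}(X)$ by \eqref{defmeansurface}, yields exactly \eqref{model integral}.

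I expect the only genuinely delicate point to be the first step: pinning down the exact intersection formula for the section of a stationary particle process by a linear subspace — in particular the appearance of the $(n-1)$-dimensional Lebesgue measure $\lambda_{\Lp}$ (forced by disintegrating the intensity measure of $X$ along $L\oplus\Lp$) and the constant $\gamma$ — together with the integrability bookkeeping for the Fubini-type interchange. Everything after that is formal, amounting to the reduction of the inner integral to $\int_{\En_1}\Phi^{(E)}_{0,0,s}(K\cap E)\,\mu_1^n(dE)$ via translation invariance and a direct appeal to Theorem~\ref{crofton-like}. Note that only even $s$ occurs in the statement, so the odd-$s$ vanishing is not needed here (it would in any case be immediate from $\Phi^{(L)}_{0,0,s}\equiv0$ for odd $s$, cf.\ \eqref{simpel form, model}).
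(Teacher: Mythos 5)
Your proposal is correct and follows essentially the same route as the paper: express $\oMT$ as an integral over $\K_0\times L^\perp$ against $\Q\otimes\lambda_{\Lp}$, reassemble the $\cLo\times\Lp$ integration into $\int_{\En_1}\cdots\mu_1^n(dE)$ via the standard disintegration of $\mu_1^n$, and invoke Theorem~\ref{crofton-like}. The only (cosmetic) difference is at the first step: you cite the general intersection formula for translation-invariant functionals of a sectioned stationary particle process, whereas the paper derives the needed identity more economically from Campbell's theorem applied to the Euler characteristic alone and then multiplies through by the deterministic factor $\frac{2}{s!\omega_{s+1}}Q(L)^{s/2}$ using \eqref{simpel form, model} — both yield the same equation, so the arguments are in substance identical.
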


\begin{proof}
Let $L \in \cLo$, and let $\gamma_L$ be the intensity of the stationary process $X \cap L$. If $B \subseteq L$ is a Borel set with $\lambda_L(B)=1$, then an application of Campbell's theorem and Fubini's theorem yields
\begin{align*}
\gamma_L&=
\BE \; \sum_{\mathclap{\substack{K \in X \\ K \cap L \neq \emptyset}}} \; \1(c(K \cap L) \in B)
\\
&=\gamma \int_{\K_0} \int_{\Lp} V_0(K \cap (L + x)) \, \lambda_{\Lp}(dx) \, \Q(dK),
\end{align*}
where $\gamma$ and $\Q$ are the intensity and the grain distribution of $X$.
Then, \eqref{simpel form, model} implies that
\begin{equation*}
\oMT=\gamma \int_{\K_0} \int_{\Lp} \Phi_{0,0,s}^{(L+z)}(K \cap (L+z)) \, \lambda_{\Lp}(dz) \,\Q(dK),
\end{equation*}
and by Fubini's theorem we get
\begin{align}
\int_{\cLo} \oMT \, \nu^n_1(dL)
=\gamma \int_{\K_0} \int_{\En_1} \Phi^{(E)}_{0,0,s}(K \cap E) \,\mu_1^n(dE)\, \Q(dK). \label{Eq}
\end{align}
Now Theorem \ref{crofton-like} yields the stated integral formula \eqref{model integral}.
\end{proof}

A combination of equation \eqref{Eq} and equation \eqref{reffinal} immediately gives the following Theorem \ref{model2}, which suggests an estimation procedure of the specific surface tensor $\oT$ of the stationary particle process $X$.

\begin{theorem}\label{model2}
Let $X$ be a stationary process of convex particles in $\R^n$ with positive intensity. If $s \in \N_0$ is even, then
\begin{equation}\label{inversmodel}
\int_{\cLo} \sum_{j=0}^{\frac{s}{2}} d_{\frac{s}{2}\,j}C_{2j} Q^{\frac{s}{2}-j} \oMT[2j] \, \nu^n_1(dL)= \oT,
\end{equation}
where $d_{\frac{s}{2}\,j}$ and $ C_{2j} $ for $ j=0, \dots, \frac{s}{2} $ are given before Theorem \ref{crofton2}.
\end{theorem}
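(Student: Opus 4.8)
The plan is to derive Theorem \ref{model2} directly from the two ingredients mentioned just before its statement: the Fubini identity \eqref{Eq} established in the proof of Theorem \ref{modelthm}, and the pointwise inversion formula \eqref{reffinal} for a single convex body. In effect this is the model-based analogue of the passage from Theorem \ref{crofton-like} to Theorem \ref{crofton2}, with the body $K$ replaced by the particle process $X$ and the Crofton integral $\integral[2j]$ replaced by the rotational average of the specific sectional tensors.

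First I would start from the definition \eqref{defmeansurface} with $j=n-1$, namely $\oT = \gamma\int_{\K_0}\T[s]\,\Q(dK)$, which is legitimate because $\Phi_{n-1,0,s}$ is coordinate-wise $\Q$-integrable (the bound recorded before \eqref{defmeansurface} together with \cite[Theorem 4.1.2]{Weil}). Into the integrand I substitute the identity \eqref{reffinal}, which holds for every single $K\in\K_0$, obtaining $\oT = \gamma\int_{\K_0}\sum_{j=0}^{s/2} d_{\frac{s}{2}j}Q^{\frac{s}{2}-j}C_{2j}\integral[2j]\,\Q(dK)$. Since the sum has only finitely many terms and each summand $K\mapsto\integral[2j]$ is coordinate-wise $\Q$-integrable — this is exactly the integrability checked in the proof of Theorem \ref{modelthm} via Campbell's and Fubini's theorems — I may interchange the finite sum with the $\Q$-integral, which gives $\oT = \sum_{j=0}^{s/2} d_{\frac{s}{2}j}Q^{\frac{s}{2}-j}C_{2j}\,\gamma\int_{\K_0}\integral[2j]\,\Q(dK)$.

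Next I apply equation \eqref{Eq} from the proof of Theorem \ref{modelthm}, but with $s$ replaced by the even number $2j$: it states precisely that $\gamma\int_{\K_0}\integral[2j]\,\Q(dK) = \int_{\cLo}\oMT[2j]\,\nu^n_1(dL)$. Substituting this and moving the (finite) sum back inside the integral over $\cLo$ yields $\oT = \int_{\cLo}\sum_{j=0}^{s/2} d_{\frac{s}{2}j}C_{2j}Q^{\frac{s}{2}-j}\oMT[2j]\,\nu^n_1(dL)$, which is \eqref{inversmodel}. As a cross-check, the same conclusion follows by working entirely at the level of Theorem \ref{modelthm}: writing \eqref{model integral} for $s$ replaced by $2k$, $k=0,\dots,s/2$, and rescaling the $k$-th equation by $C_{2k}$, one obtains exactly the triangular linear system with coefficient matrix $C$ from before Theorem \ref{crofton2}, now with unknowns $\oT[2j]$ and right-hand sides $\int_{\cLo}\oMT[2j]\,\nu^n_1(dL)$; inverting with the matrix $D$ of \eqref{invers matrix} reproduces \eqref{inversmodel}.

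The algebra involved is literally the triangular inversion already carried out for Theorem \ref{crofton2}, so it is not the bottleneck. The only point that genuinely requires care is the legitimacy of the two interchanges of integration — applying \eqref{reffinal} underneath the $\Q$-integral and identifying $\gamma\int_{\K_0}\integral[2j]\,\Q(dK)$ with the rotational average $\int_{\cLo}\oMT[2j]\,\nu^n_1(dL)$. Both are covered by the measurability and integrability facts assembled in the proof of Theorem \ref{modelthm} (Campbell's theorem, Fubini's theorem, and the $\Q$-integrability of the intrinsic volumes via \cite[Theorem 4.1.2]{Weil}), so no new estimate is needed and the argument is very short.
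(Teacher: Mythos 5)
Your argument is correct and matches the paper's intended proof exactly: the paper dispatches Theorem \ref{model2} with the one-line remark that "a combination of equation \eqref{Eq} and equation \eqref{reffinal} immediately gives" the result, and your write-up simply unpacks that combination (substituting \eqref{reffinal} under the $\Q$-integral, interchanging the finite sum, and applying \eqref{Eq} with $s$ replaced by $2j$), with the integrability justified as you indicate. The alternative triangular-system cross-check you mention is the same inversion already carried out before Theorem \ref{crofton2}, so it adds no new content but is a harmless sanity check.
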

Using \eqref{simpel form, model}, we can reformulate the integral formula \eqref{inversmodel} in the form
\begin{equation*}
\int_{\cLo} G_s(L) \gamma_L \, \nu^n_1(dL)= \oT,
\end{equation*}
where $G_s$ is given in Theorem \ref{crofton2}.

\begin{example}\rm 
In the case where $ s=2 $ formula \eqref{inversmodel} becomes
\begin{equation*}
\int_{\cL_1^n}  \frac{2 \pi^2\omega_n}{\omega_{n+3}}\oMT[2]-\frac{\omega_n}{4 \omega_{n+1}}Q \oMT[0] \, \nu_1^n(dL)=\oT[2].
\end{equation*}
Up to a normalizing factor $ 2 \pi $ in the constant in front of $ \overline{\Phi}_{0,0,2}^{(L)} $, this formula coincides with formula (7) in \cite{RSRS06}, when $ n=2 $. Apparently the normalizing factor got lost, when Schneider and Schuster used \cite[(36)]{TVCB}, which is based on the spherical Lebesgue measure. In \cite{RSRS06}, Schneider and Schuster use the \emph{normalized} spherical Lebesgue measure.
\end{example}

\noindent \textbf{Acknowledgements} The authors acknowledge support by the German research foundation (DFG) through the research group ``Geometry and Physics of Spatial Random Systems'' under grants HU1874/2-1, HU1874/2-2 and by the Centre for Stochastic Geometry and Advanced Bioimaging, funded by a grant from The Villum Foundation.

\bibliography{litteratur}

\end{document}